\documentclass[11pt,reqno]{amsart}
\usepackage{amssymb,amsmath,amsthm,amsfonts}

\parindent=0cm

\makeatletter
\@addtoreset{equation}{section}

\usepackage{amsfonts}
\usepackage{xr}
\usepackage{graphicx}
\usepackage{amsmath}
\usepackage{epsfig}

\def\d{{\partial}}
\def\CC{{\mathbb C}}
\def\C{{\mathbb C}}

\def\ZZ{{\mathbb Z}}

\def\DD{{\mathbb D}}
\def\D{{\mathbb D}}

\def\NN{{\mathbb N}}

\def\RR{{\mathbb R}}
\def\R{{\RR}}

\def\TT{{\mathbb T}}
\def\T0{{\mathbb T}_{x_0}}
\def\T{{\mathbb T}}

\def\trait (#1) (#2) (#3){\vrule width #1pt height #2pt depth #3pt}
\def\fin{\hfill\trait (0.1) (5) (0) \trait (5) (0.1) (0) \kern-5pt 
\trait (5) (5) (-4.9) \trait (0.1) (5) (0)}
\def\tr{\mbox{tr}}

%
%


\newtheorem{prop}{Proposition}[section]
\newtheorem{thm}[prop]{Theorem}
\newtheorem{lem}[prop]{Lemma}
\newtheorem{cor}[prop]{Corollary}
\newtheorem{rem}[prop]{Remark}

\begin{document}
\title[\sf Pseudo-holomorphic functions at the critical exponent]{Pseudo-holomorphic functions at the critical exponent}
\author[Baratchart, Borichev, Chaabi]{Laurent Baratchart, Alexander Borichev, Slah Chaabi}
\address{L. Baratchart\\ INRIA Sophia-Antipolis, 
BP 93, 06902 Sophia-Antipolis Cedex, France}
\email{Laurent.Baratchart@inria.fr}
\address{A. Borichev\\ LATP, Aix-Marseille Universit\'e\\
39, rue F. Joliot-Curie, 13453 Marseille, France}
\email{borichev@cmi.univ-mrs.fr}
\address{S. Chaabi \\INRIA Sophia-Antipolis, 
BP 93, 06902 Sophia-Antipolis Cedex, France}
\email{Slah.Chaabi@inria.fr}
\date{\today}
\keywords{}
\subjclass[2000]{
}
\thanks{}
\begin{abstract} {We study Hardy classes on the 
disk associated to the equation $\bar\d w=\alpha\bar w$ for $\alpha\in L^r$ with 
$2\leq r<\infty$. The paper seems to be the first to deal with the case $r=2$.
We prove an analog of the M.~Riesz theorem and a 
topological converse to the Bers similarity principle. 
Using the connection between
pseudo-holomorphic functions and conjugate Beltrami equations, we
deduce well-posedness on smooth domains of the
Dirichlet problem with weighted $L^p$ boundary data for 2-D isotropic
conductivity equations whose coefficients have logarithm in $W^{1,2}$.
In particular these are not strictly elliptic.
Our  results depend on a
new multiplier theorem for $W^{1,2}_0$-functions.}
\end{abstract}
\maketitle

\section{Introduction} 

\label{sec:pb}
Pseudo-holomorphic functions of one complex variable, 
{\it i.e.} solutions to a $\bar \d$ equation 
whose right-hand side is a real linear function of the unknown variable,  are
perhaps the simplest generalization  of holomorphic functions.
They received early attention in \cite{Theodorescu,Carleman}
and extensive treatment in \cite{Bers,vekua}
when the coefficients are $L^r$-summable,
$r>2$  While \cite{Bers}
takes on a  function-theoretic viewpoint, \cite{vekua} dwells on
integral equations and leans on applications to geometry, 
elasticity and hydrodynamics. 
Recent developments and applications 
to various boundary value problems can be found in \cite{krav,Wen,EfWend}.
Hardy classes for such functions were introduced in
\cite{Musaev1} and subsequently considered in
\cite{Klimentov1,Klimentov2,klimentov2006,BLRR} in the range of exponents $1<p<\infty$, see 
\cite{EfendievRuss,klimentov2011, TheseYannick,BFL} for 
further generalizations
to multiply connected domains. The connection between pseudo-holomorphic functions and conjugate Beltrami equations makes 
pseudo-holomorphic Hardy classes a convenient framework to solve
Dirichlet problems with $L^p$ boundary data for isotropic
conductivity equations \cite{BLRR,EfendievRuss,BFL}.  These are also
instrumental in \cite{fl,flps,FMP, TheseYannick} to approach certain inverse 
boundary  problems. 

As reported in \cite{Bliev}, I. N. Vekua stressed on several 
occasions an interest in developing the theory for $L^r$ coefficients 
when  $1<r\le 2$. However, 
solutions then need no longer be continuous which has apparently been an 
obstacle
to such extensions, see \cite{Bliev,Otelbaev} for classes of coefficients 
that ensure such continuity.
The present paper seems to be the first to deal with the critical exponent 
$r=2$. 
We develop a theory of pseudo-holomorphic Hardy spaces on the disk
in the range $1<p<\infty$, prove  
existence of $L^p$ boundary values,  and give 
an analog of the M. Riesz theorem in this context. 
As a byproduct, we obtain a Liouville-type theorem.
We also develop a 
topological parametrization by holomorphic Hardy functions which is new even 
for  $r>2$. We apply our result to 
well-posedness of the Dirichlet problem
with weighted $L^p$ boundary data for 2-D conductivity equations whose
coefficients have logarithm in $W^{1,2}$. In particular these are not bounded 
away from zero nor infinity and no strict ellipticity prevails, 
which makes for results of a novel type. 
Accordingly, solutions may be
locally unbounded.  

As in previous work on pseudo-holomorphic functions, we make extensive use of 
the Bers similarity principle, but in our case it requires a thorough analysis of
smoothness and boundedness properties of exponentials of $W^{1,2}$ 
functions which is carried 
out in a separate appendix. There we prove a theorem,
one of the main technical results of the paper, asserting that the exponential of a $W^{1,2}_0$ 
function in the disk is a multiplier from the space of functions with $L^p$ 
maximal function on the unit circle to the space of functions satisfying a Hardy 
condition of order $p$ on the unit disk. This would have higher dimensional analogs, but we 
make no attempt at developing them and stick to dimension 2 throughout the 
paper.

In Section~\ref{sec:notations_generales} we introduce main notations and discuss numerous facts on Sobolev spaces we use later on. 
In Section~\ref{x18} we formulate the classical similarity principle (factorization) for pseudo-holomorphic functions.
A converse statement is given in Section~\ref{x19}. Section~\ref{HSD} is devoted to pseudo-holomorphic Hardy spaces; we give there 
a topological converse to the similarity principle. In Section~\ref{x20} we obtain a generalization of the M. Riesz theorem on the conjugate operator. 
Section~\ref{sectDirichlet} contains an application of our results to the conductivity equation with exp-Sobolev coefficients. Finally, several technical results 
and a multiplier theorem are contained in the appendix, Section~\ref{app}.

\section{Notations and preliminaries}
\label{sec:notations_generales}

Let $\C\sim\R^2$ be the complex plane and 
$\overline{\C}:=\C\cup\{\infty\}$.
We designate by $\T_{\xi,\rho}$ and $\D_{\xi,\rho}$ respectively the circle 
and the open  disk centered at $\xi$ of radius $\rho$. We simply write
$\T_\rho$, $\D_\rho$ when $\xi=0$, and 
if $\rho=1$ we omit the subscript. 
If $f$ is a function on 
$\D_\rho$, we often denote by 
$f_\rho$ the function on $\D$ defined by $f_\rho(\xi):=f(\rho\xi)$. 
Given $\xi\in\T$ and $\gamma\in(0,\pi/2)$, 
we let $\tilde\Gamma_{\xi,\gamma}$ indicate the open cone 
with vertex $\xi$ and opening $2\gamma$, symmetric with respect 
to the line $(0,\xi)$. 
We define $\Gamma_{\xi,\gamma}=A_{\xi,\gamma}\cup\bar\D_{\sin\gamma}$, where $A_{\xi,\gamma}$ is the bounded component of 
$\tilde\Gamma_{\xi,\gamma}\setminus\bar\D_{\sin\gamma}$.

A complex-valued function $f$ on $\D$ has  non-tangential limit
$\ell$  at $\xi$ if
$f(z)$ tends to $\ell$ as
$z\to \xi$ inside $\Gamma_{\xi,\gamma}$ for every $\gamma$.
The non-tangential maximal function
of $f$ (with opening $2\gamma$) is the real-valued map  ${\mathcal M}_\gamma f$
on
$\T$ given by
\begin{equation}
\label{defntmax}
{\mathcal M}_{\gamma}f(\xi):=\sup_{z\in \D\cap\Gamma_{\xi,\gamma}}
|f(z)|,\qquad \xi\in\T.
\end{equation}
For
$E\subset\C$ and $f$ a function on a set containing  $E$, we let 
$f_{|E}$ indicate the restriction of $f$ to $E$. 
We put $|E|$ for the planar Lebesgue measure of $E$, 
as no confusion can arise with 
complex modulus.
The differential of that measure is denoted 
interchangeably by
$$
dm(z) = dx\,dy = (i/2) \ dz \wedge d \overline{z},\qquad z=x+iy.
$$

\medskip
When $\Omega\subset\C$ is an open set, we denote by ${\mathcal D}(\Omega)$ the space of 
$C^\infty$-smooth
complex-valued 
functions with compact support in $\Omega$, equipped with the usual
topology\footnote{{\it i.e.} the inductive topology of subspaces  
$\mathcal{D}_K$ consisting of functions supported by the compact set 
$K$, each $\mathcal{D}_K$ being topologized by uniform convergence of 
all derivatives \cite[Section I.2]{Schwartz}.}.
Its dual $\mathcal{D}'(\Omega)$ 
is the space of distributions on $\Omega$. 
For $p\in[1,\infty]$,  
we let $L^p(\Omega)$ and $W^{1,p}(\Omega)$ 
be the usual Lebesgue and Sobolev spaces  with respect to $dm$;
we sometimes consider their subspaces of real-valued functions $L^p_\RR(\Omega)$ and $W_\RR^{1,p}(\Omega)$.
The space $W^{1,p}(\Omega)$ consists of functions in
$L^p(\Omega)$ whose first distributional derivatives lie in
$L^p(\Omega)$, with the norm:
$$
 \|f\|_{W^{1,p}(\Omega)} = \|f\|_{L^p(\Omega)}+\|\partial f\|_{L^p(\Omega)}+\|\overline{\partial} f\|_{L^p(\Omega)}.
$$
Here $\partial$ and $\overline{\partial}$ stand for the usual
complex derivatives:
$$
 \partial f := \partial_z f = \dfrac{1}{2}(\partial_x - i \partial_y)f~~~\mbox{and}~~~\overline{\partial} f := \partial_{\overline{z}} f = \dfrac{1}{2}(\partial_x + i \partial_y)f, \qquad z=x+iy.
$$
Setting $\nabla f:=(\partial_x f,\partial_y f)$  to mean the
($\C^2$-valued) gradient of $f$, observe that the pointwise relation 
$\|\nabla f \|_{\C^2}^2=2|\partial f|^2_2+2|\bar \partial f|_2^2$ 
holds. 
Note also the identities $\overline{\partial f}=\overline{\partial} 
\,\,\overline{f}$ and $\Delta=4\d\bar\d$, where $\Delta$ is the 
Euclidean Laplacian.  By Weyl's lemma \cite[Theorem 24.9]{Forster},
the distributions $u\in \mathcal{D}'(\Omega)$ such 
that $\Delta u=0$ are exactly the harmonic functions on $\Omega$.
Subsequently, the distributions $\psi\in \mathcal{D}'(\Omega)$ such 
that $\bar\d\psi=0$ are exactly the holomorphic functions on $\Omega$.
The space $\mathcal{D}(\R^2)$ is dense in $W^{1,p}(\R^2)$ for
$p\in[1,\infty)$, and in general 
we let $W^{1,p}_0(\Omega)$ indicate
the closure of $\mathcal{D}(\Omega)$ in $W^{1,p}(\Omega)$.
The space $W^{1,\infty}(\Omega)$ identifies with 
Lipschitz-continuous functions on $\Omega$
\cite[Section V.6.2]{stein}.

We also introduce the spaces $L^p_{loc}(\Omega)$ and $W^{1,p}_{loc}(\Omega)$ 
of distributions whose restriction to any relatively compact open subset 
$\Omega_0\subset \Omega$ lies in $L^p(\Omega_0)$ and 
$W^{1,p}(\Omega_0)$ respectively. 
They are topologized by the family of 
semi-norms $\|f_{\Omega_n}\|_{L^p(\Omega_n)}$ and
$\|f_{\Omega_n}\|_{W^{1,p}(\Omega_n)}$,  where $\{\Omega_n\}$ is
a sequence of relatively compact open subsets exhausting $\Omega$. 

Below we indicate some properties of Sobolev functions, most of them  
standard. They are
valid on bounded  Lipschitz domains 
({\it i.e.} domains $\Omega$  whose boundary 
$\partial\Omega$ is locally isometric to the graph of a Lipschitz function).

\begin{itemize}
\item
For $1\leq p\leq\infty$, every $f\in W^{1,p}(\Omega)$ is the restriction 
to $\Omega$ of some $\tilde{f}\in W^{1,p}(\R^2)$. In fact, there is a 
continuous  linear map 
\begin{equation}
\label{x22}
E: W^{1,p}(\Omega)\to W^{1,p}(\R^2) \text{\ such that\ } (Ef)_{|\Omega}=f
\end{equation}
(the extension theorem  \cite[Proposition 2.70]{Demengel}).
When $\Omega=\D_\rho$,
we may simply put $(Ef)_{|\C\setminus\D_\rho}(z)=\varphi(z) f(\rho^2/\bar z)$,  
where
$\varphi\in\mathcal{D}(\R^2)$ and $\varphi_{|\D_\rho}\equiv1$.
The extension theorem entails that
smooth functions on $\overline{\Omega}$ are dense in $W^{1,p}(\Omega)$ when
$1\leq p<\infty$. 

\item For $p>2$, $W^{1,p}(\Omega)$ embeds continuously
in the space of  H\"older-smooth functions  with 
exponent  $1-2/p$ on $\Omega$, 
in particular functions in $W^{1,p}(\Omega)$ 
extend continuously to
$\overline{\Omega}$, and $W^{1,p}(\Omega)$ is an algebra where multiplication is continuous and
derivatives 
can be computed by the chain rule. For $1\le p<2$ the embedding is in
$L^{p^*}(\Omega)$ with $p^*=2p/(2-p)$, while $W^{1,2}(\Omega)$ is
embedded in all $L^\ell(\Omega)$, $\ell\in[1,\infty)$ 
(the Sobolev embedding theorem \cite[Theorems 4.12,4.39]{Adams}). 

\item
For $p\leq2$ the embedding $W^{1,p}(\Omega)\to L^\ell(\Omega)$ is compact when 
$\ell\in[1,p^*)$ (the Rellich--Kondrachov theorem \cite[Theorem 6.3]{Adams}); $p^*=\infty$ for $p=2$.

\item  If $g\in \mathcal{D}'(\Omega)$
has derivatives in $L^p(\Omega)$  for some $p\in[1,\infty)$, then
$g\in W^{1,p}(\Omega)$ \cite[Theorem 6.74]{Demengel}\footnote{The proof given 
there for bounded $C^1$-smooth $\Omega$ carries over to 
the Lipschitz case.}. 
Moreover, there exists    
$C=C(\Omega,p)$ such that 
\begin{equation}
\label{estfonc}
\left\|g-g_\Omega\right\|_{L^p(\Omega)}\leq C\bigl(\|\partial g\|_{L^p(\Omega)}+
\|\bar \partial g\|_{L^p(\Omega)}\bigr),\,\,\,\mbox{with}\,\,\, g_\Omega:=\frac{1}{|\Omega|}
\int_\Omega g\,dm
\end{equation}
(the Poincar\'e inequality  \cite[Theorem 4.2.1]{ziemer}).
Let $C_1=C_1(p)$ be a number for which
\eqref{estfonc} holds for $\Omega=\D$;  
it is easily seen by homogeneity that if $\xi\in\C$, $\rho>0$, and
$g\in W^{1,p}(\D_{\xi,\rho})$, then
\begin{multline}
\label{homQprim}
\Bigl(\frac{1}{|\D_{\xi,\rho}|}\int_{\D_{\xi,\rho}}|g-g_{\D_{\xi,\rho}}|^p\,dm
\Bigr)^{1/p}
\\ \le C_1\rho^{1-2/p}\bigl(\|\partial g\|_{L^p(\D_{\xi,\rho})}+
|\bar\partial g\|_{L^p(\D_{\xi,\rho})}).
\end{multline}
In particular, if $p=2$ and $\partial g, \bar\partial g\in L^2(\Omega)$, 
then the right hand side of \eqref{homQprim} is bounded and
arbitrarily small as $\rho\to0$, thereby asserting that
$g$ lies in $VMO(\Omega)$, the space of functions with vanishing mean 
oscillation on $\Omega$ \cite{BrN1}.

\item $W^{1,p}(\Omega)$-functions need not be continuous nor even 
locally bounded when $p\le 2$; however, if $p>1$, their
non-Lebesgue points form a 
set of Bessel $B_{1,p}$-capacity zero \cite[Theorem 3.10.2]{ziemer}.
Such sets are very thin: not only do they have measure zero but also 
their Hausdorff 
$H^{2-p+\varepsilon}$-dimension is zero for each $\varepsilon>0$
\cite[Theorem 2.6.16]{ziemer}.  
When speaking of pointwise values of $f\in W^{1,p}(\Omega)$,
we pick a representative such that
$f(z)=\lim_{\varepsilon\to0}f_{\D_{z,\varepsilon}}$ outside a set of
 $B_{1,p}$-capacity zero. At such a $z$, $f$ is said to be 
\emph{strictly defined}.

\item If $L^\lambda(\partial\Omega)$ is understood with respect 
to arclength, then $W^{1,\lambda}(\partial\Omega)$ 
is naturally defined using 
local coordinates  
since any Lipschitz-conti\-nuous change of variable preserves Sobolev classes
\cite[Theorem 2.2.2]{ziemer}.
Each 
$f \in W^{1,p}(\Omega)$ with $1<p\leq\infty$  has a trace on 
$\partial \Omega$ (denoted again by $f$ or sometimes
by $\tr_{\partial \Omega} \, f$ for emphasis), which lies  in the Sobolev 
space
$W^{1-1/p,p}(\partial\Omega)$ of non-integral order\footnote{We leave out the 
case $p=1$ where the trace is merely defined in $L^1(\partial\Omega)$.
The space $W^{1-1/p,p}(\partial\Omega)$  coincides with
the Besov space $B^{1-1/p,p}_p(\partial\Omega)$, but we need not introduce 
Besov spaces here.}.  
The latter is a real interpolation space
between $L^p(\partial\Omega)$ and $W^{1,p}(\partial\Omega)$,
with the norm given by \cite[Theorem 7.47]{Adams}:
\begin{equation}
\label{defWt}
\|g\|_{W^{1-1/p,p}(\partial\Omega)}=\|g\|_{L^p(\partial\Omega)}+
\Bigl(\int_{\partial\Omega\times\partial\Omega}
\!\frac{|g(t)-g(t')|^p}{\bigl(\Lambda(t,t')\bigr)^p}\,d\Lambda(t)d\Lambda(t')
\Bigr)^{1/p},
\end{equation}
where $\Lambda(t,t')$ indicates the length of the arc $(t,t')$ on $\partial\Omega$.
Note that $|t-t'|\sim\Lambda(t,t')$ since
$\partial\Omega$ is Lipschitz. The trace operator defines a continuous
surjection from
$W^{1,p}(\Omega)$ onto  $W^{1-1/p,p}(\partial\Omega)$ 
\cite[Theorem 1.5.1.3]{gris}. 
The pointwise definition of $\tr_{\partial\Omega} f$  $\Lambda$-{\it a.e.}
is based on the extension theorem 
and the fact that non-Lebesgue points of $E f$ (see \eqref{x22}) have  Hausdorff $H^1$-measure 
zero \cite[Remark 4.4.5]{ziemer}. Of course 
$\tr_{\partial \Omega} \ f$ coincides with the restriction 
$f_{|\partial\Omega}$ whenever $f$ is smooth on $\overline{\Omega}$.
The subspace of functions with zero trace is none but
$W^{1,p}_0(\Omega)$.

Since the integral in the right hand side of \eqref{defWt} does not change if
we add a constant to $g$, it follows from \eqref{estfonc} by
the continuity of  the trace operator that
\begin{equation}
\label{majvarder}
\Bigl(\int_{\partial\Omega\times\partial\Omega}
\frac{|g(t)-g(t')|^p}{\bigl(\Lambda(t,t')\bigr)^p}\,d\Lambda(t)d\Lambda(t')
\Bigr)^{1/p}\le  C\Bigl
(\|\partial g\|_{L^p(\Omega)}+
\|\bar \partial g\|_{L^p(\Omega)}\Bigr),
\end{equation}
where the constant $C$ depends on $\Omega$ and $p$.

A variant of the Poincar\'e inequality involving the trace is as follows: whenever
$E\subset\partial\Omega$ has arclength $\Lambda(E)>0$,
there is 
$C>0$ depending only on $p$, $\Omega$ and $E$ such that
\begin{equation}
\label{estfoncb}
\Bigl\|g-\int_E\tr_{\partial\Omega} \,g\Bigr\|_{L^p(\Omega)}\le C\bigl
(\|\partial g\|_{L^p(\Omega)}+\|\bar \partial g\|_{L^p(\Omega)}\bigr).
\end{equation}
This follows immediately from the continuity of the trace operator, the 
Rellich--Kondrachov theorem,  and \cite[Lemma 4.1.3]{ziemer}.

\item For $p\in(1,\infty)$ 
the trace operator has a continuous section \cite[Theorem 1.5.1.3]{gris},
that is, for each 
$\psi\in W^{1-1/p,p}(\partial\Omega)$,
there is $g\in W^{1,p}(\Omega)$ such that
\begin{equation}
\label{bornsec}
\|g\|_{W^{1,p}(\Omega)}\leq C\|\psi\|_{W^{1-1/p,p}(\partial\Omega)},
\qquad \tr_{\partial\Omega} \,g=\psi,
\end{equation}
with $C=C(\Omega, p)$. 
If we assume that $\Omega$ is $C^1$-smooth
and not just Lipschitz,
then the function $g$ in \eqref{bornsec} can be chosen to be 
harmonic in $\Omega$
(elliptic regularity theory 
\cite[p.165 \& Theorem 1.3]{JK})\footnote{In fact, elliptic regularity holds
for $1<p<\infty$ 
as soon as $\partial\Omega$ is locally the graph of a function
with VMO derivative \cite[Theorem 1.1]{MMS}.  If $\partial \Omega$ is only Lipschitz-smooth, then 
the range of $p$ has to be 
restricted in a manner that depends on the Lipschitz constant,
see \cite{JK,MMS}.}.

\item The non-integral version of the Sobolev embedding theorem
\cite[Theorem 7.34]{Adams} asserts that $W^{1-1/\beta,\beta}(\partial\Omega)$
embeds continuously in \newline\noindent$L^{ \beta/(2 - \beta)}(\partial\Omega)$ if
$1<\beta<2$, while $W^{1/2,2}(\partial\Omega)$ embeds in 
$L^\ell(\partial\Omega)$ for  all $\ell \in [1, \infty)$. 
The corresponding generalization
of the
Rellich--Kondrachov theorem \cite[Theorem 4.54]{Demengel} is as follows:
if $1<\beta\leq 2$, then
$W^{1-1/\beta,\beta}(\partial\Omega)$ embeds compactly in 
$L^\ell(\partial\Omega)$ for $\ell<{\beta/(2 - \beta)}$.
 
\item When $p\in(2,\infty)$, the nonlinear map
$f\mapsto e^f$ is bounded and continuous from $W^{1,p}(\Omega)$ into itself: 
this follows from the Taylor expansion of $\exp$ because 
$W^{1,p}(\Omega)$ is an algebra.
When $p=2$ this property no longer holds, but still $f\mapsto e^f$ 
is continuous and bounded from $W^{1,2}(\Omega)$ into $W^{1,q}(\Omega)$ for 
each $q\in[1,2)$; in particular 
$\tr_{\partial\Omega} e^f=e^{\tr_{\partial\Omega} f}$ 
exists in $W^{1-1/q,q}(\partial\Omega)$ for
$1<q<2$.
This is the content of Proposition~\ref{embexpS} that we could 
not locate in the literature.
\item
We use at some point the Sobolev space $W^{2,p}(\Omega)$ of functions in
$L^p(\Omega)$ whose first distributional derivatives lie in
$W^{1,p}(\Omega)$, equipped with the norm:
$$
\|f\|_{W^{2,p}(\Omega)} = \|f\|_{L^p(\Omega)}+
\|\partial f\|_{W^{1,p}(\Omega)}+\|\overline{\partial} f\|_{W^{1,p}(\Omega)}.
$$
When $p\leq2$, the Rellich--Kondrachov theorem implies 
that  $W^{2,p}(\Omega)$ is compactly embedded in $W^{1,\ell}(\Omega)$
for $\ell\in[1,p^*)$.
\end{itemize}

Given a bounded domain $\Omega$ and $h\in L^p(\Omega)$, $1<p<\infty$, 
let  $\tilde{h}$ denote  the extension of $h$ by $0$ off $\Omega$.
The Cauchy integral operator applied 
to $\tilde{h}$
defines a function  $\mathcal{C}(h)\in W^{1,p}_{loc}(\R^2)$ given by
\begin{equation}
\label{defCauchy}
\mathcal{C}(h)(z)=\frac{1}{\pi}\int_{\Omega}\frac{h(t)}{z-t}dm(t)=
\frac{1}{2\pi i}\int_{\Omega}\frac{h(\xi)}{\xi-z}d\xi\wedge d\xi,
\qquad z\in\C.
\end{equation}
Indeed, $\mathcal{C}(h)$ lies in $L^1_{loc}(\C)$ by Fubini's theorem.
Furthermore, $z\mapsto1/(\pi z)$ is a 
fundamental solution of the $\bar\partial$ operator and it follows
that $\bar\partial\mathcal{C}(h)=\tilde{h}$ in the sense of distributions.
In another connection (see \cite[Theorem 4.3.10]{aim} and the remark thereafter),
the complex derivative $\partial\mathcal{C}(h)$ is given by the singular 
integral
\begin{equation}
\label{defB}
\mathcal{B}(h)(z):=\lim_{\varepsilon\to0} \,-\frac{1}{\pi}
\int_{\Omega\setminus D(z,\varepsilon)} \frac{h(\xi)}
{(z-\xi)^2}dm(\xi),\qquad z\in\C,
\end{equation}
which is the so-called Beurling transform of $\tilde{h}$.
By a result of Calder\`on and Zygmund (see \cite[Theorem 4.5.3]{aim}) this 
transform maps $L^p(\C)$ continuously into itself, and altogether we conclude
that $\mathcal{C}(h)\in W^{1,p}_{loc}(\C)$, as announced.

The discussion above shows in particular that 
$\varphi:=\mathcal{C}(h)_{|\Omega}$ lies in $W^{1,p}(\Omega)$, and that
\[\|\partial\varphi\|_{L^p(\Omega)}+\|\bar\partial\varphi\|_{L^p(\Omega)}=
\|\mathcal{B}(h)_{|\Omega}\|_{L^p(\Omega)}+
\|h\|_{L^p(\Omega)}\leq c\|h\|_{L^p(\Omega)},\]
where $c$ depends only on $p$. In addition, it is a consequence
of Fubini's theorem that $\|\varphi\|_{L^p(\Omega)}
\leq 6\,\mbox{\rm diam}\,\Omega\,\|h\|_{L^p(\Omega)}$  
\cite[Theorem 4.3.12]{aim}. Therefore, we have 
\begin{equation}
\label{estNSC}
\left\|\mathcal{C}(h)\right\|_{W^{1,p}(\Omega)}\leq C\|h\|_{L^p(\Omega)},
\end{equation}
where $C$ depends only on $p$ and $\Omega$. Moreover, 
if 
$\Omega\subset \mathbb D_R$, then $\mathcal{C}(h)$ coincides 
on $\Omega$ with the convolution of $\tilde{h}$ with
$z\mapsto \chi_{\mathbb D_{2R}}(z)/z$, 
where $\chi_E$ denotes the characteristic function of a set $E$. 
Therefore 
$\d \mathcal{C}(\varphi)_{|\Omega}=\mathcal{C}(\d\varphi)_{|\Omega}$
whenever $\varphi\in\mathcal{D}(\Omega)$, and by density argument it follows that 
\begin{equation}
\label{CauchyW12}
\|\mathcal{C}(h)\|_{W^{2,p}(\Omega)}\leq C\|h\|_{W^{1,p}(\Omega)},
\qquad h\in W^{1,p}_0(\Omega),
\end{equation}
for $p\in(1,\infty)$ and some $C=C(p,\Omega)$.

Properties of the Cauchy transform  make 
it a basic tool to integrate $\bar\partial$-equations in Sobolev 
classes. In this connection, we record the following facts.
\begin{itemize}
\item 
Given a bounded open set $\Omega\subset\C$  and $a\in L^p(\Omega)$
with $p\in(1,\infty)$,  a distribution $A\in \mathcal{D}'(\Omega)$ 
satisfies $\bar\partial A=a$ if and  only if 
$A=\mathcal{C}(a)+\Phi$ where $\Phi$ is holomorphic in $\Omega$.
This follows from  the relation $\bar\partial\mathcal{C}(a)=a$ and
Weyl's lemma. By \eqref{estNSC},
$A$ belongs to  $W^{1,p}(\Omega)$ if and only if
$\Phi$ does. By localization, it follows that if 
$f\in L^1_{loc}(\Omega)$ satisfies $\bar\partial f\in L^p_{loc}(\Omega)$,
then $f\in W^{1,p}_{loc}(\Omega)$.
\item  Given a bounded $C^1$-smooth simply connected  domain 
$\Omega\subset\C$   and  $a\in L^p(\Omega)$
with $p\in(1,\infty)$,
for every $\psi\in W^{1-1/p,p}(\partial\Omega)$,
$\lambda\in\R$, $\theta_0\in\R$, there exists a unique $A\in W^{1,p}(\Omega)$
such that $\bar\partial A=a$ with 
$\tr_{\partial\Omega}\mbox{\,\rm Re}\,(e^{i\theta_0}A)=\psi$,
and $\int_{\partial\Omega} \mbox{\rm Im }(e^{i\theta_0}A)=\lambda$.
Moreover, there exists $C$ depending only 
on $p$ and $\Omega$ such that 
\begin{equation}
\label{intdbar}
\|A\|_{W^{1,p}(\Omega)}\le C \bigl(\|a\|_{L^p(\Omega)}+
\|\psi\|_{W^{1-1/p,p}(\partial\Omega)}+|\lambda|\bigr).
\end{equation}
To see this, it suffices,
in view of \eqref{estNSC} and the previous remark,
to consider the case  $a=0$. Clearly, we may also  
assume that $\theta_0=0$. 
By elliptic regularity, there is a unique
$u\in W_\R^{1,p}(\Omega)$,  harmonic in $\Omega$ and such that
$\tr_{\partial\Omega}u=\psi$. Moreover, $u$ satisfies
$\|u\|_{W^{1,p}(\Omega)}\leq C\|\psi\|_{W^{1-1/p,p}(\partial\Omega)}$.
As $\Omega$ is simply connected, integrating the conjugate 
differential yields a  so-called
harmonic conjugate to $u$, that is a
real-valued harmonic function $v$, 
such that $A:=u+iv$ is holomorphic in $\Omega$.
Since $u$ and $v$ are real, the Cauchy--Riemann equations give  
$|\partial v|=|\bar\partial v|=|\partial u|$. Hence, we have $v\in W^{1,p}_\R(\Omega)$. 
Clearly $v$ is unique up to an additive 
constant, and if $\int_{\partial\Omega}v=\lambda$ we deduce from
\eqref{estfoncb}
that
$\|v\|_{W^{1,p}(\Omega)}\leq C_1\|u\|_{W^{1,p}(\Omega)}+c_1|\lambda|$
so that \eqref{intdbar} holds (with $a=0$),
as desired.
\end{itemize}
When $h\in L^2(\C)$ has unbounded support, definition
\eqref{defCauchy} of the Cauchy transform is no longer suitable.
Instead, one renormalizes the kernel and defines 
\begin{equation}
\label{defCauchy2}
\mathcal{C}_2(h)(z):=\frac1\pi\int_{\R^2}\Bigl(\frac1{z-t}+
\frac{\chi_{\C\setminus\D}(t)}{t}\Bigr) h(t)\,dm(t),
\qquad z\in\C.
\end{equation}
Since $h\in L^2(\C)$, the integral in \eqref{defCauchy2} 
converges 
for a.e. $z\in\C$ by Fubini's theorem and the Schwarz inequality. In fact, 
the function $\mathcal{C}_2(h)$ 
belongs to the space $VMO(\C)$ \cite[Theorem 4.3.9]{aim}. 
Furthermore, 
$\bar\partial\mathcal{C}_2(h)=h$ and 
$\partial\mathcal{C}_2(h)=\mathcal{B}(h)$
\cite[Theorem 4.3.10]{aim}. In particular, $\mathcal{C}_2(h)$ 
lies in $W^{1,2}_{loc}(\C)$ and
the map $h\mapsto\mathcal{C}_2(h)$ maps $L^2(\C)$ continuously
into $W^{1,2}_{loc}(\C)$.

In Section  \ref{croissC2}  we prove the following estimate, 
valid for some absolute constant $C$:
\begin{equation}
\label{estdRC2}
\frac{\|\mathcal{C}_2(h)\|_{L^2(\D_R)}}{R}\le 
C\bigl (1+(\log R)^{1/2}\bigr)
\|h\|_{L^2(\D_R)}, \qquad R\geq1.
\end{equation}

Hereafter, all classes of functions 
we consider are embedded in $L_{loc}^p(\Omega)$ 
for some $p \in (1,+\infty)$, and solutions to differential
equations are understood in the distributional 
sense. 

On the disk, we often use the elementary fact that if $f\in W^{1,p}(\D)$, 
then $f_\rho$ converges to $f$ in $W^{1,p}(\D)$ as $\rho\to1^-$.
\medskip
 
Here and later on we use the same symbols (like $C$) to denote different constants.

\section{Pseudo-holomorphic functions}
\label{x18}

Pseudo-holomorphic functions 
on an open set $\Omega\subset\C$ are those functions $\Phi$ that 
satisfy an equation of the form
\begin{equation}
\label{defps}
{\bar\partial }\Phi(z)=a(z)\overline{\Phi(z)}+b(z)\Phi(z),
\qquad z\in\Omega.
\end{equation}
We restrict ourselves to the case where $\Omega$ is bounded and
$a,b\in L^r(\Omega)$ for some $r\in[2,\infty)$. Accordingly, we only consider 
solutions $\Phi$ which belong to $L^\gamma_{loc}(\Omega)$ for some
$\gamma>r/(r-1)$, so that, by H\"older's inequality, the right hand side of \eqref{defps} defines a 
function in $L^{\lambda}_{loc}(\Omega)$ for some $\lambda>1$. As a consequence, 
$\Phi$ belongs to $W^{1,\lambda}_{loc}(\Omega)$.

Let $B\in W^{1,r}(\Omega)$ be such that $\bar\partial B=b$. A simple computation (using Proposition~\ref{embexpS}  
if $r=2$) shows that $\Phi$ satisfies \eqref{defps} if and only if $w:=e^{-B}\Phi$ satisfies
\begin{equation}
\label{eq:w}
\bar\partial w=\alpha\overline{w}, 
\end{equation}
where $\alpha:=ae^{-2i{\rm Im}B}$ has the same modulus as $a$.
Note (again from Proposition~\ref{embexpS} for $r=2$) 
that $w\in W^{1,\lambda'}_{loc}(\Omega)$
for some $\lambda'>1$. Therefore,  by the Sobolev embedding theorem,
$w$ lies in $L^{\gamma'}_{loc}(\Omega)$ for some $\gamma'>2$,
and  so  equation \eqref{eq:w} is a simpler
but equivalent form of \eqref{defps} which is the one we shall really 
work with.

We need a factorization principle
which goes back to \cite{Theodorescu}, and was called by Bers 
the similarity principle (similarity to holomorphic functions, that is). 
It was extensively used
in all works mentioned above.
We provide a proof because we include the case $r=2$ and discuss 
normalization issues when $\Omega$ is smooth.

\begin{lem}[Bers Similarity principle]
\label{expsf2}
Let $\Omega \subset\C $ be a bounded 
domain, $\alpha\in L^r(\Omega)$  for some $r\in[2,\infty)$, 
and $w\in L^{\gamma}_{loc}(\Omega)$ be a solution to
\eqref{eq:w} with  $\gamma>r/(r-1)$. Then 
\begin{itemize}
\item[(i)]
The function $w$ admits a factorization of the form
\begin{equation}
\label{decompexpH2}
w=e^sF,\qquad z \in \Omega,
\end{equation}
where $F$ is holomorphic in $\Omega$, $s \in W^{1,r}(\Omega)$ with
\begin{equation}
\label{regs2}
\|s\|_{W^{1,r}(\Omega)} 
\leq C\|\alpha\|_{L^r(\Omega)},
\end{equation}
and $C$ depends only on $r$ and $\Omega$.
\item[(ii)]
Assume in addition that $\Omega$ is $C^1$-smooth. If $w\not\equiv0$ and
we fix some $\psi\in W^{1-1/r,r}_\R(\partial\Omega)$,
$\lambda\in\R$, and $\theta_0\in\R$, then $s$ can be uniquely chosen
in \eqref{decompexpH2} 
so that $\tr_{\partial\Omega}\mbox{\rm Re}(e^{i\theta_0}s)=\psi$
and $\int_{\partial\Omega} \mbox{\rm Im }(e^{i\theta_0}s)=\lambda$.
In this case, there is a constant $C$ depending only 
on $r$ and $\Omega$ such that 
\begin{equation}
\label{intdbarb}
\|s\|_{W^{1,r}(\Omega)}\le C \bigl(\|\alpha\|_{L^r(\Omega)}+
\|\psi\|_{W^{1-1/r,r}(\partial\Omega)}+|\lambda|\bigr).
\end{equation} 
\item[(iii)]
Either
$w\equiv0$ or $w\neq0$ a. e. on $\Omega$\footnote{In fact, more is true: if 
$r>2$, then $e^s$ never vanishes and $w$ has at most countably many zeros, 
namely those of $F$. If $r=2$, $w$ is strictly defined and nonzero
outside a set of Bessel $B_{1,2}$-capacity
zero (containing the zeros of $F$ and the non Lebesgue
points of $s$).}. Moreover, 
$w\in W^{1,r}_{loc}(\Omega)$ if $r>2$ and 
$w\in W^{1,q}_{loc}(\Omega)$ for all $q\in[1,2)$ if $r=2$. 
\end{itemize}
\end{lem}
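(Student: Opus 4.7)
My plan is to prove the three parts via the classical device: find $s$ as the solution of a $\bar\partial$-equation whose right-hand side is controlled in $L^r$ by $\alpha$, then show that $F:=e^{-s}w$ is holomorphic. The key observation is that $|\alpha\bar w/w|=|\alpha|$ wherever $w\neq 0$, so setting
\[
\beta(z):=\alpha(z)\,\overline{w(z)}/w(z)\text{ where }w(z)\neq0,\qquad \beta(z):=0\text{ otherwise},
\]
one gets $|\beta|\le|\alpha|$ pointwise, hence $\beta\in L^r(\Omega)$. For part (i), I would take $s:=\mathcal{C}(\beta)_{|\Omega}$; the estimate \eqref{estNSC} gives $s\in W^{1,r}(\Omega)$ with $\|s\|_{W^{1,r}(\Omega)}\le C\|\alpha\|_{L^r(\Omega)}$ and $\bar\partial s=\beta$ in $\mathcal{D}'(\Omega)$. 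Applying the Leibniz rule,
\[
\bar\partial F=e^{-s}\bigl(-\beta w+\alpha\bar w\bigr),
\]
and the inner bracket vanishes on all of $\Omega$ by the very definition of $\beta$ (on $\{w\neq 0\}$ it does by construction, and on $\{w=0\}$ both terms vanish). By Weyl's lemma $F$ is then holomorphic, giving \eqref{decompexpH2} and \eqref{regs2}.

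For part (ii), I would first observe that any two admissible factorizations $w=e^{s_1}F_1=e^{s_2}F_2$ differ by a holomorphic term: since $w\not\equiv 0$ forces $F_1\not\equiv 0$, on the open full-measure set $\{F_1\neq 0\}$ one has $e^{s_1-s_2}=F_2/F_1$, whence $\bar\partial(s_1-s_2)=0$ a.e., and Weyl's lemma upgrades this to $s_1-s_2$ holomorphic on $\Omega$. Thus the freedom in $s$ amounts to adding a holomorphic function, and I would realize the prescribed normalization by applying the bullet point just after \eqref{CauchyW12} (unique solvability of $\bar\partial A=a$ with prescribed $\tr_{\partial\Omega}\mbox{Re}(e^{i\theta_0}A)$ and $\int_{\partial\Omega}\mbox{Im}(e^{i\theta_0}A)$) to the data $a=\beta$, $\psi$, $\lambda$, $\theta_0$; the estimate \eqref{intdbarb} is inherited directly from \eqref{intdbar}.

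For part (iii), the dichotomy follows from the factorization: if $F\equiv 0$ then $w\equiv 0$, and otherwise $F$ has isolated zeros. When $r>2$, $s$ is H\"older continuous on $\overline\Omega$, so $e^s$ is bounded and non-vanishing, giving $w\neq 0$ off a countable set; when $r=2$, Proposition~\ref{embexpS} makes $e^s$ strictly defined and positive outside the $B_{1,2}$-capacity-zero non-Lebesgue set of $s$, so again $w\neq 0$ almost everywhere. The Sobolev regularity of $w$ follows by multiplying $F$ (locally smooth) by $e^s$: in $W^{1,r}_{loc}(\Omega)$ for $r>2$ using the algebra property, and in $W^{1,q}_{loc}(\Omega)$ for every $q\in[1,2)$ when $r=2$, precisely the output of Proposition~\ref{embexpS}.

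The main obstacle is the critical case $r=2$: there $s$ is only in $W^{1,2}$, neither continuous nor locally bounded, so making rigorous sense of $e^{-s}$, justifying the Leibniz-rule computation of $\bar\partial(e^{-s}w)$, and producing the required Sobolev regularity of $w=e^s F$, all rely crucially on the mapping properties of $f\mapsto e^f$ on $W^{1,2}$ encapsulated in Proposition~\ref{embexpS}. A subsidiary delicate point is that $w$ is only known a priori to lie in $L^\gamma_{loc}$ with $\gamma>r/(r-1)$, so after producing $\bar\partial w\in L^\lambda_{loc}$ for some $\lambda>1$ via H\"older, a mollification argument is needed to validate the product rule rigorously before invoking Weyl's lemma.
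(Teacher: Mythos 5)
Your proposal is correct and follows essentially the same route as the paper: set $s=\mathcal{C}(\alpha\bar w/w)$ (with the zero convention on $\{w=0\}$) so that \eqref{estNSC} gives \eqref{regs2}, verify $\bar\partial(e^{-s}w)=0$ via the Leibniz rule justified by Proposition~\ref{embexpS} when $r=2$, obtain (ii) from the unique solvability of the normalized $\bar\partial$-problem with estimate \eqref{intdbar}, and deduce (iii) from Proposition~\ref{embexpS} and the local smoothness of $F$. The only cosmetic difference is in the uniqueness step of (ii), where the paper differentiates the factorization directly to force $\bar\partial s=\alpha\bar w/w$, while you show the difference of two admissible $s$'s is holomorphic; the two arguments are equivalent.
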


\begin{proof}
We pointed out already that
$w\in W^{1,\ell}_{loc}(\Omega)$ for some $\ell>1$.
Set by convention $\overline{w(\xi)}/w(\xi)=0$ if $w(\xi)=0$, and let
$s:=\mathcal{C}(\alpha \bar w/w)_{|\Omega}$.
Then $s\in W^{1,r}(\Omega)$ with
$\bar\partial s=\alpha\overline{w}/w$, and 
\eqref{estNSC} yields \eqref{regs2}. 
To show that $F=e^{-s}w$ is in fact 
holomorphic, we compute
\[\bar\partial(e^{-s}w)=e^{-s}\left(-\bar\partial s \,w+\bar\partial w\right)=
e^{-s}\left(-\frac{\alpha\bar w}{w}w+\alpha\bar w\right)=0,\]
where the use of the Leibniz and the chain rules  
is justified by Proposition~\ref{embexpS} if $r=2$.  This proves $(i)$.

Since $s$ is finite a.e.\ on $\Omega$ (actually outside of a set of
$B_{1,2}$-capacity zero),  $e^s$ is a.e. nonzero and so is $w$
unless the holomorphic function $F$ is identically zero. 
If $r>2$, then $e^s\in W^{1,r}(\D)$, and
since $F$ is locally smooth we get that
$w\in W^{1,r}_{loc}(\Omega)$; if $r=2$, it follows from Proposition~\ref{embexpS} that $e^s\in W^{1,q}(\Omega)$ for all $q\in[1,2)$, 
and thus $w=e^sF$ lies in $W^{1,q}_{loc}(\Omega)$. This proves $(iii)$.

Finally, if $\Omega$ is $C^1$-smooth and $w\not\equiv0$
(hence $w\neq0$ a.e.\ by the above argument), there exists a unique 
$s\in W^{1,r}(\Omega)$ satisfying the equations $\bar\partial s=\alpha\overline{w}/w$,
$\tr_{\partial\Omega}\mbox{\rm Re}(e^{i\theta_0}s)=\psi$,
$\int_{\partial\Omega} \mbox{\rm Im }(e^{i\theta_0}s)=\lambda$,
and \eqref{intdbar} yields \eqref{intdbarb}. 
Moreover, if  \eqref{decompexpH2} holds
for some $s\in W^{1,r}(\Omega)$
and some holomorphic $F$,
we find upon differentiating that  $\bar\partial s=\alpha\overline{w}/w$,
therefore factorization \eqref{decompexpH2} is unique with the 
aforementioned conditions. This proves $(ii)$.
\end{proof}

A weak converse to the similarity principle is as follows: if
$s \in W^{1,r}(\Omega)$ and $F$ is holomorphic on $\Omega$,
then $w=e^sF$ satisfies \eqref{eq:w} with 
$\alpha:=\bar \partial s\,e^{s}F/(e^{\bar s}\bar F)\in L^r(\Omega)$.
This remark shows that, in general, we cannot expect solutions of  
\eqref{eq:w} to lie in $L^\infty_{loc}(\Omega)$ when $r=2$.
\smallskip

\section{Holomorphic parametrization}
\label{x19}

When $r>2$, it follows from \cite[Theorem 3.13]{vekua} that for each   
holomorphic function 
$F$ on $\Omega$ and each $\alpha\in L^r(\Omega)$,
there is $\Phi\in W^{1,r}(\Omega)$ such that $w:=\Phi F$
satisfies \eqref{eq:w}. 
In this section we improve this assertion to
a strong converse of the similarity principle, valid for 
$2\leq r<\infty$,
which leads to a parametrization of pseudo-holomorphic functions by 
holomorphic functions.
We state the result for the disk, which is our focus
in the present paper, but we mention that it carries over at once
to Dini-smooth\footnote{A 
domain is Dini-smooth if its boundary has a parametrization with 
Dini-continuous derivative. Conformal maps between such domains have 
derivatives that extend continuously up to the boundary.}
simply 
connected  domains, granted the conformal invariance of
equation \eqref{eq:w} pointed out in \cite[Section 3.2]{BFL}.

\begin{thm} 
\label{BNpar}
Let $\alpha\in L^r(\D)$ for some $r\in[2,\infty)$, and let $F\not\equiv0$ be holomorphic on $\D$. 
Choose $\psi\in W_\R^{1-1/r,r}(\T)$, and $\lambda\in\R$.
Then there exists a unique $s\in W^{1,r}(\D)$  such that $w=e^s F$ is a solution of \eqref{eq:w}
with $\tr_{\T}\mbox{\rm Im}\,s=\psi$ and $\int_{\T} \mbox{\rm Re}\,s=\lambda$.
Moreover, \eqref{intdbarb} holds with some $C$ depending only on $r$.
\end{thm}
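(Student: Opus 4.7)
The plan is to recast the problem as a nonlinear fixed-point equation. If $w=e^s F$ with $F$ holomorphic, then $\bar\partial w=\alpha\bar w$ reduces, upon differentiation using the chain rule (justified by Proposition~\ref{embexpS} when $r=2$), to the scalar equation
\begin{equation*}
\bar\partial s=\alpha\,\frac{\bar F}{F}\,e^{-2i\,\mathrm{Im}\,s}\qquad \text{on }\D,
\end{equation*}
with the right-hand side extended by $0$ on the measure-zero set $\{F=0\}$. Since $|\bar F/F|=1$ where defined and $|e^{-2i\,\mathrm{Im}\,s}|=1$, this right-hand side lies in $L^r(\D)$ with norm at most $\|\alpha\|_{L^r(\D)}$, independently of $s$.

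For existence I would invoke Schauder's fixed-point theorem. Let $\mathcal{R}:L^r(\D)\to W^{1,r}(\D)$ be the bounded linear operator (provided by the $\bar\partial$-solvability bullet in Section~\ref{sec:notations_generales} with $\theta_0=-\pi/2$) sending $g$ to the unique $\sigma$ with $\bar\partial\sigma=g$, $\tr_\T\mathrm{Im}\,\sigma=\psi$, $\int_\T\mathrm{Re}\,\sigma=\lambda$, satisfying \eqref{intdbar}. Set $T(s):=\mathcal{R}(\alpha(\bar F/F)e^{-2i\,\mathrm{Im}\,s})$, so that $\|T(s)\|_{W^{1,r}(\D)}\le M:=C(\|\alpha\|_{L^r(\D)}+\|\psi\|_{W^{1-1/r,r}(\T)}+|\lambda|)$ independently of $s$. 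Let $K:=\{s\in W^{1,r}(\D):\|s\|_{W^{1,r}(\D)}\le M\}$, viewed as a subset of $L^r(\D)$: it is convex, closed, and compact in $L^r(\D)$ by the Rellich--Kondrachov theorem, and $T(K)\subset K$. Continuity of $T$ on $K$ in the $L^r$-topology follows from dominated convergence: if $s_n\to s$ in $L^r(\D)$, a subsequence converges a.e., so $e^{-2i\,\mathrm{Im}\,s_{n_k}}\to e^{-2i\,\mathrm{Im}\,s}$ a.e.\ while bounded by $1$; hence $\alpha(\bar F/F)e^{-2i\,\mathrm{Im}\,s_{n_k}}\to\alpha(\bar F/F)e^{-2i\,\mathrm{Im}\,s}$ in $L^r(\D)$, and continuity of $\mathcal R$ plus a standard subsequence argument gives $T(s_n)\to T(s)$ in $L^r(\D)$. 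Schauder's theorem yields a fixed point $s\in K$; direct verification (as in the proof of Lemma~\ref{expsf2}) shows $w:=e^sF$ satisfies \eqref{eq:w}, and $\|s\|_{W^{1,r}(\D)}\le M$ gives \eqref{intdbarb}.

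For uniqueness, let $s_1,s_2$ both satisfy the conclusion and set $t:=s_1-s_2$, so $\tr_\T\mathrm{Im}\,t=0$ and $\int_\T\mathrm{Re}\,t=0$. Subtracting the scalar equations for $s_1,s_2$ and writing $e^{-2ia}-e^{-2ib}=(a-b)\int_0^1(-2i)e^{-2i(b+\tau(a-b))}\,d\tau$ gives $\bar\partial t=At-A\bar t$ with $A\in L^r(\D)$, $|A|\le|\alpha|$. Applying the Bers similarity principle to this general equation (first absorb the coefficient of $t$ via the $b$-reduction recalled at the beginning of Section~\ref{x18}, then invoke Lemma~\ref{expsf2}) factors $t=e^\Sigma G$ with $G$ holomorphic in $\D$ and $\Sigma\in W^{1,r}(\D)$; by Lemma~\ref{expsf2}(ii), $\Sigma$ may be normalized so that $\tr_\T\mathrm{Im}\,\Sigma=0$ and $\int_\T\mathrm{Re}\,\Sigma=0$. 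Then $e^{\Sigma|_\T}>0$ a.e., and $\tr_\T\mathrm{Im}\,t=0$ forces $\tr_\T G$ to be real-valued. The factorization $G=te^{-\Sigma}$ combined with H\"older's inequality and exponential integrability of $\Sigma|_\T$ (for $r=2$, via Proposition~\ref{embexpS}) yields $\sup_{\rho<1}\|G(\rho\cdot)\|_{L^1(\T)}<\infty$, hence $G\in H^1(\D)$. A holomorphic $H^1$-function with real boundary values a.e.\ is a real constant: its imaginary part is harmonic with zero boundary trace, hence vanishes identically. Writing $G\equiv c\in\R$, the condition $\int_\T\mathrm{Re}\,t=c\int_\T e^{\Sigma|_\T}=0$ combined with $\int_\T e^{\Sigma|_\T}>0$ gives $c=0$, so $t\equiv 0$.

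The main obstacle is the uniqueness step, specifically verifying $G\in H^1(\D)$ when $r=2$: in that regime the exponential of a $W^{1,2}$ function on $\D$ only barely has the integrability required on its boundary trace, and this is precisely what Proposition~\ref{embexpS} from the appendix is designed to address.
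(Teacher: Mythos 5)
Your proposal is correct in substance, but it follows a genuinely different route from the paper on both halves of the statement. For existence, the paper regularizes $\alpha$, studies the Green-potential operator $G_\alpha$ on $W^{1,2}_\R(\D)$, invokes Leray--Schauder (whose a priori bound comes from Lemma~\ref{expsf2}~(ii)), and then passes to the limit and bootstraps to $r>2$; you instead exploit the fact that the nonlinearity enters only through the unimodular factor $e^{-2i\,\mathrm{Im}\,s}$, so the natural map $T(s)=\mathcal{R}\bigl(\alpha(\bar F/F)e^{-2i\,\mathrm{Im}\,s}\bigr)$ sends all of $L^r(\D)$ into a fixed ball of $W^{1,r}(\D)$, and plain Schauder on that ball (compact and closed in $L^r$ by Rellich--Kondrachov and reflexivity) does the job uniformly in $r\in[2,\infty)$, with \eqref{intdbarb} built in; note only that $\mathcal{R}$ is affine rather than linear, and that $\theta_0=-\pi/2$ in \eqref{intdbar} prescribes $\int_\T\mathrm{Re}\,\sigma=-\lambda$, a harmless sign. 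For uniqueness, the paper reflects $s=s_1-s_2$ across $\T$, applies the renormalized Cauchy transform $\mathcal{C}_2$, runs a growth/Liouville argument, and finishes with a measure-theoretic boundary estimate (machinery that also yields Theorem~\ref{x16} as a by-product); you instead observe that $t=s_1-s_2$ solves a full pseudo-holomorphic equation $\bar\partial t=At-A\bar t$ with $|A|\le|\alpha|$, factor $t=e^{\Sigma}G$ via the $b$-reduction of Section~\ref{x18} and Lemma~\ref{expsf2}~(ii) with $\tr_\T\mathrm{Im}\,\Sigma=0$, and kill $G$ by the classical rigidity of Hardy functions with a.e.\ real boundary values. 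This is noticeably shorter; what it does not give is the Liouville-type statement of independent interest. Two details deserve to be written out: first, you should justify the boundary identification $\tr_\T t=e^{\tr_\T\Sigma}\,G_{|\T}$ a.e.\ on $\T$ before asserting that $G_{|\T}$ is real; this follows from Proposition~\ref{embexpS} together with the continuity of the trace operator (approximate $t$ and $\Sigma$ by smooth functions, so that $\tr_\T(te^{-\Sigma})=\tr_\T t\,e^{-\tr_\T\Sigma}$), or from a radial-limit argument as in Theorem~\ref{trace2}~(i). Second, your $H^1$ step can be streamlined: since $G=te^{-\Sigma}\in W^{1,\ell}(\D)$ for all $\ell\in[1,2)$ by Proposition~\ref{embexpS}, Lemma~\ref{bornewr} with $q=4/3$ already gives $G\in H^{2}$, so the Poisson representation and the conclusion that $G$ is a real constant (then $G\equiv0$ from $\int_\T\mathrm{Re}\,t=0$ and $\int_\T e^{\tr_\T\Sigma}>0$) are immediate.
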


From the proof of the theorem, we obtain also the following variant thereof.

\begin{cor}
\label{BNreal}
Theorem~\ref{BNpar} remains valid if, instead of 
$\tr_{\T}\mbox{\rm Im}\,s=\psi$ and $\int_{\T} \mbox{\rm Re}\,s=\lambda$, 
we prescribe 
 $\tr_{\T}\mbox{\rm Re}\,s=\psi$
and $\int_{\T} \mbox{\rm Im}\,s=\lambda$.
\end{cor}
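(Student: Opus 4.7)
The plan is to recast existence as a fixed-point problem solved by Schauder's theorem, and to deduce uniqueness from the Bers similarity principle applied to the difference of two putative solutions. Using the chain rule (justified at $r=2$ by Proposition~\ref{embexpS}), $w = e^s F$ with $F$ holomorphic solves \eqref{eq:w} if and only if
\[
\bar\partial s \;=\; \alpha\, e^{-2i\,\mathrm{Im}\,s}\,\bar F/F \qquad \text{a.e.\ on }\D,
\]
and the modulus of the right-hand side equals $|\alpha|\in L^r(\D)$ independently of $s$ — the key feature enabling the approach at the critical exponent.

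Define $T\colon W^{1,r}(\D) \to W^{1,r}(\D)$ by $T(s_0) := s_1$, where $s_1$ is the unique element of $W^{1,r}(\D)$ satisfying $\bar\partial s_1 = \alpha\, e^{-2i\,\mathrm{Im}\,s_0}\,\bar F/F$, $\mathrm{tr}_\T\,\mathrm{Im}\,s_1 = \psi$, and $\int_\T \mathrm{Re}\,s_1 = \lambda$; existence of $s_1$ and the bound $\|s_1\|_{W^{1,r}(\D)} \le M := C\bigl(\|\alpha\|_{L^r(\D)} + \|\psi\|_{W^{1-1/r,r}(\T)} + |\lambda|\bigr)$ come from the integration result for $\bar\partial$-equations with mixed boundary data from Section~\ref{sec:notations_generales} (applied with $\theta_0 = \pi/2$ and $-\psi$ in place of $\psi$), cf.~\eqref{intdbar}. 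Thus $T$ sends the closed ball $B_M \subset W^{1,r}(\D)$ into itself. If $s_0^n \rightharpoonup s_0$ weakly in $W^{1,r}$, the Rellich--Kondrachov theorem yields strong $L^q$-convergence and pointwise a.e.\ convergence along a subsequence; since $|e^{-2i\,\mathrm{Im}\,s_0^n}| \equiv 1$, dominated convergence produces $L^r$-convergence of the right-hand sides, which \eqref{intdbar} upgrades to norm convergence $T(s_0^n) \to T(s_0)$ in $W^{1,r}$. Hence $T|_{B_M}$ is continuous and compact, so Schauder's fixed-point theorem furnishes $s \in B_M$ with $T(s) = s$, establishing existence together with the bound \eqref{intdbarb}.

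The main obstacle is uniqueness. Let $s, s' \in W^{1,r}(\D)$ both satisfy the conditions of the theorem and set $\sigma := s - s'$, so that $\mathrm{tr}_\T\,\mathrm{Im}\,\sigma = 0$ and $\int_\T \mathrm{Re}\,\sigma = 0$. Writing $e^{-2i\,\mathrm{Im}\,s} - e^{-2i\,\mathrm{Im}\,s'} = e^{-2i\,\mathrm{Im}\,s'}(e^{-2i\,\mathrm{Im}\,\sigma} - 1)$ and using the identity $e^{-2ix} - 1 = -2ix \int_0^1 e^{-2itx}\,dt$, the equation for $\sigma$ takes the pseudo-holomorphic form \eqref{defps}, namely $\bar\partial\sigma = c\sigma + d\bar\sigma$ with $|c|, |d| \le |\alpha|$ pointwise (hence $c, d \in L^r(\D)$). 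Lemma~\ref{expsf2} applied to $\sigma$ (after the standard reduction of \eqref{defps} to \eqref{eq:w}) factorizes $\sigma = e^{s_\sigma} F_\sigma$ with $F_\sigma$ holomorphic and $s_\sigma \in W^{1,r}$; using the holomorphic freedom $(s_\sigma, F_\sigma) \mapsto (s_\sigma + h, e^{-h}F_\sigma)$, Lemma~\ref{expsf2}(ii) allows us to arrange $\mathrm{tr}_\T\,\mathrm{Im}\,s_\sigma = 0$ and $\int_\T \mathrm{Re}\,s_\sigma = 0$. Then $\mathrm{tr}_\T s_\sigma$ is real and $e^{\mathrm{tr}_\T s_\sigma} > 0$ a.e.\ on $\T$; the condition $\mathrm{tr}_\T\,\mathrm{Im}\,\sigma = 0$ forces $\mathrm{tr}_\T\,\mathrm{Im}\,F_\sigma = 0$, and the maximum principle applied to the harmonic function $\mathrm{Im}\,F_\sigma$ gives $F_\sigma \equiv c$ for some real constant $c$. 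Finally $\int_\T \mathrm{Re}\,\sigma = c \int_\T e^{\mathrm{tr}_\T\,\mathrm{Re}\,s_\sigma}\,d\theta = 0$ with strictly positive integrand yields $c = 0$, so $\sigma \equiv 0$ and $s = s'$.

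Corollary~\ref{BNreal} follows by the same argument with $\theta_0 = 0$ used in the definition of $T$ (enforcing $\mathrm{tr}_\T\,\mathrm{Re}\,s_1 = \psi$ and $\int_\T \mathrm{Im}\,s_1 = \lambda$); the uniqueness step uses the same Bers normalization of $s_\sigma$, but the boundary condition $\mathrm{tr}_\T\,\mathrm{Re}\,\sigma = 0$ now forces $F_\sigma \equiv ic$ to be a purely imaginary constant, which vanishes by $\int_\T \mathrm{Im}\,\sigma = c \int_\T e^{\mathrm{tr}_\T\,\mathrm{Re}\,s_\sigma}\,d\theta = 0$.
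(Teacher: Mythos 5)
Your existence argument is sound and is a genuinely different, lighter route than the paper's: you run a single Schauder fixed point for the solution operator of the linear $\bar\partial$-problem, exploiting that the right-hand side $\alpha e^{-2i\,\mathrm{Im}\,s_0}\bar F/F$ has modulus $|\alpha|$ for every iterate, so that \eqref{intdbar} makes $T$ a self-map of a fixed ball and no a priori estimate, no approximation of $\alpha$, and no reduction to $F\equiv1$ are needed; moreover both normalizations are treated symmetrically through the choice of $\theta_0$. The paper instead works with the second-order equation for $\mathrm{Im}\,s$ via the Green potential $G_\alpha$ (which forces $\alpha\in W^{1,2}\cap L^\infty$ first, then a limiting argument), uses Leray--Schauder with the bound of Lemma~\ref{expsf2}~(ii), and, for the corollary specifically, adds a second fixed-point problem \eqref{condRe0} on $W^{1/2,2}_\R(\T)$ (Lemma~\ref{contFP}); your scheme bypasses that extra layer entirely.

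In the uniqueness part, however, there is a genuine gap at the decisive step: ``the maximum principle applied to the harmonic function $\mathrm{Im}\,F_\sigma$ gives $F_\sigma\equiv c$.'' A holomorphic function on $\D$ whose boundary values are real a.e.\ need not be constant --- $i(1+z)/(1-z)$ is purely imaginary on $\T\setminus\{1\}$, hence $i(1+z)/(1-z)$ has real boundary values and is unbounded --- and at the critical exponent nothing in your argument so far excludes such behaviour of $F_\sigma$, since $e^{s_\sigma}$ is neither bounded above nor below; this is exactly the difficulty that led the paper to the reflection, the renormalized Cauchy transform $\mathcal{C}_2$, and the Liouville-type argument of Subsection~4.2.2. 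The step can be repaired with the paper's own tools: since $\sigma$ and $s_\sigma$ lie in $W^{1,2}(\D)$, Proposition~\ref{embexpS} gives $F_\sigma=e^{-s_\sigma}\sigma\in W^{1,\ell}(\D)$ for every $\ell\in[1,2)$, so Lemma~\ref{bornewr} shows that the holomorphic function $F_\sigma$ satisfies the Hardy condition, $F_\sigma\in H^{p'}$ for finite $p'$; then $F_\sigma$ is the Poisson integral of its boundary function and your conclusion ($F_\sigma$ a real, resp.\ purely imaginary, constant, which the mean condition kills) follows. You should also justify the identity $\tr_\T\sigma=e^{\tr_\T s_\sigma}(F_\sigma)_{|\T}$ a.e.\ on $\T$, which your ``forces $\tr_\T\,\mathrm{Im}\,F_\sigma=0$'' tacitly uses; a radial-limit argument as in the proof of Theorem~\ref{trace2} does it. With these two additions your secant-linearization argument (writing $\bar\partial\sigma=c\sigma+d\bar\sigma$ with $|c|,|d|\le|\alpha|$ and invoking Lemma~\ref{expsf2}) appears to close uniqueness by a route quite different from, and shorter than, the paper's; as written, though, the Hardy-class membership of $F_\sigma$ --- the point where $r=2$ actually bites --- is missing.
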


Before establishing  Theorem~\ref{BNpar},  we need to take a closer look
at pairs $s,F$ for which
\eqref{decompexpH2} and \eqref{eq:w} hold. We do this in the following
subsection.

\subsection{Arguments of pseudo-holomorphic functions}
Let $w\in L^\gamma_{loc}(\D)$ satisfy \eqref{eq:w}, $\gamma>r/(r-1)$,
and consider  factorization
\eqref{decompexpH2} provided by Lemma~\ref{expsf2}.
Locally around points where $F$ does not vanish, $w$ 
has a Sobolev-smooth  argument,
unique modulo $2\pi\ZZ$, which is given by $\arg w=\arg F+\mbox{\rm Im}\,s$.
Since $\log F$ is harmonic and $\bar \d  s=\alpha \bar w/w$, 
we deduce that around such points 
$\Delta \log w=4\d(\alpha e^{-2i\arg w})$. In particular, 
$\arg w$ satisfies 
the nonlinear (yet quasilinear) equation 
$\Delta\arg w=4\,{\rm Im}(\d(\alpha e^{-2i\arg w}))$, and then $\log|w|$ is  
determined by $\arg w$ up to a harmonic function that turns out to be
completely determined by \eqref{eq:w}. The lemma below dwells 
on this observation but avoids speaking of $\arg F$ 
(which may not be globally defined if $F$ has zeros).

\begin{lem} 
\label{arglisse}
Let $\alpha\in L^r(\D)$ for some $r\in[2,\infty)$ and let $F$ be a non 
identically zero holomorphic function in $\D$. If we set
$\beta:=\alpha\overline{F}/F$, then a function 
$s\in W^{1,r}(\D)$ is such that $w:=e^sF$ satisfies \eqref{eq:w} if
and only if $\bar\d s=\beta e^{-2i\text{\rm Im}\,s}$.
This is equivalent to saying that
$s=\varphi_1+i\varphi_2$ where
$\varphi_1,\varphi_2\in W^{1,r}_{\R}(\D)$ satisfy the relations 
\begin{align}
\label{eq:phi2}
\Delta\varphi_2&=4\,{\rm Im}\bigl(\d\left(\beta e^{-2i\varphi_2}\right)
\bigr),\\
\label{eq:phi1}
\varphi_1&={\rm Re}\,\mathcal{C}\left(\beta e^{-2i\varphi_2}\right)+v,
\end{align}
where $v$ is a harmonic conjugate to
the harmonic function $u\in W_\R^{1,r}(\D)$ such that 
$\tr_\T u =\tr_\T \,{\rm Im}\left(\mathcal{C}(\beta e^{-2i\varphi_2})\right)-
\tr_\T\varphi_2$.
\end{lem}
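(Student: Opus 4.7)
My plan is to verify the two asserted equivalences in sequence, reducing the whole lemma to the Cauchy-transform representation of $\bar\partial$-solutions recalled in Section~\ref{sec:notations_generales}. The first equivalence is essentially a one-line computation, and the second is a reorganisation of the resulting equation into real and imaginary parts using $s=\mathcal{C}(g)+H$ with $H$ holomorphic.

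For the first equivalence, I would compute directly: since $F$ is holomorphic, the Leibniz and chain rules (valid because $W^{1,r}(\D)$ is an algebra for $r>2$, and by Proposition~\ref{embexpS} for $r=2$) give $\bar\partial w = Fe^s\,\bar\partial s$, while $\alpha\bar w = \alpha\bar F\,e^{\bar s}$. Dividing by $Fe^s$, which is nonzero almost everywhere since $e^s$ is a.e.\ nonzero and $F\not\equiv 0$ has only isolated zeros, the identity $\bar\partial w=\alpha\bar w$ becomes $\bar\partial s=\alpha(\bar F/F)\,e^{\bar s-s}=\beta e^{-2i\,\mathrm{Im}\,s}$, as claimed.

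For the second equivalence I would abbreviate $g:=\beta e^{-2i\varphi_2}\in L^r(\D)$ (the exponential has modulus one because $\varphi_2$ is real). Any $s\in W^{1,r}(\D)$ with $\bar\partial s=g$ has the form $s=\mathcal{C}(g)+H$ with $H$ holomorphic on $\D$, by Weyl's lemma applied to $s-\mathcal{C}(g)$. In the forward direction, I would set $v:=\mathrm{Re}\,H$ and $u:=-\mathrm{Im}\,H$; then $u+iv=iH$ is holomorphic, so $v$ is a harmonic conjugate to the harmonic function $u$, and the identity $u=\mathrm{Im}\,\mathcal{C}(g)-\varphi_2$ in $\D$ gives both the trace condition on $u$ and the relation $\varphi_1=\mathrm{Re}\,\mathcal{C}(g)+v$, i.e.\ \eqref{eq:phi1}. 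Applying $\Delta=4\partial\bar\partial$ to $\varphi_2=\mathrm{Im}\,\mathcal{C}(g)+\mathrm{Im}\,H$ and using $\bar\partial\mathcal{C}(g)=g$ together with the harmonicity of $\mathrm{Im}\,H$ yields \eqref{eq:phi2}.

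Conversely, given $\varphi_1,\varphi_2\in W^{1,r}_\R(\D)$ satisfying \eqref{eq:phi2}--\eqref{eq:phi1}, I would set $s:=\varphi_1+i\varphi_2$ and $h:=s-\mathcal{C}(g)\in W^{1,r}(\D)$. Equation \eqref{eq:phi2} together with $\Delta\mathcal{C}(g)=4\partial g$ makes $\mathrm{Im}\,h=\varphi_2-\mathrm{Im}\,\mathcal{C}(g)$ harmonic in $\D$, and since it has the same trace on $\T$ as $-u$, uniqueness of the Dirichlet problem in $W^{1,r}(\D)$ forces $\mathrm{Im}\,h=-u$ throughout $\D$. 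Combined with $\mathrm{Re}\,h=v$ coming from \eqref{eq:phi1}, this yields $h=v-iu=-i(u+iv)$, which is holomorphic, so $\bar\partial s=\bar\partial\mathcal{C}(g)=g=\beta e^{-2i\varphi_2}$. The only genuine obstacle along the way is technical and lives in the critical case $r=2$: Proposition~\ref{embexpS} is indispensable for giving meaning to and differentiating the exponentials $e^s$ and $e^{-2i\varphi_2}$ in a Sobolev class compatible with the Leibniz rule used in the first step; once that is secured, the rest is bookkeeping of real/imaginary parts and standard identities for~$\mathcal{C}$.
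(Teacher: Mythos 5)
Your proposal is correct and follows essentially the same route as the paper: the first equivalence is the same Leibniz/chain-rule computation (with Proposition~\ref{embexpS} covering $r=2$), and your decomposition $s=\mathcal{C}\bigl(\beta e^{-2i\varphi_2}\bigr)+H$ with $H$ holomorphic is exactly the paper's $\varphi_1=\mathcal{C}\bigl(\beta e^{-2i\varphi_2}\bigr)-i\varphi_2+A$, with \eqref{eq:phi2} appearing as the harmonicity/solvability condition and \eqref{eq:phi1} from taking real parts. The only difference is organizational: you check the two implications separately (invoking uniqueness of the Dirichlet problem for harmonic functions in $W^{1,r}(\D)$ in the converse), whereas the paper argues the equivalence in one sweep.
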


\begin{proof}
Using Proposition~\ref{embexpS}  to justify the computation
in case $r=2$, we 
find that $s\in W^{1,r}(\D)$ with $w=e^sF$ satisfies \eqref{eq:w} 
if and only if $\bar\d s-\beta e^{\bar s-s} =0$.
With the notation $\varphi_1:={\rm Re}\,s$  and  $\varphi_2:={\rm Im}\,s$ 
this is equivalent to
\begin{equation}
\label{eqs2}
\bar\d \varphi_1=\beta \exp\left(-2i\varphi_2\right)-i\bar\d\varphi_2,
\qquad \varphi_1,\varphi_2\in W^{1,r}_\R(\D). 
\end{equation}
Solving this $\bar\d$-equation for $\varphi_1$ using
the Cauchy operator, we can rewrite \eqref{eqs2} as 
\begin{equation}
 \varphi_1=\mathcal{C}\left(\beta e^{-2i\varphi_2}\right)-
i\varphi_2+A,
\qquad \varphi_1,\varphi_2\in W^{1,r}_\R(\D),
\label{resRe}
\end{equation}
where $A$ is holomorphic in $\D$. Since 
$\beta e^{-2i\varphi_2}\in L^r(\D)$  we obtain that
$\mathcal{C}(\beta e^{-2i\varphi_2})\in W^{1,r}(\D)$, hence
$\varphi_1, \,\varphi_2$ belong to $W^{1,r}(\D)$ 
if and only if $A$ does. 
Therefore, given $\varphi_2\in W^{1,r}_{\R}(\D)$,
equation  \eqref{resRe} gives rise to a real-valued $\varphi_1$ in $W^{1,r}(\D)$ if and only if the holomorphic function
$A$ lies in $W^{1,r}(\D)$ and satisfies the relation 
\begin{equation}
\label{resIm}
-{\rm Im}\,\mathcal{C}\left(\beta e^{-2i\varphi_2}\right)+
\varphi_2=
{\rm Im}\,A.
\end{equation}   
By the discussion after \eqref{intdbar} such an $A$ exists if and only 
if 
the left hand side of \eqref{resIm} is harmonic; since $\Delta$ 
commutes
with taking the imaginary part, this condition amounts to
$$
\Delta\varphi_2-4{\rm Im}\Bigl(\d\bar\d\mathcal{C}(\beta 
e^{-2i\varphi_2})\Bigr)=\Delta\varphi_2-4{\rm Im}\Bigl(
\d(\beta e^{-2i\varphi_2})\Bigr)=0
$$
which is \eqref{eq:phi2}. Then, by
\eqref{resIm},  
${\rm Im}\,A$ is the 
harmonic function $h\in W_\R^{1,r}(\D)$ having trace
$\tr_\T\varphi_2-\tr_\T{\rm Im}\,\mathcal{C}(\beta e^{-2i\varphi_2})\in 
W^{1-1/r,r}(\T)$. Subsequently ${\rm Re}\,A={\rm Im}\,(iA)$ must be 
a harmonic conjugate to $-h=u$, and taking real parts in 
\eqref{resRe} yields \eqref{eq:phi1}.
\end{proof}

\subsection{Proof of Theorem \ref{BNpar}}

\subsubsection{Existence part.}
\label{existence}
In this subsection, we prove existence of $s$ in the conditions of Theorem~\ref{BNpar}.
Note that \eqref{intdbarb} will automatically 
hold by Lemma~\ref{expsf2}~(ii) applied with $\theta_0=-\pi/2$.
Let $A\in W^{1,r}(\D)$ be  holomorphic in $\D$ with
$\tr_\T \mbox{\rm Re\,}A=\psi$ and $\int_\T{\rm Im\,}A=-\lambda$.
Writing $e^sF=e^{s-iA}(e^{iA}F)$, we see that we may  assume
$\psi=0$ and $\lambda=0$ upon replacing $F$ by $e^{iA} F$.
In addition, upon changing
$\alpha$ by $\alpha {\overline{F}/ F}$, 
we can further suppose that $F\equiv1$
thanks to \eqref{eq:phi2} and \eqref{eq:phi1}.

We first deal with the case  $r=2$ and begin  with 
fairly smooth $\alpha$, say $\alpha\in W^{1,2}(\D)\cap L^\infty(\D)$. 
Consider the following (non-linear) operator $G_\alpha$ acting on 
$\varphi\in W_\R^{1,2}(\D)$:
\begin{equation}
\label{defGalpha}
G_\alpha(\varphi)(z):=-\frac2\pi \int_\D \log\Bigl| \frac{1-\bar z t}{z-t}
\Bigr|\mbox{\rm Im}\Bigl(\d\bigl(\alpha(t)  e^{-2i\varphi(t)}\bigr)\Bigr) 
\,dm(t),\quad z\in\D.
\end{equation}
Since $|e^{-2i\varphi}|=1$ and $\alpha\in W^{1,2}(\D)\cap L^\infty(\D)$, 
we get from Proposition~\ref{embexpS} 
that $\partial(\alpha e^{-2i\varphi})\in L^2(\D)$, therefore
the above integral exists for every $z\in\C$ by
the Schwarz inequality. In fact,  $G_\alpha(\varphi)$ is  
the Green potential of 
$4\mbox{\rm Im}\bigl(\d\bigl(\alpha e^{-2i\varphi}\bigr)\bigr)$ 
in $\D$, that is, its distributional  Laplacian is 
$4 \mbox{\rm Im}\bigl(\d\bigl(\alpha  e^{-2i\varphi}\bigr)\bigr)$ and its 
value on $\T$ is zero, compare to \cite[Section 4.8.3]{aim}. 
To prove existence of $s$ subject to the conditions $\psi=0$, $\lambda=0$, 
and $F\equiv1$,  
it suffices by Lemma~\ref{arglisse}
to verify that $G_\alpha$ has a 
fixed point in  $W_\R^{1,2}(\D)$. 
First, we check that $G_\alpha$ is compact from
$W_\R^{1,2}(\D)$ into itself,
meaning that it is continuous and maps bounded sets to relatively compact ones.

\begin{lem}
\label{Galphacomp}
If $\alpha\in W^{1,2}(\D)\cap L^\infty(\D)$, then the operator 
$G_\alpha$ is bounded and continuous from $W_\R^{1,2}(\D)$ into 
$W_\R^{2,2}(\D)$ and it is compact from $W_\R^{1,2}(\D)$ into itself.
\end{lem}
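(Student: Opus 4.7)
\textbf{Proof plan for Lemma~\ref{Galphacomp}.}

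The plan is to realize $G_\alpha(\varphi)$ as the solution of a Dirichlet problem for the Laplacian with an $L^2$ right-hand side depending on $\varphi$, then invoke elliptic regularity on the (smooth) disk to get $W^{2,2}$-regularity, and finally deduce compactness into $W^{1,2}$ via Rellich--Kondrachov. Write
\begin{equation*}
h(\varphi):=4\,\mathrm{Im}\bigl(\partial(\alpha e^{-2i\varphi})\bigr)
=4\,\mathrm{Im}\bigl((\partial\alpha)e^{-2i\varphi}-2i\alpha e^{-2i\varphi}\partial\varphi\bigr),
\end{equation*}
the product/chain rule being justified via Proposition~\ref{embexpS} (which in particular gives $e^{-2i\varphi}\in W^{1,q}(\D)$ for $q<2$ and permits the identification of its distributional derivative with $-2ie^{-2i\varphi}\partial\varphi$). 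Since $|e^{-2i\varphi}|\equiv 1$, $\partial\alpha\in L^2(\D)$, $\alpha\in L^\infty(\D)$, and $\partial\varphi\in L^2(\D)$, a pointwise estimate immediately yields
\begin{equation*}
\|h(\varphi)\|_{L^2(\D)}\le C\bigl(\|\partial\alpha\|_{L^2(\D)}+\|\alpha\|_{L^\infty(\D)}\|\varphi\|_{W^{1,2}(\D)}\bigr).
\end{equation*}

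By construction $G_\alpha(\varphi)$ is the Green potential of $h(\varphi)$ on $\D$, so it is the unique real-valued solution in $W^{1,2}_0(\D)$ of $\Delta u=h(\varphi)$ with $\mathrm{tr}_\T u=0$. Since $\d\D$ is $C^\infty$-smooth, classical elliptic regularity for the Poisson equation in $\D$ gives $u\in W^{2,2}_\R(\D)$ with $\|u\|_{W^{2,2}(\D)}\le C\|h(\varphi)\|_{L^2(\D)}$. Combined with the previous display this establishes the boundedness of $G_\alpha\colon W^{1,2}_\R(\D)\to W^{2,2}_\R(\D)$.

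For continuity it suffices, by linearity of the solution map, to show $h(\varphi_n)\to h(\varphi)$ in $L^2(\D)$ whenever $\varphi_n\to\varphi$ in $W^{1,2}_\R(\D)$. Rellich--Kondrachov gives $\varphi_n\to\varphi$ in $L^\ell(\D)$ for every finite $\ell$, so along a subsequence $e^{-2i\varphi_n}\to e^{-2i\varphi}$ a.e.\ with modulus one. Splitting
\begin{equation*}
h(\varphi_n)-h(\varphi)=4\,\mathrm{Im}\bigl((\partial\alpha)(e^{-2i\varphi_n}-e^{-2i\varphi})\bigr)-8\,\mathrm{Im}\bigl(i\alpha(e^{-2i\varphi_n}\partial\varphi_n-e^{-2i\varphi}\partial\varphi)\bigr),
\end{equation*}
the first piece converges to $0$ in $L^2$ by the dominant $2|\partial\alpha|\in L^2$; the second piece is bounded by
\begin{equation*}
2\|\alpha\|_{L^\infty}\|\partial\varphi_n-\partial\varphi\|_{L^2}+2\|\alpha\|_{L^\infty}\bigl\|(e^{-2i\varphi_n}-e^{-2i\varphi})\partial\varphi\bigr\|_{L^2},
\end{equation*}
and both summands vanish in the limit (the first by hypothesis, the second by dominated convergence with dominant $2|\partial\varphi|\in L^2$). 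Since every subsequence admits a further subsequence converging to the same limit, the whole sequence converges, proving continuity into $W^{2,2}_\R(\D)$.

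Finally, compactness of $G_\alpha\colon W^{1,2}_\R(\D)\to W^{1,2}_\R(\D)$ follows from the boundedness into $W^{2,2}_\R(\D)$ together with the compact embedding $W^{2,2}(\D)\hookrightarrow W^{1,2}(\D)$ noted at the end of Section~\ref{sec:notations_generales}. The only mildly delicate point is the first step, namely justifying that the product/chain rule holds for $\partial(\alpha e^{-2i\varphi})$ with $\alpha\in W^{1,2}\cap L^\infty$ and $\varphi\in W^{1,2}_\R$; this is precisely where Proposition~\ref{embexpS} is indispensable, since a priori $e^{-2i\varphi}$ lies only in $W^{1,q}$ for $q<2$, and the $L^\infty$-bound on $\alpha$ is what rescues the $L^2$-integrability of the second summand in the expansion of $h(\varphi)$.
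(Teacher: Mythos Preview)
Your proof is correct and follows essentially the same approach as the paper: factor $G_\alpha$ as the composition of the nonlinear map $\varphi\mapsto 4\,\mathrm{Im}\bigl(\partial(\alpha e^{-2i\varphi})\bigr)$ (bounded and continuous into $L^2$ via \eqref{CR2} and dominated convergence) with the linear Green-potential solution operator (bounded from $L^2$ into $W^{2,2}$), then conclude compactness into $W^{1,2}$ by Rellich--Kondrachov. The only cosmetic difference is that you invoke classical elliptic regularity for the Dirichlet problem on the smooth disk, whereas the paper obtains the $L^2\to W^{2,2}$ bound for the Green potential directly from the properties of the Cauchy and Beurling transforms listed in Section~\ref{sec:notations_generales}; both arguments are standard and yield the same conclusion.
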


\begin{proof}
To prove the boundedness and continuity of
$G_\alpha$ from  $W_\R^{1,2}(\D)$ into $W_\R^{2,2}(\D)$, observe from 
\eqref{CR2} and the dominated convergence theorem that
the map $\varphi\mapsto  \mbox{\rm Im}\, \left(\d\left(\alpha  e^{-2i\varphi}\right)\right)$ is bounded and continuous from 
$W^{1,2}_\R(\D)$ into $L^2_\R(\D)$. Therefore it suffices to prove 
the boundedness from
$L^2_\R(\D)$ into $W^{2,2}_\R(\D)$ of the linear potential
operator:
$$
P(\psi):= -\frac{1}{2\pi}\int_\D \log\Bigl|\frac{1-\bar z t}{z-t}\Bigr|\psi(t) \,dm(t).
$$
The latter is a consequence of properties of the
Cauchy and Beurling transforms listed in Section 
\ref{sec:notations_generales}  \cite[Section 4.8.3]{aim}. Compactness of $G_\alpha$ from $L^2(\D)$ into
$W^{1,2}(\D)$ now follows from compactness of the embedding of
$W^{2,2}(\D)$ into
$W^{1,2}(\D)$
asserted by the Rellich--Kondrachov theorem.
\end{proof}

Since $G_\alpha$ is compact on $W_\R^{1,2}(\D)$, a sufficient 
condition for it to have a fixed point is given by
the Leray--Schauder theorem \cite[Theorem 11.3]{gt}:
there is a number $M$ for which the {\it a priori} estimate 
$\|\varphi \|_{W^{1,2}(\D)}\le M$ holds
whenever $\varphi\in W_\R^{1,2}(\D)$ and $\varepsilon\in [0,1]$ 
satisfy
\begin{equation}
\label{pfeps}
\varphi=\varepsilon G_\alpha(\varphi).
\end{equation}
Now, if \eqref{pfeps} is true, then \eqref{eq:phi2} is satisfied with 
$\beta=\varepsilon\alpha$ and $\varphi$ instead of $\varphi_2$.
Therefore by Lemma~\ref{arglisse}, there exist 
$\varphi_{1,\varepsilon}\in W_\R^{1,2}(\D)$ and 
$s_\varepsilon:=\varphi_{1,\varepsilon}+i\varphi$ such that
$$
\bar{\d}e^{s_\varepsilon}=\varepsilon \alpha\,\overline{e^{s_\varepsilon}}.
$$
Applying Lemma~\ref{expsf2}~(ii) with $\Omega=\D$, $F\equiv1$,
$s=s_\varepsilon$,
$\psi\equiv0$,
$\theta_0=-\pi/2$ and $\lambda=0$, we get from \eqref{intdbarb} that
for some absolute constant $C$
$$
\|\varphi\|_{W^{1,2}(\D)}\leq \|s_\varepsilon\|_{W^{1,2}(\D)}\le \varepsilon C\|\alpha\|_{L^2(\D)}
\leq C\|\alpha\|_{L^2(\D)}=:M.
$$
Thus,  $G_\alpha$ indeed has a fixed point,  which settles the case
$r=2$ and $\alpha\in W^{1,2}(\D)\cap L^\infty(\D)$. 

Next, we relax our restriction on $\alpha$ and assume only 
that it belongs to $L^2(\D)$. 
Let $(\alpha_n)$ be a sequence in $\mathcal{D}(\D)$ that
converges to $\alpha$ in $L^2(\D)$. 
By the first part of the proof, there is a sequence 
$(s_n)\subset W^{1,2}(\D)$ such that $\mbox{\rm Im}\, \tr_{\T}s_n=0$ and
$\int_\T \mbox{\rm Re}\,\tr_{\mathbb T}\, s_n=0$,
satisfying 
$\overline{\d}e^{s_n}=\alpha_n\overline{e^{s_n}}$ as well as
({\it cf.} \eqref{intdbarb}) 
\begin{equation}
\label{limsn}
\|s_n\|_{W^{1,2}(\D)}\le C\|\alpha_n\|_{L^2(\D)}\leq C'.
\end{equation}
By the Rellich--Kondrachov theorem we can find a subsequence, again denoted 
by $(s_n)$, converging pointwise and in all $L^q(\D)$, 
$1\le q<\infty$ to some function $s$.
By dominated convergence, 
the functions $\bar\partial s_n=\alpha_n{e^{-2i{\rm Im}\, s_n}}$ 
converge to 
$\alpha{e^{-2i{\rm Im}\, s}}$ in $L^2(\D)$. Thus, applying
\eqref{intdbar} with $A=s_n-s_m$, 
$a=\bar\partial s_n-\bar\partial s_m$, 
$\theta_0=-\pi/2$, $\psi\equiv0$, and $\lambda=0$, 
we conclude that $(s_n)$ is a Cauchy sequence in $W^{1,2}(\D)$  
which must therefore converge to $s$. 
Hence  $s\in W^{1,2}(\D)$,
$\mbox{\rm Im}\, \tr_{\T}s=0$, $\int_\T \mbox{\rm Re}\,s=0$, and 
$\bar\partial s=\alpha e^{-2i{\rm Im}\,s}$.
By Lemma~\ref{arglisse}, this establishes existence of $s$ when $r=2$.
Suppose finally that $\alpha\in L^r(\D)$ for some $r>2$. 
{\it A fortiori} $\alpha\in L^2(\D)$,  so by what precedes
there is $s\in W^{1,2}(\D)$ 
such that $\mbox{\rm Im}\, \tr_{\T}s=0$, $\int_\T \mbox{\rm Re}\,s=0$, and
$\overline{\d}e^{s}=\alpha\overline{e^{s}}$. To see that in fact
 $s\in W^{1,r}(\D)$, we apply Proposition~\ref{embexpS} to get 
$\bar\partial s=\alpha e^{-2i{\rm Im}\,s}=:a\in L^r(\D)$.
Then, equation \eqref{intdbar} implies that $s$ is the unique 
function $A\in W^{1,r}(\D)$ satisfying 
$\mbox{\rm Im}\, \tr_{\T}A=0$, $\int_\T \mbox{\rm Re}\,A=0$, and
$\overline{\d}A=a$.\hfill$\square$

\subsubsection{Uniqueness part}
\label{uniqueness}
In this subsection we establish uniqueness of $s$ in the conditions of Theorem~\ref{BNpar}.
Clearly, it is enough to consider $r=2$.  
Consider two functions $w_1=e^{s_1}F$ and $w_2=e^{s_2}F$ meeting
\eqref{eq:w} on $\D$ with $s_j\in W^{1,2}(\D)$,
$\tr_\T{\rm Im}\,s_j=\psi$, and  $\int_\T {\rm Re}\,s_j=\lambda$ for $j=1,2$.
We define
$$
s(z):=s_1(z)-s_2(z)\in W^{1,2}(\D)
$$
and we  must prove that $s\equiv0$.  First we 
estimate the $\bar\partial$-derivative of $s$:

\begin{lem}
\label{lem1S}
There is a constant $C>0$ such that, for a.e.  $z\in\D$, we have
\begin{equation}
\label{Gronc}
|\bar\d s(z)|\leq C|{\rm Im}~ s(z)|\, |\alpha(z)|. 
\end{equation}
\end{lem}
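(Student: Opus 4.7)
The plan is to directly compute $\bar\partial s$ using the characterization of $s_1,s_2$ from Lemma~\ref{arglisse}, then exploit the Lipschitz character of $\theta\mapsto e^{-2i\theta}$.

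First I would invoke Lemma~\ref{arglisse} with $\beta:=\alpha\overline{F}/F$: since $w_j=e^{s_j}F$ solves \eqref{eq:w} and $s_j\in W^{1,2}(\D)$, each $s_j$ satisfies
\[
\bar\partial s_j=\beta\,e^{-2i\,\text{\rm Im}\,s_j}\quad\text{a.e. on }\D,\qquad j=1,2.
\]
Since $F$ is holomorphic and not identically zero, its zero set is discrete and in particular of planar measure zero, so $\beta$ is defined a.e. and $|\beta(z)|=|\alpha(z)|$ for a.e. $z\in\D$.

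Subtracting the two identities I would get, for a.e. $z\in\D$,
\[
\bar\partial s(z)=\beta(z)\bigl(e^{-2i\,\text{\rm Im}\,s_1(z)}-e^{-2i\,\text{\rm Im}\,s_2(z)}\bigr).
\]
Factoring out $e^{-2i\,\text{\rm Im}\,s_2(z)}$ (of modulus $1$) and using the elementary bound $|e^{i\tau}-1|\le|\tau|$ for real $\tau$, together with $\text{\rm Im}\,s=\text{\rm Im}\,s_1-\text{\rm Im}\,s_2$, I obtain
\[
\bigl|e^{-2i\,\text{\rm Im}\,s_1(z)}-e^{-2i\,\text{\rm Im}\,s_2(z)}\bigr|
=\bigl|e^{-2i\,\text{\rm Im}\,s(z)}-1\bigr|\le 2|\text{\rm Im}\,s(z)|.
\]
Combining these inequalities with $|\beta|=|\alpha|$ a.e. yields \eqref{Gronc} with $C=2$.

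I do not expect any real obstacle here: the content is entirely contained in Lemma~\ref{arglisse} and an elementary inequality. The only mild point worth mentioning is the justification that $\beta$ and $s_j$ are pointwise well defined a.e., which is immediate from the fact that $F$ has discrete zero set and $s_j\in W^{1,2}(\D)$ admits a strictly defined representative outside a set of $B_{1,2}$-capacity zero (hence of measure zero), as recalled in Section~\ref{sec:notations_generales}.
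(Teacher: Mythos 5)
Your proposal is correct and follows essentially the same route as the paper: apply Lemma~\ref{arglisse} to both $s_1$ and $s_2$ with $\beta=\alpha\overline{F}/F$, subtract to get $\bar\partial s=\beta e^{-2i\,\mathrm{Im}\,s_2}\bigl(e^{-2i\,\mathrm{Im}\,s}-1\bigr)$, and bound the unimodular difference by $2|\mathrm{Im}\,s|$. The paper's one-line proof is exactly this computation (factoring out $e^{-2i\,\mathrm{Im}\,s_1}$ instead, which is immaterial), so no further comment is needed.
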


\begin{proof} Setting $\beta:=\alpha \bar F/F$ and 
using again Lemma~\ref{arglisse}, we find that
$\bar \d s_j=\beta e^{-2i\,\text{Im}\,s_j}$. Hence, 
$\bar\d s=\beta e^{-2i\,\text{Im}\,s_1}
\bigl(1-e^{2i\,{\rm Im}s}\bigr)$, and \eqref{Gronc} follows at once.
\end{proof}

Next, we extend the function $s$ outside of  
$\overline{\D}$ by reflection: 
\begin{equation}
\label{defsref}
s(z):=\overline{s\left({1/ \bar z}\right)},\qquad z\in\C\setminus\overline{\D},
\end{equation}
Observe that since $s$ is real-valued on $\T$, this extension 
makes $s\in W^{1,2}_{loc}(\C)$, see, for example \cite[Theorem 2.54]{Demengel}.

\begin{lem}
\label{lem2S}
There is a constant $C>0$ such that, for a.e. $z\in\C\setminus\overline{\D}$,
\begin{equation}
\label{ineglogders}
|\bar\d s(z)|\le 
C \frac{\bigl|{\rm Im}\,s(z)\bigr|}{|z|^2}\,|\alpha(1/\bar z)|. 
\end{equation}
\end{lem}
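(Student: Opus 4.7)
The plan is to reduce the estimate outside the disk to the interior estimate from Lemma~\ref{lem1S} via the reflection map $z \mapsto 1/\bar z$. Since $s \in W^{1,2}_{loc}(\C)$ by the reflection and the fact that $s$ is real on $\T$, we can compute $\bar\partial s$ on $\C\setminus\overline{\D}$ pointwise a.e.\ by composition and conjugation.

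First I would compute $\bar\partial_z s(z)$ for $z \in \C \setminus \overline{\D}$ directly from \eqref{defsref}. Write $u=1/\bar z$ so that $s(z)=\overline{s(u)}$; since $\bar\partial(\overline{g})=\overline{\partial g}$, it suffices to compute $\partial_z[s(1/\bar z)]$. Using the chain rule with $\partial_z u = 0$ and $\partial_z \bar u = \partial_z(1/z) = -1/z^2$, one gets
\begin{equation*}
\partial_z[s(1/\bar z)] = (\bar\partial s)(1/\bar z)\cdot\left(-\frac{1}{z^2}\right),
\end{equation*}
so, taking conjugates,
\begin{equation*}
\bar\partial_z s(z) = -\frac{1}{\bar z^{\,2}}\,\overline{(\bar\partial s)(1/\bar z)},\qquad |\bar\partial_z s(z)| = \frac{|(\bar\partial s)(1/\bar z)|}{|z|^2}.
\end{equation*}

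Next I would observe that $\mbox{Im}\,s(z)=-\mbox{Im}\,s(1/\bar z)$ by \eqref{defsref}, so $|\mbox{Im}\,s(1/\bar z)|=|\mbox{Im}\,s(z)|$. Since $1/\bar z \in \D$, Lemma~\ref{lem1S} applies and yields
\begin{equation*}
|(\bar\partial s)(1/\bar z)| \le C\,|\mbox{Im}\,s(1/\bar z)|\,|\alpha(1/\bar z)| = C\,|\mbox{Im}\,s(z)|\,|\alpha(1/\bar z)|
\end{equation*}
for a.e.\ $z$ in $\C\setminus\overline{\D}$. Combining the two displays gives \eqref{ineglogders}.

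The only delicate point is justifying the pointwise chain-rule computation in the Sobolev setting, but this is standard: $z\mapsto 1/\bar z$ is a smooth diffeomorphism of $\C\setminus\{0\}$ and $s\in W^{1,2}_{loc}(\C)$, so the identity $\bar\partial_z s(z) = -\bar z^{-2}\,\overline{(\bar\partial s)(1/\bar z)}$ holds for a.e.\ $z \in \C\setminus\overline{\D}$ by the Sobolev change of variables (equivalently, one verifies it first on smooth approximants of $s$ and passes to the limit in $L^2_{loc}$). No deeper ingredient is required.
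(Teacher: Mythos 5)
Your proof is correct and follows essentially the same route as the paper: apply the chain rule to the reflection $z\mapsto 1/\bar z$ to get $|\bar\partial s(z)|=|(\bar\partial s)(1/\bar z)|/|z|^2$, note $|{\rm Im}\,s(1/\bar z)|=|{\rm Im}\,s(z)|$ from \eqref{defsref}, and invoke Lemma~\ref{lem1S} at the reflected point. The closing remark justifying the a.e.\ chain rule via the smooth diffeomorphism is the same implicit justification the paper relies on, so nothing is missing.
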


\begin{proof}
Putting $\zeta={1/\bar z}=:U(z)$ and applying the chain rule, we get
(since $\partial f=0$) that
$$
\d\left(s\left({1/\bar z}\right)\right)=
\left((\d_{\bar\zeta}~s)\circ U\right)\d\bar U=
-\frac{1}{z^2}\bar\d s\left({1/\bar z}\right).
$$
Thus, 
$$
\bar\d s(z)=\overline{\d\left(s\left({1/\bar z}\right)\right)}=-\overline{\Bigl(\frac{\bar\d s({1/\bar z})}{z^2}\Bigr)},
\qquad |z|>1,
$$
and applying Lemma~\ref{lem1S} gives  \eqref{ineglogders} in view of
\eqref{defsref}.
\end{proof}

From the two previous lemmas we derive the inequality: 
\begin{equation}
\label{doms}
|\bar\d s(z)|\leq C\frac{|\text{Im}\,s(z)|}{1+|z|^2}\,|\alpha(Q(z))|,\qquad
a.e.\ z\in \C,
\end{equation}
where $Q(z)$ is equal to $z$ if $|z|\le 1$ and to $1/\bar z$ otherwise.
Since $\alpha\in L^2(\D)$, it follows from  \eqref{doms} and the change of 
variable formula that
$\bar \partial s/s\in L^2(\C)$. 

Recall now definition \eqref{defCauchy2}. We 
introduce two auxiliary functions $\psi,\phi$ on $\C$:
\begin{equation}
\label{defpsiphi}
\psi:=\mathcal{C}_2\bigl(\bar\partial s/s\bigr),\qquad
\phi:=\exp(-\psi).
\end{equation}

Since $\bar\d s/s\in L^2(\C)$, we know that
$\psi\in W^{1,2}_{loc}(\C)$ with $\bar\partial\psi=\bar\d s/s$.
Consider  the function $s\phi$ on $\C$. By
Proposition~\ref{embexpS},
we compute from \eqref{defpsiphi} using the Leibniz rule that
$\bar\partial(s\phi)=0$, hence $s\phi$ is an entire function.
We claim that
\begin{equation}
\liminf_{R\to+\infty}\Bigl(\frac 1 R\int_{\T_R}\log^+|s\phi(\xi)|\,|d\xi|-\frac 12\log R\Bigr)<0.
\label{liu}
\end{equation}
Indeed, taking into account \eqref{defsref} and the fact that
$s_{|\D}\in L^\ell(\D)$ for all  $1\le \ell<\infty$ by the Sobolev 
embedding theorem, we get  from Jensen's inequality  upon choosing
$\ell>48\pi$ that
\begin{multline}
\label{Jensen}
\frac{1}{R^2}\int_{R<\rho<2R}\int_{0<\theta<2\pi}\log^+|s(\rho e^{i\theta})|\,
\rho\,d\rho\,d\theta
\\ \le \frac {12\pi}\ell\frac{4R^2}{3\pi}\int_{1/(2R)<\rho<1/R}\int_{0<\theta<2\pi}
\log^+(|s(\rho e^{i\theta})|^\ell)\,\rho\,d\rho\,d\theta
\\ \le
\frac{12\pi}\ell\log\Bigl[\frac{4R^2}{3\pi}\int_{1/(2R)<\rho<1/R}\int_{0<\theta<2\pi}
\max\{1,|s(\rho e^{i\theta})|^\ell\}\,\rho\,d\rho\,d\theta\Bigr]
\\ \le \frac{1}{\delta}\log R+C
\end{multline}
for some $\delta>2$ and  some $C>0$,  whenever $R\ge 1$.

In another connection, it follows from \eqref{estdRC2} and the Schwarz 
inequality that
\begin{multline}
\label{estcourpsi}
\frac{1}{\pi R^2}\int_{R<\rho<2R}\int_{0<\theta<2\pi}|\psi(\rho e^{i\theta})|\,
\rho\,d\rho\,d\theta\leq 
\frac{\|\psi\|_{L^2(\D_{2R})}}{\sqrt{\pi} R}\\=
O\left((\log R)^{1/2}\right),
\quad R\to+\infty.
\end{multline}
Since $\log^+|s\phi|\le \log^+|s|+|\psi|$, claim \eqref{liu} 
easily follows 
from \eqref{Jensen} and \eqref{estcourpsi}.

Since $\log |s\phi|$ is subharmonic on $\C$, for 
$|z|<R$ we have 
\begin{multline*}
2\pi \log|s\phi(z)|\leq\frac{R+|z|}{R-|z|}
\int_0^{2\pi}\log^+\left|s\phi(Re^{i\theta})\right|\,d\theta
\\ \le\frac{R+|z|}{R-|z|}\,\frac{1}{R}\int_{\T_R}\log^+|s\phi(\xi)|\,|d\xi|
\end{multline*}
(see \cite[Theorem 2.4.1]{Ransford}), so by \eqref{liu}
there is a sequence $\rho_n\to+\infty$  for which 
$\sup_{\T_{\rho_n}} |s\phi|=O(\rho_n^{1/2})$.
Therefore, by an easy modification of  Liouville's theorem, $s\phi$ must be
 a constant.

More generally, \eqref{doms} remains valid if we replace $s$ by $s-a$ 
for $a\in\R$, entailing that
\begin{equation}
\label{domsa}
\left|\frac{\bar\d s(z)}{s(z)-a}\right|\leq C
\left|\frac{\alpha(Q(z))}{ 1+|z|^2}\right|\in L^2(\C),\qquad
a.e.\ z\in \C,\quad a\in\R.
\end{equation}
Thus, reasoning as before, we deduce that 
there is a complex-valued function $b$ such that
\begin{equation}
\label{x12}
(s(z)-a)\phi_a(z)\equiv b(a),\qquad a\in\R,
\end{equation}
with
$$
\psi_a:=\mathcal{C}_2\left(\bar\d s/(s-a)\right),\quad \phi_a:=\exp(-\psi_a).
$$
Fix $R>1$. By \eqref{domsa}, 
Corollary~\ref{equicontC2}, and 
Proposition~\ref{embexpS}, 
the sets 
$\{\phi_a|_{\D_R}\}_{a\in\R}$ and $\{\phi^{-1}_a|_{\D_R}\}_{a\in\R}$ are 
bounded in 
$W^{1,q}(\D_R)$ for $q\in[1,2)$, hence also in $L^2(\T)$
by the trace and the Sobolev embedding theorems. 
Fix $A>0$ such that $\Lambda(\{\xi\in\T:|s(\xi)|\le A\})=\lambda>0$.
For  each $\delta>0$ we can cover the interval $[-A,A]$ by $N\leq A/\delta+1$
open intervals of length $2\delta$, hence there exists 
$a=a(\delta)\in[-A,A]$ with
$\Lambda(E_a)\ge \lambda \delta/(A+\delta)$, where
$E_a=\{\xi\in\T:|s(\xi)-a|\le \delta\}$. (We use here that $s$ is real-valued on $\T$.)
Moreover, we observe from \eqref{x12} and the Schwarz inequality that
$$
|b(a)|\Lambda(E_a)= \int_{E_a}|s(\xi)-a||\phi_a(\xi)|\,d\Lambda(\xi)\le 
\delta\|\phi_a\|_{L^2(\T)} \Lambda(E_a)^{1/2}.
$$ 
This lower bound on $\Lambda(E_a)$ now gives us that 
$$
|b(a)|\le\delta^{1/2}\sqrt{(A+\delta)/\lambda}\,\sup_{|a|\le A}\|\phi_a\|_{L^2(\T)},
$$
implying that 
$b(a(\delta))\to 0$ as $\delta\to 0$. By compactness, we 
can pick a sequence $\delta_n\to0$ 
such that
$a_n:=a(\delta_n)\to c\in[-A,A]$. Considering the equalities
$s-a_n=b(a_n)\phi^{-1}_{a_n}$
and taking into account the boundedness of $\{\phi^{-1}_{a_n}\}$ in 
$L^2(\D_R)$, we find  that $s\equiv c$ on $\D_R$. Since $R$ is arbitrary, 
$s$ is constant on $\C$, and actually $s\equiv0$ because $\int_\T s=0$. 
\hfill$\square$

A similar argument gives the following result which seems to be of independent interest.

\begin{thm}
\label{x16}
If 
$s\in W^{1,2}_{loc}(\C)$ satisfies 
$$
|\bar\d s(z)|\le |{\rm Im}\,s(z)| \,g(z)
$$
for some non-negative function  $g\in L^2(\C)$, and if  
$$
\int_{\C\setminus\D}\frac{|s(\xi)|^\ell\,{\rm d}\xi\wedge{\rm d}\bar\xi}{|\xi|^4}<\infty,
$$
for some $\ell>48\pi$,
then 
${\rm Im}\,s$ is of constant sign a.e. in $\C$.
\end{thm}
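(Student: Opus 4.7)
The plan is to adapt the Liouville-type argument of Subsection~\ref{uniqueness}. Since $|\bar\d s|\le g|{\rm Im}\,s|\le g|s|$, the ratio $h:=\bar\d s/s$ (with the convention $0/0=0$) lies in $L^2(\C)$. Set $\psi:=\mathcal{C}_2(h)\in W^{1,2}_{loc}(\C)$, so $\bar\d\psi=h$, and let $\phi:=e^{-\psi}$. Using Proposition~\ref{embexpS} to justify the Leibniz rule, $\bar\d(s\phi)=(\bar\d s-hs)\phi=0$, so $F:=s\phi$ is entire.

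I would next show $F$ is constant by a growth estimate. Following the Jensen bound \eqref{Jensen}, the hypothesis $\int_{\C\setminus\D}|s|^\ell/|\xi|^4\,dm<\infty$ with $\ell>48\pi$ yields
\[
\frac{1}{R^2}\int_{R<|\xi|<2R}\log^+|s|\,dm\le\frac{6\pi}{\ell}\log R+O(1)<\frac{1}{8}\log R+O(1),
\]
while \eqref{estdRC2} and the Schwarz inequality give $\int_{R<|\xi|<2R}|\psi|\,dm=O(R^2\sqrt{\log R})$. Hence $R^{-2}\int_{R<|\xi|<2R}\log^+|F|\,dm\le\tfrac{1}{8}\log R+O(\sqrt{\log R})$, and the mean value theorem produces a sequence $\rho_n\to\infty$ with $\rho_n^{-1}\int_{\T_{\rho_n}}\log^+|F|\,|d\xi|=o(\log\rho_n)$. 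The Poisson--Jensen bound for the subharmonic function $\log|F|$ (used exactly as in Subsection~\ref{uniqueness}) then gives $|F(z)|=O(|z|^{\beta})$ for some $\beta<1$ along the same sequence, so by Liouville $F\equiv c$ for some constant $c$.

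If $c=0$ then $s\equiv 0$ a.e.\ and ${\rm Im}\,s\equiv 0$, hence of constant sign. Otherwise $s=ce^\psi$ a.e.; writing $\psi=\psi_1+i\psi_2$ and setting $\eta:=\psi_2+\arg c$, we have ${\rm Im}\,s=|c|e^{\psi_1}\sin\eta$, so the sign of ${\rm Im}\,s$ equals that of $\sin\eta$. From $|\bar\d\psi|=|h|\le g|{\rm Im}\,s|/|s|=g|\sin\eta|$ and the real-valuedness of $\eta$, we obtain
\[
|\nabla\eta|=2|\bar\d\eta|\le 2|\bar\d\psi|\le 2g|\sin\eta|\qquad \text{a.e. on }\C.
\]

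It remains to show $\sin\eta$ has constant sign. By the ACL characterization of $W^{1,2}_{loc}$ and Fubini, for a.e.\ $y_0\in\R$ the function $\eta(\cdot,y_0)$ is absolutely continuous, $g(\cdot,y_0)\in L^2(\R)$, and $|\partial_x\eta(x,y_0)|\le 2g(x,y_0)|\sin\eta(x,y_0)|$ for a.e.\ $x$. Setting $\varphi(x):=\sin\eta(x,y_0)$, this gives $|\varphi'(x)|\le 2g(x,y_0)|\varphi(x)|$, and Gronwall's inequality forces $\varphi\equiv 0$ as soon as $\varphi$ vanishes at a single point. Hence on each good horizontal---respectively, vertical---line, $\sin\eta$ is either identically zero or strictly of constant sign. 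If both $\{\sin\eta>0\}$ and $\{\sin\eta<0\}$ had positive planar measure, Fubini would produce good $y_1,y_2\in\R$ with $\sin\eta(\cdot,y_1)>0$ and $\sin\eta(\cdot,y_2)<0$ everywhere on $\R$; then for any good $x_0$, the function $y\mapsto\sin\eta(x_0,y)$ would take both signs, contradicting the dichotomy on the vertical line $\{x_0\}\times\R$. The main difficulty is really this last step (Fubini plus Gronwall), the first half being a near-verbatim replay of Subsection~\ref{uniqueness}; the exponent $\ell>48\pi$ is precisely what keeps $\tfrac{6\pi}{\ell}\log R$ strictly below $\tfrac{1}{2}\log R$ after adding the $O(\sqrt{\log R})$ contribution from $\psi$.
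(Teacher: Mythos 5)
Your first half is sound and is essentially the paper's own argument from Subsection~\ref{uniqueness} run with the single shift $a=0$: since $|\bar\d s|\le g|{\rm Im}\,s|\le g|s|$ you get $h:=\bar\d s/s\in L^2(\C)$, the function $F=s\,e^{-\mathcal{C}_2(h)}$ is entire, and the hypothesis $\int_{\C\setminus\D}|s|^\ell|\xi|^{-4}\,dm<\infty$ together with \eqref{estdRC2} reproduces \eqref{Jensen}--\eqref{estcourpsi} and forces $F\equiv c$. The problem is the second half. From $s=ce^\psi$ you correctly get $|\bar\d\psi|\le g|\sin\eta|$ with $\eta={\rm Im}\,\psi+\arg c$, but the next inequality, $|\nabla\eta|=2|\bar\d\eta|\le 2|\bar\d\psi|$, is false: $\bar\d({\rm Im}\,\psi)$ is not dominated by $|\bar\d\psi|$, because $\bar\d({\rm Im}\,\psi)\neq{\rm Im}(\bar\d\psi)$. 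Already $\psi(z)=z$ gives $\bar\d\psi\equiv0$ while $\nabla({\rm Im}\,\psi)\equiv(0,1)$. What the hypotheses control pointwise is only the anti-holomorphic part of $\nabla\psi$; the other half, $\d\psi=\mathcal{B}(h)$, is a Beurling transform and is controlled only in $L^2(\C)$, not pointwise by $g|\sin\eta|$. Consequently the differential inequality $|\partial_x\eta|\le 2g|\sin\eta|$ on almost every line is unjustified, the Gronwall dichotomy (``$\sin\eta$ identically zero or of constant strict sign on each good line'') collapses, and with it the whole sign argument. (A secondary, fixable, slip: even granting the dichotomy, knowing $\sin\eta(\cdot,y_1)>0$ on a single horizontal line says nothing about the vertical AC representative at the single point $(x_0,y_1)$; one must work with positive-measure families of good lines, not two individual ones.)

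The paper's route avoids any pointwise gradient bound. Since ${\rm Im}(s-a)={\rm Im}\,s$ for every real $a$, the hypothesis gives $|\bar\d(s-a)|\le g\,|{\rm Im}(s-a)|$ uniformly in $a$, so the Liouville step applies to every shift and yields the family of identities $(s-a)\phi_a\equiv b(a)$, $a\in\R$, as in \eqref{x12}. Assuming ${\rm Im}\,s$ changes sign, one derives from these identities, via the integrability of $\phi_a^{-1}$ and the John--Nirenberg theorem for $s\in W^{1,2}_{loc}$, the upper bounds $m\{\xi\in\D:|{\rm Im}\,s|<\delta,\ |{\rm Re}\,s|<1/\delta\}\le c\,\delta^3/\inf_{a\in\R}|b(a)|^5$ and $m\{|s|>1/\delta\}\le c\,\delta^3$, while the sign change forces the lower bound $m\{|{\rm Im}\,s|<\delta\}\ge c\,\delta^2$ (a truncation/H\"older argument). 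Letting $\delta\to0$ gives $\inf_a|b(a)|=0$, and then, as in the uniqueness proof, $s-a_n=b(a_n)\phi_{a_n}^{-1}$ with $\phi_{a_n}^{-1}$ bounded in $L^2_{loc}$ shows $s$ is a real constant, contradicting the sign change. So the missing idea in your proposal is precisely the use of the whole family of shifts $a\in\R$ and the measure-theoretic comparison of level sets; the one-parameter identity \eqref{x12} cannot be replaced by an ODE-along-lines argument, because no pointwise bound on $\nabla({\rm Im}\,\psi)$ follows from the data.
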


The example $s(z)=i+(1+|z|)^{-\beta}$, $\beta>0$ shows that, under these conditions, $s$ is not necessarily a constant. On the other hand, the value $48\pi$ 
is not necessarily sharp.

It is interesting to compare this result to known 
Liouville-type theorems like 
\cite[Proposition 3.3]{ap} and \cite[Theorem 8.5.1]{aim}.

\begin{proof}[Sketch of proof] 
For any real $d$, \eqref{x12} gives us for small $\delta>0$ that 
$m\{\xi\in\D:|{\rm Im}\,s(\xi)|<\delta,\,|{\rm Re}\,s(\xi)-d|<\delta\}\le c \delta^5/\inf_{a\in\R}|b(a)|^5$ with $c$ independent of $d$; 
therefore,   
$m\{\xi\in\D:|{\rm Im}\,s(\xi)|<\delta,\,|{\rm Re}\,s(\xi)|<1/\delta\}\le c \delta^3/\inf_{a\in\R}|b(a)|^5$. 
Since $s\in W^{1,2}_{loc}(\C)$, by the John--Nirenberg theorem we have
$m\{\xi\in\D:|s(\xi)|>1/\delta\}\le c \delta^3$. Finally, if ${\rm Im}\,s$ changes sign in $\D$, then by the H\"older inequality we obtain 
$m\{\xi\in\D:|{\rm Im}\,s(\xi)|<\delta\}\ge c \delta^2$. As a result, passing to the limit $\delta\to 0$, we obtain that $\inf_{a\in\R}|b(a)|=0$.
\end{proof}

\subsection{Proof of Corollary \ref{BNreal}}
\label{preuvecorim}
Uniqueness of $s$ is established as in Theorem~\ref{BNpar}, except that the 
right hand side of \eqref{defsref} now has a minus sign because $s$ is pure 
imaginary on $\T$. Note also
that \eqref{intdbarb} holds by Lemma~\ref{expsf2}~(ii) applied with $\theta_0=0$.

Passing to existence of $s$, the argument given early in subsection 
\ref{existence} applies with obvious 
modifications to show that we may assume $\psi=0$, $\lambda=0$ and $F\equiv1$.
Moreover, it is enough to  prove the result when $r=2$ and 
$\alpha\in \mathcal{D}(\D)$, for then the passage to $\alpha\in L^2(\D)$ and, subsequently, to $r>2$ is like in the theorem. 

So, let us put $r=2$, fix $\alpha\in \mathcal{D}(\D)$, and write
$s(\psi,\lambda,F)$ to emphasize the dependance on $\psi$, $\lambda$ and $F$ 
of the function $s\in W^{1,2}(\D)$ whose existence and uniqueness 
is asserted by Theorem~\ref{BNpar}. For $u\in W^{1/2,2}_{\R}(\T)$, we
denote by $E(u)\in W^{1,2}_\R(\D)$ the harmonic extension of $u$, 
{\it i.e.} $E(u)$ is harmonic and $\tr_\T\,E(u)=u$. We put
$H(u)\in W^{1,2}(\D)$ for the holomorphic function such that
$\text{Im}\,H(u)=E(u)$ and $\int_\T \text{Re}\,H(u)=0$.
Observe that $\alpha \exp(-2i E(u))$ lies in $W^{1,2}(\D)\cap L^\infty(\D)$; therefore, the
operator $G_{\alpha e^{-2i E(u)}}$ defined by \eqref{defGalpha} is 
compact from $W^{1,2}_\R(\D)$ into itself by Lemma~\ref{Galphacomp}.
In the course of the proof of Theorem \ref{BNpar}, we showed  that 
it has a unique  fixed point which is none but 
$\text{Im}\,s(0,0,e^{H(u)})=:\mathcal{F}(u)$.
Furthermore, by \eqref{intdbarb}, we have  
$\|\mathcal{F}(u)\|_{W^{1,2}(\D)}\leq C\|\alpha\|_{L^2(\D)}$ 
for some absolute constant $C$. 

\begin{lem}
\label{contFP}
The {\rm(}nonlinear{\rm)} operator 
$u\mapsto \mathcal{F}(u)$ is compact from \newline\noindent $W^{1/2,2}_{\R}(\T)$ into 
$W^{1,2}_{\R}(\D)$.
\end{lem}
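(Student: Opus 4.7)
The plan is to produce a uniform bound for $\mathcal{F}(u)$ in $W^{2,2}(\D)$ as $u$ ranges over any bounded subset of $W^{1/2,2}_\R(\T)$, then invoke the compact Sobolev embedding $W^{2,2}(\D)\hookrightarrow W^{1,2}(\D)$ to conclude relative compactness of the image. Continuity of $\mathcal{F}$ will be deduced from this uniform bound together with the uniqueness of the fixed point defining $\mathcal{F}(u)$.

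For the uniform $W^{2,2}$-bound I would use the fixed-point identity $\mathcal{F}(u)=G_{\alpha e^{-2iE(u)}}(\mathcal{F}(u))$: by definition \eqref{defGalpha} this reads $\mathcal{F}(u)=4P\bigl(\text{Im}\,\partial(\alpha e^{-2iE(u)}e^{-2i\mathcal{F}(u)})\bigr)$, where $P$ is the potential operator from the proof of Lemma \ref{Galphacomp}, bounded from $L^2_\R(\D)$ into $W^{2,2}_\R(\D)$. Expanding the derivative by the Leibniz rule and using that the exponential factors have modulus one, one is reduced to estimating $\|\partial\alpha\|_{L^2(\D)}$, $\|\alpha\,\partial E(u)\|_{L^2(\D)}$, and $\|\alpha\,\partial\mathcal{F}(u)\|_{L^2(\D)}$. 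Because $\alpha\in\mathcal{D}(\D)$ is supported in some $\overline{\D}_\rho$ with $\rho<1$, interior estimates for the harmonic function $E(u)$ give $\|\alpha\,\partial E(u)\|_{L^2(\D)}\le C\|E(u)\|_{W^{1,2}(\D)}\le C'\|u\|_{W^{1/2,2}(\T)}$, while the a priori bound $\|\mathcal{F}(u)\|_{W^{1,2}(\D)}\le C\|\alpha\|_{L^2(\D)}$ from Theorem \ref{BNpar} handles the last term. Altogether, $\|\mathcal{F}(u)\|_{W^{2,2}(\D)}\le C(\alpha)\bigl(1+\|u\|_{W^{1/2,2}(\T)}\bigr)$, and the Rellich--Kondrachov theorem yields relative compactness of $\mathcal{F}(B)$ in $W^{1,2}(\D)$ for every bounded $B\subset W^{1/2,2}_\R(\T)$.

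For continuity, let $u_n\to u$ in $W^{1/2,2}_\R(\T)$. By the uniform bound just obtained and Rellich--Kondrachov, every subsequence of $(\mathcal{F}(u_n))$ admits a further subsequence $(\mathcal{F}(u_{n_k}))$ converging to some $\varphi_*$ in $W^{1,2}(\D)$. Since the harmonic extension $E$ is continuous from $W^{1/2,2}_\R(\T)$ to $W^{1,2}_\R(\D)$, we have $E(u_n)\to E(u)$ there; combined with the compact support of $\alpha$ and interior harmonic regularity, $\alpha e^{-2iE(u_n)}\to\alpha e^{-2iE(u)}$ in $W^{1,2}(\D)\cap L^\infty(\D)$. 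A routine joint-continuity argument based on the Leibniz rule and dominated convergence then shows that $G_{\alpha e^{-2iE(u_{n_k})}}(\mathcal{F}(u_{n_k}))\to G_{\alpha e^{-2iE(u)}}(\varphi_*)$ in $W^{1,2}(\D)$, so passing to the limit in the fixed-point identity yields $\varphi_*=G_{\alpha e^{-2iE(u)}}(\varphi_*)$. By uniqueness of the fixed point established in the proof of Theorem \ref{BNpar}, $\varphi_*=\mathcal{F}(u)$, and since every subsequence of $(\mathcal{F}(u_n))$ has a further subsequence converging to the same limit, the whole sequence converges to $\mathcal{F}(u)$ in $W^{1,2}(\D)$.

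The main difficulty lies in the nonlinear term $e^{-2iE(u)}$: since $E(u)$ is only in $W^{1,2}(\D)$, its exponential belongs merely to $W^{1,q}(\D)$ for $q<2$ by Proposition \ref{embexpS}, which would be insufficient to push $\mathcal{F}(u)$ into $W^{2,2}$. What saves the argument is the compact support of $\alpha$ in $\D$, which confines the exponential to a region where $E(u)$ is $C^\infty$-smooth by interior harmonic regularity and where it depends smoothly on the boundary datum $u$.
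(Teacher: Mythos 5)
Your argument is correct and essentially the paper's own: both proofs rest on the identity $G_{\alpha e^{-2iE(u)}}(\varphi)=G_{\alpha}(E(u)+\varphi)$ (yours appears implicitly when you expand the kernel through the potential operator $P$), on the $L^2(\D)\to W^{2,2}(\D)$ boundedness of $P$ from Lemma~\ref{Galphacomp}, on the a priori bound $\|\mathcal{F}(u)\|_{W^{1,2}(\D)}\le C\|\alpha\|_{L^2(\D)}$, on the Rellich--Kondrachov theorem, and on uniqueness of the fixed point to identify the limit of subsequences. Only your closing diagnosis is slightly off --- what places $\partial\bigl(\alpha e^{-2i(E(u)+\mathcal{F}(u))}\bigr)$ in $L^2(\D)$ is not the compact support of $\alpha$ and interior harmonic regularity, but simply that $E(u)+\mathcal{F}(u)$ is real-valued (so the exponential is unimodular) combined with $\alpha\in W^{1,2}(\D)\cap L^\infty(\D)$ and the Leibniz rule \eqref{CR2}, exactly as in Lemma~\ref{Galphacomp}; this does not affect the validity of your estimates.
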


\begin{proof} Pick 
a sequence $(u_n)$ converging to $u$ in $W^{1/2,2}(\T)$. 
By elliptic regularity, $H(u_n)$ converges to $H(u)$ in $W^{1,2}(\D)$, 
and in particular
$\|H(u_n)+\mathcal{F}(u_n)\|_{W^{1,2}(\D)}$ 
is bounded independently of $n$.
Besides,  
by \eqref{defGalpha} and the definition of $\mathcal{F}$, we  see that
\begin{equation}
\label{shiftmultarg}
\mathcal{F}(u_n)=G_{\alpha e^{-2i E(u_n)}}(\mathcal{F}(u_n))
=G_{\alpha}(E(u_n)+\mathcal{F}(u_n));
\end{equation}
hence, Lemma~\ref{Galphacomp} implies that  $(\mathcal{F}(u_n))_{n\in\NN}$ is
relatively compact in $W^{1,2}_\R(\D)$. 
Let some subsequence, again denoted by $(\mathcal{F}(u_n))$,
converges to $\varphi$ in $W^{1,2}_\R(\D)$. 
Then $(E(u_n)+\mathcal{F}(u_n))$ converges to $E(u)+\varphi$ in  
$W^{1,2}_\R(\D)$,
therefore by \eqref{shiftmultarg} and the continuity of $G_\alpha$ we obtain 
that $\varphi=G_{\alpha e^{-2i E(u)}}(\varphi)$. This means that 
$\varphi=\mathcal{F}(u)$, hence the latter 
is the only limit point of  $(\mathcal{F}(u_n))_{n\in\NN}$, 
which proves the continuity of $\mathcal{F}$.

If we assume only that $\|u_n\|_{W^{1/2,2}(\T)}$ is bounded independently 
of $n$, then elliptic regularity still gives us that
$\|E(u_n)\|_{W^{1,2}(\D)}$ is bounded, hence
$(E(u_n)+\mathcal{F}(u_n))_{n\in\NN}$ is again bounded in $W^{1,2}_\R(\D)$.
As before it follows that 
$(\mathcal{F}(u_n))_{n\in\NN}$ is relatively compact in
$W^{1,2}_{\R}(\D)$, as desired.
\end{proof}

Given $u\in W^{1/2,2}_\R(\T)$,  let
$\tilde{u}:=-\tr_\T \text{Re}\,H(u)$ denote the so called
conjugate function of $u$. That is, $\tilde{u}$ is the trace 
of the harmonic conjugate of $E(u)$ that has zero mean on $\T$.
Put
$\mathcal{M}\subset W^{1/2,2}_\R(\T)$ for the subspace of functions 
with zero mean.  By \eqref{intdbar}, the map $u\mapsto \tilde{u}$ is 
continuous from $W^{1/2,2}_\R(\T)$ into $\mathcal{M}$, and since 
$\tilde{\tilde{u}}=-u+\int_\T u$, 
it is a homeomorphism of $\mathcal{M}$.
Pick $u\in \mathcal{M}$ and let $\varphi:=\text{Im}\,s(u,0,1)$.
Since $s(u,0,1)-H(u)=s(0,0,e^{H(u)})$ we have 
$\varphi=E(u)+\mathcal{F}(u)$. Set for simplicity
$R(u):=\mathcal{C}\bigl(\alpha \exp\{-2i(E(u)+\mathcal{F}(u))\}\bigr)$.
Applying the trace and conjugate operators to \eqref{eq:phi1}, we 
see that $\tr_\T\text{Re}\,s(u,0,1)=0$ 
if and only if 
\begin{equation}
\label{condRe0}
u=tr_\T\,{\rm Im}\Bigl(R(u)-\int_\T
R(u)\Bigr)\,-\,
\widetilde{\overbrace{\tr_\T\,{\rm Re}\Bigl(R(u)-\int_\T
R(u)\Bigr)}}.
\end{equation}
Let $B(u)$ denote the right hand side of \eqref{condRe0}. To complete the 
proof, it remains to show that the (nonlinear) operator $B$ has a fixed point $u_0$ 
in $\mathcal{M}$. Then the function $s(u_0,0,1)$ would satisfy the conditions of the corollary.

To prove existence of a fixed point, we claim first that $B$ is compact 
from $\mathcal{M}$ into itself. Indeed, 
by elliptic regularity, 
$E$ is linear and bounded from  $W^{1/2,2}_\R(\T)$ into  $W^{1,2}_\R(\D)$
while $\mathcal{F}$ is  compact by  Lemma~\ref{contFP}. A fortiori,
$E+\mathcal{F}$ is 
bounded and continuous from $\mathcal{M}$ into  $W^{1,2}_\R(\D)$.
Moreover, as
$\alpha\in \mathcal{D}(\D)$, it follows from
\eqref{CR2} and the dominated convergence theorem that 
$h\mapsto \alpha\exp(-2i h)$ is bounded and 
continuous from $W^{1,2}_\R(\D)$ into 
$W^{1,2}_0(\D)$. In addition, we get from \eqref{CauchyW12} that
$\mathcal{C}$ is bounded and linear  from $W^{1,2}_0(\D)$ into $W^{2,2}(\D)$, 
hence compact into $W^{1,2}(\D)$ by the Rellich--Kondrachov theorem. Finally, 
by the trace theorem, $g\mapsto \tr_\T\text{Im}\,(g-\int_\T g)$ is linear and 
bounded from $W^{1,2}(\D)$ into $\mathcal{M}$. Since the conjugate operator is
linear and bounded on $\mathcal{M}$ and composition 
with bounded continuous maps preserves compactness, the claim follows.
Appealing now  to the Leray--Schauder theorem, we know that $B$ has a fixed 
point if we can  find a constant $M$ such that
$\|u \|_{W^{1/2,2}(\T)}\le M$ holds
whenever $u=\varepsilon B(u)$ for some $\varepsilon\in [0,1]$.
However, such a $u$ must be equal to $\text{Im}\,\tr_\T s_\varepsilon$,
where $s_\varepsilon\in W^{1,2}(\D)$ has pure imaginary trace with zero mean
on $\T$ and $e^{s_\varepsilon}$ satisfies \eqref{eq:w} with $\alpha$ 
replaced by $\varepsilon\alpha$. Thus, from Lemma~\ref{expsf2}~(ii) applied with $\theta_0=0$, we conclude that 
$M=C\|\alpha\|_{L^2(\D)}$ will do for some absolute constant $C$. 

\section{Hardy spaces on the disk}
\label{HSD}

\subsection{Holomorphic Hardy spaces}
\label{HH}
For $p\in[1,\infty)$, let $H^p=H^p(\D)$ be the Hardy space of holomorphic 
functions  $f$ on $\D$ with
\begin{equation} \label{esssupp}
\left\Vert f\right\Vert_{H^p}:= 
\sup_{0<\rho  < 1}
\left(\frac{1}{2\pi}\int_0^{2\pi} \left\vert
f(\rho e^{i\theta})\right\vert^p\,d\theta\right)^{1/p} <+\infty. 
\end{equation}
The space $H^\infty$ consists of bounded holomorphic functions endowed with
the $sup$ norm. 
We refer to  \cite{duren,gar} for the following standard facts on 
holomorphic Hardy spaces. 

Each $f\in H^p$ has a non-tangential limit at a.e. $\xi\in\T$, 
which is also the
$L^p(\T)$ limit of $f_\rho (\xi):=f(\rho \xi)$ as $\rho \to1^-$ and
whose norm matches the supremum in \eqref{esssupp}. Actually 
$\|f_\rho \|_{L^p(\T)}$ is non-decreasing with $\rho $, hence instead of
\eqref{esssupp} we could as well have set\footnote{ In fact
\eqref{esssupp} expresses that $|f|^p$ has a harmonic majorant whereas
\eqref{esssuppa} bounds the $L^p$-norm of $f$ on curves tending 
to the boundary;
the first condition defines the Hardy space and the second 
the so-called Smirnov space. These coincide when  harmonic 
measure and arclength are 
comparable   on the boundary, 
\cite[Chapter 10]{duren}, \cite{TuHa2}, which is 
the case for smooth domains. The name ``Hardy space'' is then more common.}
\begin{equation} \label{esssuppa}
\left\Vert f\right\Vert_{H^p}:= 
\sup_{0<\rho  < 1}
\Bigl(\int_{\T_\rho} \left\vert
f(\xi)\right\vert^p |d\xi|\Bigr)^{1/p} <+\infty 
\end{equation}
where the integral is now with respect to the arclength.
As usual, we keep the same notation for $f$ 
and its non-tangential limit when no confusion can arise,
or write sometimes $f_{|\T}$ to emphasize that the non-tangential 
limit lives on $\T$.
Note that  $f_{|\T}$  coincides with 
$\tr f$ when  $f\in W^{1,p}(\D)$ \cite{BLRR}. Each function in $H^p$ is 
both the Cauchy and the Poisson integral of its non-tangential limit.
As regards the non-tangential maximal function, for $1\leq p<\infty$
and $f\in H^p$ we have 
\begin{equation}
\label{ntborne}
\|{\mathcal M}_\gamma f\|_{L^p(\T)}\leq C\|f\|_{L^p(\T)},
\end{equation}
where the constant $C$ depends only on $\gamma$ and $p$ 
\cite[Chapter II, Theorem 3.1]{gar}.

Traces of $H^p$-functions on $\T$ are exactly those functions in $L^p(\T)$
whose Fourier coefficients of negative index do vanish. In particular, 
if $f\in H^p$ and $f_{|\T}\in L^q(\T)$, then $f\in H^q$. It is obvious from 
Fubini's theorem that  $H^p\subset L^p(\D)$, but actually one can affirm more:
\begin{equation}
\label{sum2p}
\|f\|_{L^\lambda(\D)}\leq C\|f\|_{H^p},\qquad p\leq \lambda<2p,
\end{equation}
where $C=C(p,\lambda)$; for a proof see \cite[Theorem 5.9]{duren}.
A sequence $(z_l)\subset\D$ is the zero set of a nonzero $H^p$  function,
taking into account the multiplicities, if and only if it satisfies
the \emph{Blaschke condition}:
\begin{equation}
\label{Blaschke}
\sum_l(1-|z_l|)<\infty.
\end{equation}
A non-negative function $h\in L^p(\T)$ is such that $h=|f_\T|$ for some
nonzero $f\in H^p$ if and only if $\log h\in L^1(\T)$.
This entails that a nonzero $H^p$ function cannot vanish 
on a subset of strictly positive Lebesgue measure on $\TT$.

For $1<p<\infty$ and for every $\psi\in L^p_\RR(\T)$
there exists $g\in H^p$
such that $\mbox{Re}\,g=\psi$ on $\T$ \cite[Chapter III]{gar}. Such a $g$ is
unique up to an additive pure imaginary constant,
and if we normalize it so that $\int_\T\text{Im}\,g=0$, then
$\|g\|_{H^p}\le C 
\|\psi\|_{L^p(\T)}$ with  $C=C(p)$. In fact $g=u+iv$ on $\D$, 
where $u$
is the Poisson integral of $\psi$ and $v$ is the Poisson integral of 
\begin{equation}
\widetilde{\psi}(e^{i\theta}) := \lim_{\varepsilon\to0}
\frac{1}{2 \, \pi} \, 
\int_{\varepsilon<|\theta-t|<\pi} \frac{\psi(e^{it})}{\tan(\frac{\theta-t}{2})}
\, dt
\label{RHt}
\end{equation}
which is the so-called \emph{conjugate function} of $\psi$. This definition
carries over to $L^p(\T)$ the conjugation operator 
 $\psi\mapsto\widetilde{\psi}$ already introduced on $W^{1/2,2}(\T)$ after 
the proof of Lemma~\ref{contFP}. 
It is a theorem of
M. Riesz that the conjugation operator 
maps $L^p(\T)$ continuously 
into itself. By elliptic regularity, it is also continuous from 
$W^{1-1/p,p}(\T)$ into itself.

When $\psi\in L^1(\T)$, the conjugate function $\widetilde{\psi}$ is still
defined pointwise almost everywhere {\it via} \eqref{RHt} but it does not necessarily  
belong to $L^1(\T)$.

For $p\in(1,\infty)$, a non-negative function $\mathfrak{w}\in L^1(\T)$ 
is said to satisfy the Muckenhoupt condition $A_p$ if 
\begin{equation}
\label{defAp}
\{\mathfrak{w}\}_{A_p}:=\sup_{I}\Bigl(\frac{1}{\Lambda(I)}\int_{I} \mathfrak{w}\,d\Lambda\Bigr)
\Bigl(\frac{1}{\Lambda(I)}\int_I \mathfrak{w}^{-1/(p-1)} d\Lambda\Bigr)^{p-1}<+\infty,
\end{equation}
where the supremum is taken over all arcs $I\subset\T$.
A theorem of Hunt, Muckenhoupt and Wheeden \cite[Chapter VI, Theorem 6.2]{gar} 
asserts that $\mathfrak{w}$ satisfies condition $A_p$ if and only if 
\begin{equation}
\label{HMW}
\int_\T |\widetilde{\phi}|^p\,\mathfrak{w}\,d\Lambda
\leq C \int_\T |\phi|^p\,\mathfrak{w}\,d\Lambda,
\qquad \phi\in L^1(\T),
\end{equation}
where $C$ depends only on $\{\mathfrak{w}\}_{A_p}$.
In \eqref{HMW}, the 
assumption $\phi\in L^1(\T)$ is just a means to ensure that $\widetilde{\phi}$
is well defined.

\subsection{Pseudo-holomorphic Hardy spaces}
\label{Gpalpha}
Given $\alpha\in L^r(\D)$ for some $r\in[2,\infty)$ and $p\in(1,\infty)$,
we define the Hardy space $G^p_\alpha(\D)$ of those
$w\in L^\gamma_{loc}(\D)$ with $\gamma>r/(r-1)$ that satisfy \eqref{eq:w}, such that
\begin{equation} 
\label{esssuppw}
\Vert w\Vert_{G^p_\alpha(\D)}:= 
\sup_{0<\rho  < 1}
\Bigl(\int_{\T_\rho} \left\vert w(\xi)\right\vert^p |d\xi|\Bigr)^{1/p} <+\infty.
\end{equation}
Denote by $\mathcal{H}^p$ the Banach space of complex
measurable functions $f$ on
$\D$ such that $\text{ess.}\sup_{0<\rho<1}\,\rho\|f_\rho\|_{L^p(\T)}<+\infty$. 
Then $G^p_\alpha(\D)$ is identified with a real
subspace of $\mathcal{H}^p$. The fact that this subspace is closed  
(hence a Banach space in its own right) is a part of Theorem~\ref{trace2} below.
Note that if $w\in L^\gamma_{loc}(\D)$ satisfies \eqref{eq:w}, then 
$w\in W^{1,q}_{loc}(\D)$ for $q\in[1,2)$ by Lemma~\ref{expsf2}; hence the integral in \eqref{esssuppw} is indeed
finite for \emph{each} $\rho$ by the trace theorem.
Clearly $G^p_0(\D)=H^p$, but $G^p_\alpha(\D)$ is not
a complex vector space when $\alpha\not\equiv0$.
Spaces $G^1_\alpha(\D)$ and $G_\alpha^\infty(\D)$
could be defined similarly, but we shall not consider them.

For $r>2$, such classes of functions were apparently introduced in 
\cite{Musaev1} and subsequently considered in
\cite{Klimentov1,Klimentov2,klimentov2006,BLRR,
EfendievRuss, TheseYannick,BFL}.
In contrast to these studies,
our definition is modeled after \eqref{esssuppa}
rather than \eqref{esssupp}, that is, 
integral means in \eqref{esssuppw} are with respect to
arclength\footnote{Thus, it would be more 
appropriate to call $G^p_\alpha(\D)$ a pseudo-holomorphic Smirnov space.} 
and \emph{not} normalized arclength.
This is not important when $r>2$, but becomes essential\footnote{When 
$r=2$, $w$ may fail to satisfy condition \eqref{esssupp} 
even though it meets
\eqref{eq:w} and \eqref{esssuppw}. The problem lies with \emph{small}
 values of $r$,
as $w$ needs not be locally bounded on $\D$.} if $r=2$.

Below, we do consider the case $r=2$ and stress 
topological connections with holomorphic Hardy spaces which are new even
when $r>2$, see Theorem~\ref{trace2}~(iii).

By Lemma~\ref{expsf2}, each solution to \eqref{eq:w} 
in $L^\gamma_{loc}(\D)$, $\gamma>r/(r-1)$,
factors as $w=e^sF$ where 
\begin{equation}
\label{contnsa}
\|s\|_{W^{1,r}(\D)}\leq C(r)\|\alpha\|_{L^r(\D)}
\end{equation}
and $F$ is holomorphic in $\D$. Moreover, if $w\not\equiv0$,
one can impose $\text{Im}\,\tr_\T s=0$ and $\int_\T\text{Re}\,s=0$ or
$\text{Re}\,\tr_\T s=0$ and $\int_\T\text{Im}\,s=0$ to get unique factorization.
To distinguish between these two factorizations,
we write $w=e^{s^\mathfrak{r}}F^\mathfrak{r}$ in the
first case, and 
$w=e^{s^\mathfrak{i}}F^\mathfrak{i}$ in the second one; that is, 
$s^\mathfrak{r}$ is real  on $\T$ and  $s^\mathfrak{i}$ is 
pure imaginary there. If $w\equiv0$, we put $F^\mathfrak{r}=F^\mathfrak{i}=0$
and do not define $s^\mathfrak{r}$ and $s^\mathfrak{i}$.
When $w\not\equiv0$ (hence $w$ is a.e. nonzero), it follows
from the proof of Lemma~\ref{expsf2} that if we let 
\begin{equation}
\label{normreelR}
\mathcal{R}(\beta)(z):=
-\overline{\mathcal{C}(\beta)(1/\bar z)}=
\frac{1}{2\pi i}\int_{\D}
\frac{z\bar \beta(\xi)}{1-\bar \xi z }
\,d\xi\wedge\overline{d\xi},\,\,\, \beta\in L^r(\D),\ z\in\DD,
\end{equation}
then $s^\mathfrak{r}$ is given by
\begin{equation}
\label{normreel}
s^\mathfrak{r}=\mathcal{C}(\alpha\bar w/w)-
\mathcal{R}(\alpha\bar w/w)
\end{equation}
while $s^\mathfrak{i}$ is given by
\begin{equation}
\label{normim}
s^\mathfrak{i}=\mathcal{C}(\alpha\bar w/w)+
\mathcal{R}(\alpha\bar w/w).
\end{equation}

Indeed, it is easy to check that
$\mathcal{R}(\alpha\bar w/w)$ is a holomorphic function
in $W^{1,r}(\D)$ having zero mean on $\T$ and assuming conjugate values to
$-\mathcal{C}(\alpha\bar w/w)$ there.

From \eqref{contnsa} which is valid both for
$s^{\mathfrak{r}}$ and $s^{\mathfrak{i}}$ we get that if $r>2$ then
\begin{equation}
\label{normes}
\|e^{\pm s^\mathfrak{r}}\|_{W^{1,r}(\D)}\leq C(r,\|\alpha\|_{L^r(\D)})
\,\,\, \text{and}\,\,\,
\|e^{\pm s^\mathfrak{i}}\|_{W^{1,r}(\D)}\leq C(r,\|\alpha\|_{L^r(\D)}).
\end{equation}
For $r=2$ and for $1<q<2$, we only deduce from
\eqref{contnsa}
and Proposition~\ref{embexpS} that 
\begin{equation}
\label{normes2}
\|e^{\pm s^\mathfrak{r}}\|_{W^{1,q}(\D)}\le C(q,\|\alpha\|_{L^2(\Omega)})
\,\, \text{and}\,\,
\|e^{\pm s^\mathfrak{i}}\|_{W^{1,q}(\D)}\le  C(q,\|\alpha\|_{L^2(\Omega)}).
\end{equation}

\begin{itemize}
\item When $r>2$, we conclude from \eqref{normes} and the Sobolev embedding 
theorem that $e^{\pm s^\mathfrak{r}}$ and $e^{\pm s^\mathfrak{i}}$
are continuous and bounded independently of $w$ on $\overline{\D}$.
Hence, $w$ belongs to $G^p_\alpha(\D)$ if and only if 
$F^\mathfrak{r}$ or $F^\mathfrak{i}$ lies in $H^p$ (in which case  
both do). This way $G^p_\alpha(\D)$ inherits many properties of $H^p$.
In particular, each $w\in G_\alpha^p(\D)$ 
has a nontangential limit a.e.
on $\T$, denoted again by $w$ or $w_{\T}$ for emphasis, which is also 
the limit of $w_\rho$ as $\rho\to1^-$ in
$L^p(\T)$. Moreover, $\|w_{\T}\|_{L^p(\T)}$ 
is a norm equivalent
to \eqref{esssuppa} on $G^p_\alpha(\D)$, and we might as well have used 
\eqref{esssupp} to define the latter. Also, from Theorem~\ref{BNpar},
we infer that condition \eqref{Blaschke} characterizes the zeros of
non identically vanishing functions in $G^p_\alpha(\D)$\footnote{When $r=2$,
this property has no simple analog since $w$
is only defined  $B_{1,2}$-quasi-everywhere.}.
\item If $r=2$, all we conclude {\it a priori} 
from \eqref{normes2}, Lemma~\ref{bornewr}, and 
H\"older's inequality  is
that $F^\mathfrak{r}$ and $F^\mathfrak{i}$ belong to
$\cap_{1\leq\ell<p}H^\ell$ if $w\in G^p_\alpha(\D)$. In the other direction,
$w\in\cap_{1\leq\ell<p}G_\alpha^p(\D)$ if $F^\mathfrak{r}$ or 
$F^\mathfrak{i}$ lies in $H^p$. To clarify the matter,
one should realize that factorizations 
$w=e^{s^\mathfrak{r}}F^\mathfrak{r}$ and 
$w=e^{s^\mathfrak{i}}F^\mathfrak{i}$ no longer play equivalent roles.
For it may happen that $w\in G^p_\alpha(\D)$
and $F^\mathfrak{r}\notin H^p$. In fact, if we let
\begin{equation}
\label{exw}
w(z):=\frac{1}{\log(3/|z-1|)\,(z-1)^{1/p}},\qquad z\in\D,
\end{equation}
we get  that
$$
\Bigl|\frac{\bar \partial w(z)}{\overline{w(z)}}\Bigr|=
\bigl(2|z-1|\log(3/|z-1|)\bigr)^{-1};
$$
hence, $w\in G^p_\alpha(\D)$ with
$\alpha:=\bar\partial w/\bar w\in L^2(\D)$,
but the factorization
$$
w(z)=e^{\log\log(3/|z-1|)-a }\,
\,e^a(z-1)^{-1/p},\qquad a:=\int_\T  \log\log(3/|z-1|)\,d\Lambda(z),
$$
is such that $F^\mathfrak{r}=e^a(z-1)^{-1/p}\notin H^p$.

On the other hand, $w\in G^p_\alpha(\D)$ if and only if
$F^\mathfrak{i}\in H^p$. Assume indeed that $0\not\equiv w\in G^p_\alpha$.
Since $F^\mathfrak{i}\in H^\ell$ for $1\leq\ell<p$
and $e^{s^\mathfrak{i}_\rho}$ converges to
$e^{s^\mathfrak{i}}$ in $W^{1,q}(\D)$ for all $q\in[1,2)$ by Proposition~\ref{embexpS},  it follows from Lemma~\ref{bornewr} 
and H\"older's inequality  that
$\tr_{\T}  w_\rho$ converges as $\rho\to1^-$ to 
$e^{\tr_\T s^\mathfrak{i}}F^\mathfrak{i}_{|\T}$ in 
$L^\lambda(\T)$, for every $\lambda\in[1,p)$. 
Moreover, as $\tr_{\T}w_\rho$ remains bounded in $L^p(\T)$
by \eqref{esssuppw}, it 
converges weakly there to $e^{\tr_\T s^\mathfrak{i}}F^\mathfrak{i}_{|\T}$ 
when  $\rho\to1^-$,
since this  is the only weak limit possible granted 
the convergence of  $\tr_{\T}  w_\rho$ in $L^\lambda(\T)$.
In particular,  $e^{\tr_\T s^\mathfrak{i}}F^\mathfrak{i}_{|\T}\in L^p(\T)$,
and since $|e^{\tr_\T s^\mathfrak{i}}|\equiv 1$
we conclude that $F^\mathfrak{i}_{|\T}\in L^p(\T)$, and hence $F^\mathfrak{i}
\in H^p$. Conversely, if  $F^\mathfrak{i}\in H^p$,
then $w$ satisfies \eqref{esssuppw}
by Corollary~\ref{condHm}.
\end{itemize}

The fact that $\tr_{\T}  w_\rho$ converges  strongly in $L^p(\T)$ as $\rho\to1^-$, and not just weakly 
as we showed above, is a part of the next theorem, whose assertion $(iii)$ is new even for $r>2$.

\begin{thm}
\label{trace2}
Let $\alpha\in L^r(\D)$ with $2\leq r<\infty$ and 
fix $p\in(1,\infty)$. 
\begin{itemize}
\item[(i)] Each $w\in G^p_\alpha(\D)$ has a trace
$w_\T$ on $\T$ given by
\begin{equation}
\label{deftr2}
w_\T:=\lim_{\rho\to1^-}\tr_\T w_\rho\quad\text{in $L^p(\T)$.}
\end{equation}
When $r>2$, the function $w_\T$ is also the 
non-tangential limit of $w$ a.e. on $\T$.
\item[(ii)] For some $C>0$ depending only on
$|\alpha|$  and $p$ we have 
\begin{equation}
\label{normeq}
\|w_\T\|_{L^p(\T)}\leq \|w\|_{G^p_\alpha(\D)}\leq C\|w_\T\|_{L^p(\T)},
\end{equation}
and $G^p_\alpha(\D)$ is a real Banach space on which
$\|w_\T\|_{L^p(\T)}$ is a norm equivalent to \eqref{esssuppw}. 
\item[(iii)] The map $w\mapsto F^\mathfrak{i}$ is a homeomorphism from
$G^p_\alpha(\D)$ onto $H^p$. When $r>2$, the map
$w\mapsto F^\mathfrak{r}$ is also  such a homeomorphism.
\item[(iv)]  If $w\in G^p_\alpha(\D)$ and 
$w_\T\in L^q(\T)$ for some $q\in(1,\infty)$, then $w\in G^q_\alpha(\D)$.
A non-negative function $h\in L^p(\T)$ is such that $h=|w_\T|$ for some
nonzero $w\in G^p_\alpha(\D)$ if and only if $\log h\in L^1(\T)$.
\end{itemize}
\end{thm}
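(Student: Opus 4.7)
My plan hinges on the imaginary factorization $w = e^{s^{\mathfrak{i}}} F^{\mathfrak{i}}$ of Lemma~\ref{expsf2}(ii), whose defining feature is that $s^{\mathfrak{i}}$ is purely imaginary on $\T$, so $|e^{\tr_\T s^{\mathfrak{i}}}| \equiv 1$. Consequently the candidate trace
$$
w_\T := e^{\tr_\T s^{\mathfrak{i}}}\, F^{\mathfrak{i}}_{|\T}
$$
satisfies $|w_\T| = |F^{\mathfrak{i}}_{|\T}|$ a.e.\ on $\T$, reducing the $L^p$-theory of $w_\T$ to the classical Hardy theory of $F^{\mathfrak{i}}_{|\T} \in H^p$. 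The discussion immediately preceding the theorem has already established that $w \in G^p_\alpha \Leftrightarrow F^{\mathfrak{i}} \in H^p$ and that $\tr_\T w_\rho \to w_\T$ weakly in $L^p(\T)$, so the remaining work is: promoting weak to strong $L^p$ convergence, establishing the two-sided norm bound of (ii), verifying the homeomorphism of (iii), and deriving the $L^q$-promotion and modulus characterization of (iv).

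For (i) and (ii), the lower bound $\|w_\T\|_{L^p} \leq \|w\|_{G^p_\alpha}$ follows from weak lower semicontinuity of the $L^p$-norm, while the upper bound $\|w\|_{G^p_\alpha} \leq C\|F^{\mathfrak{i}}\|_{H^p} = C\|w_\T\|_{L^p}$ is furnished by the multiplier theorem, Corollary~\ref{condHm}. Strong $L^p$-convergence in (i) then follows from uniform convexity of $L^p$ ($1<p<\infty$): weak convergence combined with norm convergence $\|\tr_\T w_\rho\|_{L^p} \to \|w_\T\|_{L^p}$ yields strong convergence, and the needed $\limsup$ inequality comes from dominated convergence against the $L^p(\T)$-majorant of $|w(\rho\xi)|$ furnished again by Corollary~\ref{condHm}. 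Completeness of $G^p_\alpha$ is deferred to (iii).

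For (iii), the bijection $\Psi : w \mapsto F^{\mathfrak{i}}$ from $G^p_\alpha$ onto $H^p$ is already implicit in the pre-theorem discussion: surjectivity comes from applying Corollary~\ref{BNreal} to any $F \in H^p$ followed by Corollary~\ref{condHm} to place the resulting $e^s F$ in $G^p_\alpha$, and injectivity from the uniqueness in Lemma~\ref{expsf2}(ii). To prove continuity of $\Psi^{-1}$, given $F_n \to F$ in $H^p$, the factors $s_n$ from Corollary~\ref{BNreal} are uniformly bounded in $W^{1,r}(\D)$ by \eqref{intdbarb}; Rellich--Kondrachov compactness together with the uniqueness in Theorem~\ref{BNpar} forces the full sequence $s_n \to s$ (strongly in $W^{1,r}$ for $r>2$, and at least in $L^\ell$ and pointwise a.e.\ when $r=2$). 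Then
$$
(w_n - w)_\T = e^{\tr_\T s_n}\bigl(F_{n,|\T} - F_{|\T}\bigr) + \bigl(e^{\tr_\T s_n} - e^{\tr_\T s}\bigr)F_{|\T},
$$
where the first term has $L^p(\T)$-norm equal to $\|F_{n,|\T} - F_{|\T}\|_{L^p} \to 0$ by unimodularity, and the second tends to $0$ in $L^p(\T)$ by dominated convergence (using Proposition~\ref{embexpS} for continuity of the exponential); the norm equivalence in (ii) then yields $w_n \to w$ in $G^p_\alpha$. Continuity of $\Psi$ is analogous: given $w_n \to w$ in $G^p_\alpha$, $\R$-linearity of \eqref{eq:w} implies $w_n - w$ is pseudo-holomorphic with the same $\alpha$, so (ii) gives $w_{n,\T} \to w_\T$ in $L^p(\T)$; extracting a subsequence with $w_n \to w$ a.e.\ in $\D$, the ratios $\bar w_n/w_n \to \bar w/w$ a.e., whence by Lemma~\ref{arglisse} and the uniqueness assertion of Theorem~\ref{BNpar}, $s^{\mathfrak{i}}_n \to s^{\mathfrak{i}}$; the identity $F^{\mathfrak{i}}_{n,|\T} = e^{-\tr_\T s^{\mathfrak{i}}_n}\, w_{n,\T}$ together with unimodularity then yields $F^{\mathfrak{i}}_n \to F^{\mathfrak{i}}$ in $H^p$. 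The case $r > 2$ for $w \mapsto F^{\mathfrak{r}}$ is strictly easier, as $e^{\pm s^{\mathfrak{r}}}$ are bounded and continuous on $\bar\D$. Completeness of $G^p_\alpha$ then follows from the homeomorphism with $H^p$. Finally, (iv) is immediate from $|w_\T| = |F^{\mathfrak{i}}_{|\T}|$ combined with the standard facts for $H^p$: $w_\T \in L^q \Leftrightarrow F^{\mathfrak{i}} \in H^q \Leftrightarrow w \in G^q_\alpha$, and $h = |w_\T|$ for some nonzero $w \in G^p_\alpha$ iff $\log h \in L^1(\T)$.

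The main obstacle is the regime $r=2$, where $e^{s^{\mathfrak{i}}}$ is generally unbounded and may lack nontangential boundary limits; here all of Proposition~\ref{embexpS}, Corollary~\ref{condHm}, and the uniqueness of Theorem~\ref{BNpar} are needed simultaneously, and the unimodularity of $e^{\tr_\T s^{\mathfrak{i}}}$ on $\T$ --- the very reason for preferring the imaginary over the real normalization --- is the structural feature that allows $L^p$ estimates to be transported between the pseudo-holomorphic and the classical holomorphic side.
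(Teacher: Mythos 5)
Your overall strategy is the paper's: work with the $\mathfrak{i}$-normalized factorization $w=e^{s^\mathfrak{i}}F^\mathfrak{i}$, exploit $|e^{\tr_\T s^\mathfrak{i}}|\equiv1$, and transport everything to $H^p$ via Corollary~\ref{condHm} and the uniqueness statements. But there is a genuine gap at the crux of part (i) in the critical case $r=2$. You obtain the norm convergence $\|\tr_\T w_\rho\|_{L^p(\T)}\to\|w_\T\|_{L^p(\T)}$ (the input needed before uniform convexity can upgrade weak to strong convergence) ``by dominated convergence against the $L^p(\T)$-majorant of $|w(\rho\xi)|$ furnished by Corollary~\ref{condHm}''. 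Corollary~\ref{condHm} furnishes no such majorant: it only bounds the circle means $\sup_\rho\int_{\T_\rho}|w|^p\,|d\xi|$, not a fixed function of $\xi\in\T$ dominating $|w(\rho\xi)|$ uniformly in $\rho$. Such a majorant would amount to the radial maximal function of $w$ lying in $L^p(\T)$, which is exactly what may fail when $r=2$: the Remark following the theorem exhibits $e^s\in G^p_\alpha(\D)$ whose non-tangential maximal function is infinite at every point of $\T$, and the whole point of the multiplier Theorem~\ref{somprodBN} is that only the circle integrals, not pointwise maximal functions, can be controlled. The paper closes this step by a different device: applying Corollary~\ref{condHm} with $ps$ in place of $s$ to get uniform $L^p(\T)$-boundedness of $e^{p\,{\rm Re}\,s_\rho}|F_\rho|$, extracting a weakly convergent subsequence whose weak limit is identified (Egoroff) with the pointwise limit $|F_{|\T}|$, and pairing it against $\Theta_k=|F_{\rho_{n_k}}|^{p-1}$, which converges strongly in $L^{p'}(\T)$; this pairing gives $\int_\T e^{p\,{\rm Re}\,s_{\rho_{n_k}}}|F_{\rho_{n_k}}|^p\to\int_\T|F|^p$, hence the norm convergence, and only then uniform convexity. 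Without this (or an equivalent argument) your proof of \eqref{deftr2} does not go through.

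Two further points need repair. First, (ii) requires the constant in \eqref{normeq} to depend only on $|\alpha|$ and $p$, and Corollary~\ref{condHm}, which you cite, is purely qualitative; the paper gets the uniform constant by combining \eqref{normim} with Proposition~\ref{evalderCauchy} (equicontinuity of the Cauchy transform, depending only on $|\alpha|$), so that the smallness scale entering the quantitative Theorem~\ref{somprodBN} is uniform over all $w\in G^p_\alpha(\D)$, together with \eqref{ntborne}. Second, ``completeness of $G^p_\alpha$ follows from the homeomorphism with $H^p$'' is not a proof: completeness is not a topological invariant, and since $w\mapsto F^\mathfrak{i}$ is nonlinear the norm comparison does not transport Cauchy sequences; the paper instead proves closedness of $G^p_\alpha(\D)$ in $\mathcal{H}^p$ directly (uniform $W^{1,q}$-bounds on $e^{s_n}$, $H^p$-bounds on $F_n$, weak compactness in $L^\gamma(\D)$, and passage to the limit in \eqref{eq:w}), and this closedness is in fact used in the continuity argument for the inverse map, so your ordering risks circularity. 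Your treatment of (iii) and (iv) otherwise follows the paper's lines (compactness of the $s_n$, identification of limits by uniqueness, unimodular splitting of boundary traces) and is sound modulo the usual subsequence bookkeeping and the case $w\equiv0$.
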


\begin{proof}
If $r>2$, all the properties except $(iii)$ follow from their $H^p$-analogs
{\it via} the continuity and uniform boundedness
of  $e^{\pm s^\mathfrak{r}}$ or $e^{\pm s^\mathfrak{i}}$
discussed earlier in this section, see also
\cite{Musaev1,Klimentov1,BLRR,BFL}.

We postpone the proof of $(iii)$ and assume for now that $r=2$.
Take $w\in G_\alpha^p(\D)\setminus\{0\}$ and put $s=s^\mathfrak{i}$,
$F=F^\mathfrak{i}$ to simplify notation. 
To prove \eqref{deftr2} we need to verify that given a sequence 
$(\rho_n)\subset(0,1)$ tending to $1$, one can extract a subsequence 
$(\rho_{n_k})$ such that $\tr_\T\, w_{\rho_{n_k}}$ converges to 
$e^{\tr_\T \,s}F_{|\T}$ in $L^p(\T)$. 

Since $s_\rho$ converges to
$s$ in $W^{1,2}(\D)$, we get from Lemma~\ref{bornewr}
that $\tr_\T s_\rho$ 
converges to $\tr_\T s$ in $L^\ell(T)$, as $\rho\to1^-$, for all
$\ell\in[1,\infty)$. Moreover, as we pointed out before the theorem, 
$F\in H^p$, and hence $(F_\rho)_{|\T}$ 
converges to $F_{|\T}$ in $L^p(\T)$.
Extracting  if necessary a subsequence from $(\rho_n)$ 
(still denoted by $(\rho_n)$), we can assume that
$\tr_\T s_{\rho_n}$ (resp.  
$\left(F_{\rho_n}\right)_{|\T}$) also 
converges pointwise a.e. on $\T$ to $\tr_\T s$ (resp. $F_{|\T}$).
Now, Corollary~\ref{condHm}, applied with 
$ps$ instead of  $s$, 
implies that 
$\|e^{ps_\rho} F_\rho\|_{L^p(\T)}$ 
is uniformly bounded as $\rho\to1^-$. Therefore, as the weak limit 
coincides with 
the pointwise limit when both exist by Egoroff's theorem, 
there is a subsequence $(\rho_{n_k})$ such that 
$\Bigl(e^{p{\rm Re}\, s_{\rho_{n_k}}}\bigl|F_{\rho_{n_k}}\bigr|\Bigr)$ 
converges weakly to $|F|$ in $L^p(\T)$. 
Letting $1/p+1/p'=1$, this means that
for each test function $\Theta\in L^{p'}(\T)$ we have:
\begin{equation}
\label{convfp}
\lim_{k\to\infty}\Bigl|\int_\T e^{p {\rm Re}\, s_{\rho_{n_k}}(z)}\bigl|F_{\rho_{n_k}}(z) \bigr|\Theta(z)\, |dz|-\int_\T |F(z)|\Theta(z)\,|dz|\Bigr|= 0.
\end{equation}
Set $\Theta_k=\bigl|F_{\rho_{n_k}}\bigr|^{p-1}\in L^{p'}(\T)$. Convergence of
$(F_\rho)_{|\T}$ to $F_{|\T}$ in $L^p(\T)$ implies easily that
$\Theta_k$ converges to $|F|^{p-1}$ in $L^{p'}(\T)$. In view of
\eqref{convfp}, this yields
$$
\lim_{k\to\infty}\Bigl|\int_\T e^{p {\rm Re}\, s_{\rho_{n_k}}(z)}\bigl|F_{\rho_{n_k}}(z)\bigr|^{p}\, |dz|-\int_\T |F(z)|^p\,|dz|\Bigr|= 0.
$$
Therefore,
$\bigl\| \tr_\T w_{\rho_{n_k}}\bigr\|_{L^p(\T)}=
\bigl\| e^{{\rm Re}\, s_{\rho_{n_k}}}F_{\rho_{n_k}} \bigr\|_{L^p(\T)}$ 
tends  to $\|e^{\tr_\T s}F_{|\T}\|_{L^p(\T)}=
\bigl\| F_{|\T} \bigr\|_{L^p(\T)}$ when $k\to\infty$. 
However, from the discussion 
before the theorem, we know  that 
$\tr_{\T}w_{\rho_{n_k}}$ converges weakly to $e^{\tr_\T s}F_{|\T}$ in 
$L^p(\T)$,
so by uniform convexity of $L^p(\T)$ the convergence must in fact be strong
because, as we just showed, the 
norm of the weak limit is the limit of the norms \cite[Theorem 3.32]{brezis}.
This proves $(i)$.

Next, we observe by the absolute continuity of $|\alpha|^2dm$ that for every  
$\varepsilon>0$ there is $\omega(\varepsilon)>0$ for which
$\|\alpha\|_{L^2(Q_{\omega(\varepsilon)}\cap\D)}<\varepsilon$ as soon as 
$Q_{\omega(\varepsilon)}$ is a cube of sidelength $\omega(\varepsilon)$. 
Thus, in view of \eqref{normim}, 
we can apply Proposition~\ref{evalderCauchy} to 
$\beta:=\alpha \bar w/w$
and obtain a strictly positive function  $\widetilde{\omega}$ on $\RR^+$,
\emph{depending only on} $|\alpha|$, such that
\begin{equation}
\label{borderCa}
\|\partial s\|_{L^2(Q_{\widetilde{\omega}(\eta)}\cap\D)}+
\|\bar \partial s \|_{L^2(Q_{\widetilde{\omega}(\eta)}\cap\D)}
<\eta
\end{equation}
as soon as 
$Q_{\widetilde{\omega}(\eta)}$ is a cube of sidelength 
$\widetilde{\omega}(\eta)$. A fortiori, \eqref{borderCa} holds with
$\text{Re}\,s$ instead of $s$. Now, picking any $\gamma\in (0,\pi/2)$ and 
recalling that $|w_\T|=|F_{|\T}|$ because 
${\rm Re}\,s\in W^{1,2}_{0,\R}(\D)$,
we deduce 
from \eqref{ntborne} and Theorem~\ref{somprodBN} applied to $f={\rm Re}\,s$
and $g=e^{i{\rm Im}\,s}F$ that
the right inequality in \eqref{normeq} holds. In another connection,
the left inequality is obvious from \eqref{deftr2}. 
To show that $G^p_\alpha(\D)$
is a Banach space, consider a sequence $(w_n)\subset G_\alpha^p(\D)$ 
converging  in  $\mathcal{H}^p$ to some function $w$. We must prove that
$w\in G^p_\alpha$. We can assume  $w\not\equiv0$,  therefore
$w_n\not\equiv0$ for $n$ large enough.
Convergence in 
$\mathcal{H}^p$ being  stronger than in $L^p(\D)$, 
{\it a fortiori} $w_n$ converges to $w$ in $\mathcal{D}'(\D)$
and, moreover, some subsequence, again denoted by $w_n$, 
converges pointwise a.e. to $w$. Besides, if we write 
$w_n=e^{s_n} F_n$ where we mean as before that $s_n=s_n^\mathfrak{i}$ 
and $F_n=F_n^\mathfrak{i}$,  we get from \eqref{normes2}
that the sequence $(e^{s_n})$ is bounded in 
$W^{1,q}(\D)$ for each $q\in(1,2)$. 
Therefore, by the Sobolev embedding theorem,
$(e^{s_n})$ is bounded in $L^\ell(\D)$ for each
$\ell\in [1,\infty)$. In addition, since $|e^{\tr_\T s_n}|\equiv1$,
it follows from \eqref{normeq} that $(F_n)$ is bounded in $H^p$,  
hence also in $L^\ell(\D)$ for each 
$\ell\in(1,2p)$ by \eqref{sum2p}.  Altogether, by H\"older's inequality,
$(w_n)$ is bounded in $L^\gamma(\D)$ for some $\gamma>2$. Consequently, 
some subsequence converges weakly in $L^\gamma(\D)$, and since the weak
limit coincides with the pointwise limit, if it exists, we conclude 
that the weak limit is $w$. In particular, $w\in L^\gamma(\D)$.
Moreover, by H\"older's inequality,
$(\alpha \bar w_n)$ is bounded in $L^t(\D)$ for some
$t>1$, and arguing as before we get that some 
subsequence (again denoted by $(\alpha \bar w_n)$) converges weakly 
to $\alpha \bar w$ there.
Thus, passing to the distributional limit in the relation
$\bar\d w_n=\alpha\bar w_n$, we obtain \eqref{eq:w} so that 
$w\in G^p_\alpha(\D)$. This proves $(ii)$.

We already know from Theorem~\ref{BNpar} 
and  the discussion before Theorem~\ref{trace2}
that the map $w\mapsto F^\mathfrak{i}$ is  
bijective from $G^p_\alpha(\D)$ to $H^p$. 
Since $|w_\T|=|{F^\mathfrak{i}}_{|\T}|$,
it is clear from \eqref{normeq}  that this map and its inverse are
continuous at $0$.
Let now $w_n$ converge to $w\not\equiv0$ in $G^p_\alpha(\D)$ and write
$w_n=e^{s_n^\mathfrak{i}}F_n^\mathfrak{i}$,  
$w=e^{s^\mathfrak{i}}F^\mathfrak{i}$. We claim that some subsequence
of $F_n^\mathfrak{i}$ converges to $F^\mathfrak{i}$ in $H^p$ and this 
establishes continuity of the map at every point. 
As  ${F_n^\mathfrak{i}}_{|\T}$ is bounded in $L^p(\T)$ by \eqref{normeq},
some subsequence converges weakly there to $\Phi_{|\T}$ for some  
$\Phi\in H^p$. Thus, replacing $w_n$ by a subsequence 
(again denoted by $w_n$),
we may assume by the Cauchy formula that $F_n^\mathfrak{i}$ converges 
locally uniformly to $\Phi$ on $\D$. Note that $\Phi\not\equiv0$ for 
otherwise, in view 
of \eqref{normes2}, we would have that $w_n$ converges to the zero 
distribution,
contradicting  that $w\not\equiv0$. In particular,    
$\alpha\bar F_n^\mathfrak{i}/F_n^\mathfrak{i}$ converges in
$L^2(\D)$ to  $\alpha \bar\Phi/\Phi$ by the dominated convergence theorem.
Since $\bar\d s_n^\mathfrak{i}=\alpha\bar F_n^\mathfrak{i}/F_n^\mathfrak{i}
\exp(-2i\text{Im}\,s_n^\mathfrak{i})$ by Lemma~\ref{arglisse} 
and 
$\|s_n^\mathfrak{i}\|_{W^{1,2}(\D)}$ is uniformly bounded by 
\eqref{contnsa},
we can argue as we did after \eqref{limsn} (put 
$\alpha_n\equiv\alpha\bar F_n^\mathfrak{i}/F_n^\mathfrak{i}$ and 
$\theta_0=0$ in the discussion there) to the 
effect that a subsequence, again denoted by $s_n^\mathfrak{i}$,
converges to some $\sigma\in W^{1,2}(\D)$ such that
$\text{Re}\,\tr_\T \sigma=0$ and $\int_\T\sigma=0$, both a.e. 
and in $W^{1,2}(\D)$. Refining the sequence if necessary, we can further
assume that $w_n$ converges a.e. to $w$. 
Taking pointwise limits we get  $w=e^\sigma\Phi$,
hence $\sigma=s^\mathfrak{i}$ and $\Phi=F^\mathfrak{i}$ by the uniqueness part
of Corollary~\ref{BNreal}. Thus, $F^\mathfrak{i}_{|\T}$ is the weak limit
of  ${F_n^\mathfrak{i}}_{|\T}$, and since  
$\|{F^\mathfrak{i}}\|_{L^p(\T)}=\|w\|_{L^p(\T)}$ is the limit of
$\|{F_n^\mathfrak{i}}\|_{L^p(\T)}=\|w_n\|_{L^p(\T)}$, the convergence 
in fact takes place in $L^p(\T)$, thereby proving the claim. 
Conversely, let $w_n=e^{s_n^\mathfrak{i}}F_n^\mathfrak{i}$ be a sequence in
$G_\alpha^p(\D)$ such that $F_n^\mathfrak{i}$ converges to $\Phi\not\equiv0$ 
in $H^p$. By Corollary~\ref{BNreal}, $\|s_n^\mathfrak{i}\|_{W^{1,2}(\D)}$
is bounded uniformly in $n$, and, as before,  
a subsequence, again denoted by $s_n^\mathfrak{i}$,
converges in $W^{1,2}(\D)$ to some $\sigma$ such that
$\text{Re}\,\tr_\T \sigma=0$ and $\int_\T\sigma=0$. 
Refining the sequence if necessary, we can 
assume in view of the trace theorem 
that $\tr_\T s_n^\mathfrak{i}$ converges pointwise a.e. on $\T$
to  $\tr_\T \sigma$. By the dominated convergence, $(w_n)_\T$ tends to
$e^{\tr_\T\sigma}\Phi_{|\T}$ in $L^p(\T)$. Using  \eqref{normeq} we obtain  
that $w_n$ converges in $G^p_\alpha(\D)$ to some 
$w=e^{s^\mathfrak{i}} F^\mathfrak{i}$, and by the
continuity proven before we conclude that
$\Phi=F^\mathfrak{i}$. This proves $(iii)$ when $r=2$.
That both $w\mapsto F^\mathfrak{i}$  and  $w\mapsto F^\mathfrak{r}$ are
homeomorphisms when $r>2$ is similar but easier because then $s\to e^s$ is
bounded and continuous from $W^{1,r}(\D)$ into 
$W^{1,r}(\D)\subset L^\infty(\D)$.

Finally, $(iv)$ follows from the corresponding properties of $H^p$ functions,
the fact that $w\in G^p_\alpha$ if and 
only it $F^\mathfrak{i}\in H^p$, and the equality 
$|w_\T|=|F^\mathfrak{i}_{|\T}|$.
\end{proof}

\begin{rem} When $r=2$, $w_\T$ in  Theorem~\ref{trace2} 
is not necessarily the non-tangential limit of $w$.
Indeed, if $(z_n)\subset\D$ is nontangentially dense on $\T$,
then $s(z):=\sum_n2^{-n}\log\log 2/|z-z_n|$ lies in $W^{1,2}(\D)$
so that $e^s\in G^p_\alpha(\D)$ for all $p\in(1,\infty)$ 
with $\alpha:=\bar\partial s$ by
Lemma~\ref{bornewr}. Yet, $e^s$ is not even 
nontangentially bounded at a single $\xi\in\T$.
\end{rem}

\section{The generalized conjugation operator}
\label{x20}

The M. Riesz theorem may be rephrased as follows.
Given $\psi\in L^p_\R(\T)$ with $p\in(1,\infty)$, the problem of finding a 
holomorphic function $f$ in $\D$ such that $\text{Re}\,\tr_\T f_\rho$ tends to
$\psi$ in $L^p(\T)$ has a solution in $H^p$ which is unique up to an 
additive imaginary constant. In fact, if we normalize it to have
mean $\int_\T\psi/2\pi+ic$ on $\T$, then $f_{|\T}=\psi+i\tilde{\psi}+ic$ and 
we have 
$\|f\|_{H^p}\le C(\|\psi\|_{L^p(\T)}+|c|)$ for some $C$ 
depending only on $p$.

The corresponding problem for pseudo-holomorphic functions, {\it i.e.}
for solutions to \eqref{eq:w} when $\alpha\not\equiv0$, turns out to have
a similar answer in $G^p_\alpha$ as long as $\alpha\in L^r(\D)$ for some
$r\ge2$. 
When $r>2$ this was essentially proven in \cite{Klimentov1},
see also \cite{BLRR} and
\cite{BFL}. 
More precisely:

\begin{thm}[\cite{Klimentov1},\cite{BLRR},\cite{BFL}]
\label{Dirr}
Let $\alpha\in L^r(\D)$ with $2< r\leq\infty$ and 
$1<p<\infty$. For every  
$\psi\in L^p_\R(\T)$ and $c\in\R$ there is a unique 
$w\in G^p_\alpha(\D)$ such that 
$\mbox{Re}\,w_\T=\psi$ and $\int_\T \mbox{Im}\,w_\T=c$. 
Moreover, $\|w\|_{G^p_\alpha(\D)}\le C(\|\psi\|_{L^p(\T)}+|c|)$,
where $C$ depends only on $p$ and $r$.
\end{thm}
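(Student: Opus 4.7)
The plan is to exploit the real-normalized factorization $w = e^{s^{\mathfrak r}}F^{\mathfrak r}$ from Section~\ref{HSD} to translate the problem into a nonlinear perturbation of the classical M.~Riesz Dirichlet problem in $H^p$, and then close the loop via a Leray--Schauder fixed-point argument. Since $r>2$, the factor $e^{\pm s^{\mathfrak r}}$ is continuous on $\overline{\D}$ and bounded above and below by constants depending only on $\|\alpha\|_{L^r(\D)}$ (by \eqref{normes} and Sobolev embedding), so $s^{\mathfrak{r}}|_\T$ is a real positive multiplier on $\T$. The boundary conditions then read
\begin{equation}
\operatorname{Re} F^{\mathfrak r}|_\T \;=\; \psi\, e^{-s^{\mathfrak r}|_\T},\qquad
\int_\T e^{s^{\mathfrak r}|_\T}\,\operatorname{Im} F^{\mathfrak r}|_\T\,d\Lambda \;=\; c.
\tag{$\star$}
\end{equation}

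First I would settle uniqueness. Since \eqref{eq:w} is $\R$-linear, the difference $w=w_1-w_2$ of two solutions still lies in $G^p_\alpha(\D)$ and satisfies $\operatorname{Re}w|_\T=0$, $\int_\T\operatorname{Im}w|_\T=0$. Writing $w=e^{s^{\mathfrak r}}F^{\mathfrak r}$ and using that $e^{s^{\mathfrak r}|_\T}>0$, the first equation forces $\operatorname{Re}F^{\mathfrak r}|_\T=0$, hence $F^{\mathfrak r}\equiv i\kappa$ for some $\kappa\in\R$; the second then reads $\kappa\int_\T e^{s^{\mathfrak r}|_\T}\,d\Lambda=0$, which gives $\kappa=0$ and so $w=0$. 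For the a priori estimate, the uniform bound on $e^{\pm s^{\mathfrak r}}$ and the classical Riesz theorem applied to $F^{\mathfrak r}$ (after splitting off the arbitrary imaginary constant, which is then pinned down by the second equation of ($\star$)) yield $\|F^{\mathfrak r}\|_{H^p}\le C(\|\psi\|_{L^p(\T)}+|c|)$, whence $\|w\|_{G^p_\alpha(\D)}\le C(\|\psi\|_{L^p(\T)}+|c|)$ by Theorem~\ref{trace2}(iii).

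For existence I would define a map $T:H^p\to H^p$ by: given $F$, form the real-normalized $s=s^{\mathfrak r}(F)\in W^{1,r}(\D)$ provided by Theorem~\ref{BNpar} (applied with $\psi=0$, $\lambda=0$), and let $T(F)$ be the unique $H^p$-function with $\operatorname{Re}T(F)|_\T=\psi\,e^{-s|_\T}$ and $\int_\T e^{s|_\T}\operatorname{Im}T(F)|_\T\,d\Lambda=c$. A fixed point of $T$ solves ($\star$) and hence the theorem. The map $T$ factors as $F\mapsto s^{\mathfrak r}(F)\mapsto (e^{\pm s^{\mathfrak r}(F)|_\T})\mapsto T(F)$; continuity of the first arrow is extracted, as in the proof of Theorem~\ref{trace2}(iii), from the uniqueness part of Theorem~\ref{BNpar} together with the dominated convergence argument of subsection~\ref{existence}. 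Because $r>2$, the Sobolev embedding $W^{1,r}(\D)\hookrightarrow C(\overline{\D})$ is compact, so $F\mapsto e^{\pm s^{\mathfrak r}(F)|_\T}$ is a compact map into $C(\T)$; combined with the bounded Riesz reconstruction $L^p(\T)\to H^p$ this makes $T$ compact and continuous. The a priori estimate applied to the homotopy $F=tT(F)$, $t\in[0,1]$ (which corresponds to the data $(t\psi,tc)$) gives a uniform bound $\|F\|_{H^p}\le M$, and the Leray--Schauder theorem \cite[Theorem 11.3]{gt} yields a fixed point, proving existence.

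The main obstacle is the verification that $F\mapsto s^{\mathfrak r}(F)$ is continuous from $H^p$ into $W^{1,r}(\D)$: this is where one must genuinely use that $F$ enters only through the bounded-modulus coefficient $\beta=\alpha\bar F/F$ (Lemma~\ref{arglisse}), run a dominated convergence on $\alpha e^{-2i\operatorname{Im}s}$, and invoke the $W^{1,r}$-stability estimate \eqref{intdbarb} to upgrade a.e.\ convergence to $W^{1,r}$-convergence, essentially mirroring the limiting argument used for the passage from smooth $\alpha$ to $L^2$ in subsection~\ref{existence}. The restriction $r>2$ is crucial at two points: for the uniform $L^\infty$ bound on $e^{\pm s^{\mathfrak r}}$ entering ($\star$) and for the compactness of $T$.
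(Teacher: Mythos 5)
Your proposal is essentially correct, and it is worth noting that it does \emph{not} reproduce an argument from the paper: Theorem~\ref{Dirr} is quoted there from \cite{Klimentov1}, \cite{BLRR}, \cite{BFL} without proof, and the paper's own contribution is the $r=2$ analogue (Theorem~\ref{Dirichlet2}), proved by approximating $\alpha$ by bounded coefficients, invoking Theorem~\ref{Dirr} for the approximants, and controlling the limit through the weighted conjugation estimate \eqref{HMW} for the $A_p$ weight $e^{p\,{\rm tr}_\T s}$. Your route is instead a self-contained derivation from the paper's own toolbox: the uniqueness argument (real trace of $F^{\mathfrak r}$ vanishes, hence $F^{\mathfrak r}=i\kappa$, and the mean condition kills $\kappa$) is exactly parallel to the uniqueness step in the proof of Theorem~\ref{Dirichlet2}; the a priori bound via M.~Riesz and the uniform two-sided bounds on $e^{\pm s^{\mathfrak r}}$ from \eqref{normes} is fine; and the continuity of $F\mapsto s^{\mathfrak r}(F)$ that you flag as the main obstacle does go through by the scheme you indicate (dominated convergence on $\beta=\alpha\bar F/F$, the stability estimate \eqref{intdbar}--\eqref{intdbarb}, compact Sobolev embedding, and the uniqueness part of Theorem~\ref{BNpar}), which is the same limiting mechanism the paper runs in Section~\ref{x19} and in Theorem~\ref{trace2}(iii). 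What your approach buys is a direct proof for $r>2$ using only Theorem~\ref{BNpar} and Theorem~\ref{trace2}; what the paper's $r=2$ proof buys (weighted conjugate function plus Muckenhoupt weights) is the ability to handle unbounded $e^{\pm s}$, which your scheme cannot reach.

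One step needs repair. The Leray--Schauder theorem \cite[Theorem 11.3]{gt} requires $T$ to be continuous on all of $H^p$, but your $T$ is not defined at $F\equiv0$ (Theorem~\ref{BNpar} requires $F\not\equiv0$) and admits no continuous extension there: since $s^{\mathfrak r}(\lambda F)=s^{\mathfrak r}(F)$ for every $\lambda>0$, $T$ is constant on rays, so $\lim_{\lambda\to0^+}T(\lambda F)=T(F)$ depends on the direction $F$. The fix is cheap. Because $\|s^{\mathfrak r}(F)\|_{W^{1,r}(\D)}\le C\|\alpha\|_{L^r(\D)}$ \emph{uniformly in} $F$, the boundary factors $e^{\pm s^{\mathfrak r}(F)|_\T}$ range in a fixed compact subset of $C(\T)$, pinched between $e^{-C}$ and $e^{C}$; consequently $\|T(F)\|_{H^p}\le C(\|\psi\|_{L^p(\T)}+|c|)$ holds uniformly in $F$ (so the homotopy estimate is not even needed), and $T(H^p\setminus\{0\})$ lies in a compact set whose closed convex hull $\mathcal{C}$ is compact, convex, mapped into itself by $T$, and does not contain $0$: its elements have real trace of the form $\psi v$ with $e^{-C}\le v\le e^{C}$ (a property preserved under convex combinations and, by equicontinuity of the weights, under closure), while in the degenerate case $\psi\equiv0$, $c\ne0$ the range of $T$ consists of imaginary constants of fixed sign bounded away from zero (or one solves that case by hand). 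Schauder's theorem on $\mathcal{C}$ then yields the fixed point. Finally, be aware that the constant you obtain, like the one in Theorem~\ref{Dirichlet2}, also depends on $\|\alpha\|_{L^r(\D)}$ through \eqref{normes}; the clause in the statement that $C$ depends only on $p$ and $r$ has to be read with that dependence understood.
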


Theorem~\ref{Dirr} generalizes  the M. Riesz theorem:
for every $\psi\in L^p_\R(\T)$ and $c\in\R$ 
there is a unique $\psi^\sharp_c\in L^p_\R(\T)$ 
(a generalized conjugate of $\psi$) such that
$\int_\T\psi^\sharp_c=c$ and $\psi+i\psi^\sharp_c=w_\T$ for some 
$w\in G^p_\alpha(\D)$. Moreover, 
$\|\psi^\sharp\|_{L^p(\T)}\le C(\|\psi\|_{L^p(\T)}+|c|)$.
The theorem below extends this result to the case $r=2$ where
solutions to \eqref{eq:w} may be locally unbounded.

\begin{thm}
\label{Dirichlet2}
Let $\alpha\in L^2(\D)$ and $1<p<\infty$. 
For every  
$\psi\in L^p_\R(\T)$ and $c\in\R$ there is a unique
$w\in G^p_\alpha(\D)$ such that 
$\text{Re}\,w_\T=\psi$ and $\int_\T \text{Im}\,w_\T=c$. 
Moreover, 
\begin{equation}
\label{estu2}
\|w\|_{G^p_\alpha(\D)}\le C(\|\psi\|_{L^p(\T)}+|c|),
\end{equation}
where $C$ depends only on $p$ and $|\alpha|$.
\end{thm}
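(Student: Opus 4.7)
Suppose $w\in G^p_\alpha(\D)$ satisfies $\mathrm{Re}\,w_\T=0$ and $\int_\T\mathrm{Im}\,w_\T=0$. Applying Lemma~\ref{expsf2}(ii) with $\theta_0=-\pi/2$, $\psi=0$, $\lambda=0$, factor $w=e^{s^\mathfrak{r}}F^\mathfrak{r}$ with $s^\mathfrak{r}|_\T$ real-valued, $\int_\T s^\mathfrak{r}|_\T\,d\Lambda=0$, and $F^\mathfrak{r}$ holomorphic in $\D$. Since $e^{s^\mathfrak{r}|_\T}>0$ a.e., the condition $\mathrm{Re}\,w_\T=0$ gives $\mathrm{Re}\,F^\mathfrak{r}|_\T=0$ a.e. As noted before Theorem~\ref{trace2}, $F^\mathfrak{r}\in\bigcap_{1<\ell<p}H^\ell$; hence the classical M.~Riesz theorem in any such $H^\ell$ forces $F^\mathfrak{r}\equiv ic$ for some real $c$. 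Then $\int_\T\mathrm{Im}\,w_\T=c\int_\T e^{s^\mathfrak{r}|_\T}d\Lambda=0$, and since $\int_\T e^{s^\mathfrak{r}|_\T}d\Lambda\geq 2\pi$ by Jensen's inequality (using the zero-mean property), this forces $c=0$, so $w\equiv 0$.

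\textbf{Existence: approximation setup.} Choose $(\alpha_n)\subset\mathcal{D}(\D)$ with $\alpha_n\to\alpha$ in $L^2(\D)$. Since $\alpha_n\in L^r(\D)$ for every $r>2$, Theorem~\ref{Dirr} produces a unique $w_n\in G^p_{\alpha_n}(\D)$ with $\mathrm{Re}\,w_{n,\T}=\psi$ and $\int_\T\mathrm{Im}\,w_{n,\T}=c$. The entire existence argument will hinge on proving a uniform a~priori estimate
\[
\|w_n\|_{G^p_{\alpha_n}(\D)}\leq C\bigl(\|\psi\|_{L^p(\T)}+|c|\bigr),
\]
with $C$ depending only on $p$ and $|\alpha|$, followed by the extraction of a limit.

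\textbf{Uniform bound via the multiplier theorem (main obstacle).} Write $w_n=e^{s_n^\mathfrak{i}}F_n^\mathfrak{i}$ with $s_n^\mathfrak{i}$ purely imaginary on $\T$ and $\int_\T\mathrm{Im}\,s_n^\mathfrak{i}=0$; by Corollary~\ref{BNreal} and \eqref{contnsa}, $\|s_n^\mathfrak{i}\|_{W^{1,2}(\D)}\leq C\|\alpha_n\|_{L^2(\D)}\leq C'$ uniformly. Split $s_n^\mathfrak{i}=iU_n+v_n$, where $U_n$ is the real-valued harmonic extension to $\D$ of the real trace $-is_n^\mathfrak{i}|_\T$; then $|e^{iU_n}|\equiv 1$ in $\D$, and $v_n\in W^{1,2}_0(\D)$ is uniformly bounded. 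The multiplier theorem from Section~\ref{app}, applied to $\pm v_n$, shows that $e^{\pm v_n}$ multiplies functions whose non-tangential maximal function lies in $L^p(\T)$ into the pseudo-Hardy class defined by \eqref{esssuppw}, with norm controlled by $\|v_n\|_{W^{1,2}}$; combined with \eqref{ntborne} this yields both $\|w_n\|_{G^p_{\alpha_n}}\leq C\|F_n^\mathfrak{i}\|_{H^p}$ and its reverse, uniformly in $n$. The uniform bound on $w_n$ therefore reduces to a uniform bound on $\|F_n^\mathfrak{i}\|_{H^p}$. Since $|e^{s_n^\mathfrak{i}|_\T}|=1$, the Dirichlet conditions on $w_n$ translate into the Riemann--Hilbert problem $\mathrm{Re}(e^{iU_n|_\T}F_n^\mathfrak{i}|_\T)=\psi$, $\int_\T\mathrm{Im}(e^{iU_n|_\T}F_n^\mathfrak{i}|_\T)=c$, and I expect the hardest step to be solving this problem with uniform estimates: the plan is to apply the weighted M.~Riesz theorem (Hunt--Muckenhoupt--Wheeden, cf.~\eqref{HMW}) to a weight associated with $U_n|_\T$, whose Muckenhoupt $A_p$ character with uniform constants I expect to follow from the uniform $W^{1/2,2}(\T)$-bound on $U_n|_\T$ together with Moser--Trudinger type exponential integrability for zero-mean $W^{1/2,2}(\T)$ functions.

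\textbf{Passage to the limit.} Granted the uniform bound, the sequence $(F_n^\mathfrak{i})$ is bounded in $H^p$ and a subsequence converges weakly in $H^p$ and locally uniformly in $\D$ to some $F^\mathfrak{i}$; the uniform $W^{1,2}$-bound on $s_n^\mathfrak{i}$, the Rellich--Kondrachov theorem, and the identity $\bar\partial s_n^\mathfrak{i}=\alpha_n(\bar F_n^\mathfrak{i}/F_n^\mathfrak{i})\exp(-2i\mathrm{Im}\,s_n^\mathfrak{i})$ then allow one to argue, in the manner used in the proof of Theorem~\ref{trace2}(iii) and via~\eqref{intdbar}, that (after extraction) $s_n^\mathfrak{i}\to s^\mathfrak{i}$ strongly in $W^{1,2}(\D)$. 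The limit $w=e^{s^\mathfrak{i}}F^\mathfrak{i}$ lies in $G^p_\alpha(\D)$, satisfies $\bar\partial w=\alpha\bar w$ by distributional passage in $\alpha_n\bar w_n\to\alpha\bar w$, and inherits both the Dirichlet conditions (by continuity of the trace) and the estimate~\eqref{estu2} from the uniform bound.
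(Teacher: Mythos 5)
Your uniqueness argument and your overall skeleton (regularize $\alpha$, solve via Theorem~\ref{Dirr}, extract a limit) are essentially the paper's, but the step you yourself single out as the hardest one contains a genuine gap. You propose to obtain uniform (in $n$ and $\psi$) control of the Muckenhoupt $A_p$ constant of the weight attached to $U_n|_\T$ from the uniform $W^{1/2,2}(\T)$ bound together with Moser--Trudinger exponential integrability. That derivation is insufficient: exponential integrability plus a bound on the $W^{1/2,2}$ (hence BMO/VMO) \emph{norm} does not by itself yield a uniform $A_p$ constant for the exponentials. What Lemma~\ref{ApVMO} actually requires is a \emph{uniform modulus}: the local oscillations $M_{h_n}(I)$ of \eqref{defMhG} must become small, uniformly in $n$, as $\Lambda(I)\to0$; by \eqref{VMOW1/2} this amounts to uniform absolute continuity of the local $W^{1/2,2}$ seminorms, which a bound on the norms alone does not supply. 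The same defect affects your use of the multiplier theorem: the constant in Theorem~\ref{somprodBN} is not ``controlled by $\|v_n\|_{W^{1,2}}$'' but by the scale $\varepsilon$ at which $\|\partial v_n\|_{L^2(Q_\varepsilon\cap\D)}$ is small, so uniformity in $n$ again requires equi-smallness at small scales, not bounded norms. This is precisely why the constant in \eqref{estu2} depends on $|\alpha|$ as a function and not merely on $\|\alpha\|_{L^2}$. The paper closes this gap structurally: since $|\bar\partial s_n|=|\alpha_n|$ pointwise and $s_n$ is given by the explicit Cauchy-type formulas \eqref{normreel}--\eqref{normim}, Proposition~\ref{evalderCauchy} (equicontinuity of Cauchy transforms, with smallness governed only by the local $L^2$ concentration of $|\alpha|$) combined with Lemma~\ref{ecmoyW120} gives a uniform VMO modulus for $\tr_\T s_n$, hence uniform $A_p$ constants via Lemma~\ref{ApVMO} and uniform constants in Theorem~\ref{somprodBN}. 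Without an argument of this type your a priori bound, and with it the existence proof and \eqref{estu2}, is not established.

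Two secondary points. First, the ``reverse'' inequality $\|F_n^\mathfrak{i}\|_{H^p}\le C\|w_n\|_{G^p_{\alpha_n}}$ cannot be had by applying Theorem~\ref{somprodBN} to $-v_n$: that would require the non-tangential maximal function of (essentially) $w_n$ to lie in $L^p(\T)$, which is not available --- for $r=2$ solutions need not be nontangentially bounded anywhere (see the remark after Theorem~\ref{trace2}); the paper obtains the corresponding implication by a weak-convergence argument on the traces $\tr_\T w_\rho$ instead. Second, once a uniform $A_p$ bound is in hand, your Riemann--Hilbert reduction with the unimodular coefficient $e^{iU_n|_\T}$, after multiplication by the analytic completion of $U_n$, produces exactly the weight $e^{\tr_\T s_n^{\mathfrak r}}$ that the paper works with (the two normalizations $s^{\mathfrak i}$ and $s^{\mathfrak r}$ differ by a holomorphic function whose boundary real and imaginary parts are conjugate), so your route is a repairable variant of the paper's argument rather than a genuinely different proof.
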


\begin{proof}
We first show existence. Assume that 
$\psi$ and $c$ are not both zero; otherwise $w\equiv0$ will do.

Let $(\alpha_n)$ be a sequence of functions in $L^\infty(\D)$ converging
to $\alpha$ in $L^2(\D)$. 
By Theorem~\ref{Dirr}, for every $n$ there exists
$w_n\in G_{\alpha_n}^p(\D)$ such that $\mbox{Re}\,{w_n}_{|\T}=\psi$ and
$\int_\T \mbox{Im}\,w_n=c$. 
Notations being as in Section~\ref{Gpalpha},
let us write $w_n=e^{s_n^\mathfrak{r}}F_n^\mathfrak{r}$ where 
$s_n^\mathfrak{r}\in W^{1,2}(\D)$ is real with zero mean on $\T$ while
$F_n^\mathfrak{r}\in H^p$.
Below, we drop the superscript
$\mathfrak{r}$ for simplicity.

It follows from \eqref{contnsa} that
$\|s_n\|_{W^{1,2}(\D)}\leq C_0\|\alpha_n\|_{L^2(\D)}$ for some 
absolute constant $C_0$, hence 
$\|s_n\|_{W^{1,2}(\D)}$ is bounded uniformly in $n$.
In view of the
Rellich--Kondrachov theorem, we 
can find a subsequence, again denoted by $(s_n)$,  
converging to some function $s$ both pointwise on $\D$ and  in
$L^\ell(\D)$ for  all $\ell\in[1,\infty)$. By the trace theorem and 
the non integral version of the Rellich--Kondrachov theorem, we 
may further assume that  $\tr_\T s_n$ 
converges to some function $h$ both pointwise a.e. on $\T$ and in 
$L_\R^\ell(\T)$. Moreover, 
convergence of $\alpha_n$ to $\alpha$
in $L^2(\DD)$ entails, because of
\eqref{normes2}, 
that  $e^{\pm s_n}$ are bounded in
$W^{1,q}(\D)$, independently of $n$ and $\psi$, for each $q\in[1,2)$.
So, invoking again the trace and the Rellich--Kondrachov theorems, 
we may assume upon refining $s_n$ further that
$e^{\pm \tr_\T s_n}$ converges to their pointwise limits $e^{\pm h}$ in  
$L^\ell(\T)$, for all $\ell\in[1,\infty)$. 

Thus, by H\"older's inequality, 
$\mbox{Re}\,(F_n)_{|\T}=e^{-\tr_\T s_n}\psi$  
converges to $e^{-h}\psi$ in $L^\lambda(\T)$
for any $\lambda\in[1,p)$. 
Continuity of the conjugate
operator now implies that $\widetilde{\mbox{Re}\,(F_n)_{|\T}}$
in turn converges  to $\widetilde{e^{-h}\psi}$ in $L^\lambda(\T)$.
Since $\int_\T \mbox{Im}\,w_n=c$, we see by inspection that 
$\mbox{Im}\,(F_n)_{|\T}=\widetilde{\mbox{Re}\,(F_n)_{|\T}}+c_n $
where the constant $c_n$ is such that
\begin{equation}
\label{cim}
c_n\int_\T e^{\tr_\T s_n}+\int_\T e^{\tr_\T s_n}
\widetilde{\mbox{Re}\,(F_n)_{|\T}}=c.
\end{equation}
The first integral in \eqref{cim} converges to 
$\int_\T e^h>0$, and the second integral  converges to
$\int_\T e^{h} \,\widetilde{e^{-h}\psi}$ by H\"older's inequality.
Therefore, $(c_n)$ converges to 
\begin{equation}
\label{c0}
c_0:=\left(c-\int_\T e^{h} \,\widetilde{e^{-h}\psi}\right)/\int_\T e^h, 
\end{equation}
and subsequently 
$(F_n)_{|\T}$ converges to 
\begin{equation}
\label{trF}
F_{\T}:=e^{-h}\psi +i\,\widetilde{e^{-h}\psi}+ic_0
\end{equation} in 
$L^\lambda(\T)$, for all $ \lambda\in[1,p)$. 
Thus, $F_n$ converges in $H^\lambda$ to $F$, the Poisson integral of
$F_\T$. Note that $F$ is not identically zero; otherwise, $\psi\equiv0$ and $c=0$, contrary to
our initial assumption.

The above argument and the dominated convergence theorem  
give us that $\alpha_n e^{-2i\text{\rm Im}\,s_n}\bar F_n/F_n$ converges to
$\alpha e^{-2i\text{\rm Im}\,s}\bar F/F$ in $L^2(\D)$.
Next, $\bar\d s_n= \alpha_n \times\allowbreak\times e^{-2i\text{\rm Im}\,s_n}\bar F_n/F_n$. 
By Lemma~\ref{arglisse},
applying \eqref{intdbar} with $A=s_n-s_m$, 
$a=\bar\partial s_n-\bar\partial s_m$, 
$\theta_0=-\pi/2$, $\psi\equiv0$, and $\lambda=0$, 
we conclude that $(s_n)$ is a Cauchy sequence in $W^{1,2}(\D)$  
which must therefore converge to $s$. 
Hence, $s\in W^{1,2}(\D)$ and $h=\tr_\T s$.
Since we get in the limit
that $\bar\partial s=(\alpha \bar F/F) e^{-2i\text{Im}\,s}$, we see from
Lemma~\ref{arglisse} that $w:=e^sF$ satisfies \eqref{eq:w}.
Moreover, if we write $w=e^{s^\mathfrak{r}}F^\mathfrak{r}$ 
in the notation of Section~\ref{Gpalpha}, 
we find that $s^\mathfrak{r}=s$ and 
$F^\mathfrak{r}=F$ because $s$ inherits from $s_n$ the properties
$\text{Im}\, \tr_{\T}s=0$ and  $\int_\T \text{Re}\,s=0$.
As $F\in H^\lambda$ for all $ \lambda\in[1,p)$, we further deduce
from the discussion before \eqref{exw} that $w\in G^\lambda_\alpha$
for all such $\lambda$. By inspection of \eqref{trF} we get
\begin{multline}
\label{trw}
w_\T=e^{\tr_\T s}F_{|\T}=e^{\tr_\T s}(e^{-\tr_\T s}\psi +i
\,\widetilde{e^{-\tr_\T s}\psi}+ic_0)\\=
\psi+i\bigl(e^{\tr_\T s}\,\widetilde{e^{-\tr_\T s}\psi}+e^{\tr_\T s}c_0\bigr),
\end{multline}
where we use the fact that $h=\tr_\T s$ is real-valued.
In particular, \eqref{trw} entails that $\mbox{Re}\,w_\T=\psi$.

To show that 
$w\in G^p_\alpha(\D)$, we must prove in view of Theorem~\ref{trace2}
that $w_\T\in L^p(\T)$. To do this, note that 
$\psi\in L^p(\T)$ by assumption and that $e^{\tr_\T s}c_0\in L^p(\T)$ 
by the trace and Sobolev embedding theorems.
Furthermore,
$p\,{\tr_\T s}\in W^{1/2,2}(\T)\subset VMO(\T)$ by \eqref{VMOW1/2}.  
By Lemma~\ref{ApVMO}, $e^{p{\tr_\T s}}$ satisfies condition $A_p$. Thus, using \eqref{HMW}, we obtain 
\begin{equation}
\label{uM}
\|e^{\tr_\T s}\,\widetilde{e^{-\tr_\T s}\psi}\|_p^p
\leq C''\,\|\psi\|_p^p,\qquad C''=C''(\{e^{p{\tr_\T s}}\}_{A_p});
\end{equation}
in view of \eqref{trw} we have  
$w_\T\in L^p(\T)$.
This gives the existence part of Theorem~\ref{Dirichlet2}.

As for uniqueness, let $w_1, w_2\in G^p_\alpha(\D)$ be two solutions.
Set $v:=w_1-w_2\in G^p_\alpha(\D)$, so that 
$\mbox{Re}\,v_\T=0$, 
$\int_\T \mbox{Im}\,v_\T=0$.  If we write 
$v=e^{\sigma^\mathfrak{r}}\Phi^\mathfrak{r}$,
we observe that $\text{Re}\,(\Phi^\mathfrak{r})_{|\T}\equiv0$, and hence 
the $H^\lambda$ function $\Phi^\mathfrak{r}$, $1\leq\lambda<p$,
is a pure imaginary constant, say $\zeta$. 
Thus, $v=\zeta e^s$ 
and the relations $\int_\T \mbox{Im}\,v_\T=0$, 
$\int_\T e^{\tr_\T \sigma}>0$ 
give us 
$\zeta=0$ so that $v=0$, as desired.

Finally, we verify \eqref{estu2}. By \eqref{normeq}, it suffices to prove that
$$
\|w_\T\|_{L^p(\T)}\le C(\|\psi\|_{L^p(\T)}+|c|),
$$
where $C$ depends only on $p$.
By  \eqref{uM}, \eqref{trw}, \eqref{c0}  and H\"older's inequality, 
we need only
establish that $\|e^{\tr_\T s}\|_{L^p(\T)}$, $\{e^{p{\tr_\T s}}\}_{A_p}$,
and $1/\int_\T e^{\tr_\T s}$
are bounded from above independently of $\psi$.
We pointed out earlier in the proof that
$e^{s_n}$ are bounded in
$W^{1,q}(\D)$, independently of $n$ and $\psi$, for each $q\in[1,2)$.
Since $s_n$ tends to $s$ in $W^{1,2}(\D)$, boundedness of 
$\|e^{\tr_\T s}\|_{L^p(\T)}$ 
follows from Proposition~\ref{embexpS} and the (non-integral version of) the 
Sobolev embedding theorem.
Next, \eqref{contnsa} yields that
$\|s\|_{W^{1,2}(\D)}\leq C_0\|\alpha\|_{L^2(\D)}$ for some 
absolute constant $C_0$.
 Thus,
using concavity of $\log$, the Schwarz inequality, and the trace theorem,
we get for some absolute constant $C_1$ that
\begin{multline*}
\log\left( \frac{1}{2\pi}\int_\T e^{s(\xi)}\,|d\xi|\right)\geq
\frac{1}{2\pi}\int_\T s(\xi)\,|d\xi|\\ \ge
-\|s\|_{L^2(\T)}\ge-C_1\|s\|_{W^{1,2}(\D)}\geq
-C_0C_1\|\alpha\|_{L^{2}(\D)},
\end{multline*}
showing that  $\int_\T e^{\tr_\T s}\ge \exp\{-C_0C_1\|\alpha\|_{L^2(\D)}\}$.

Finally, to majorize 
$\{e^{p{\tr_\T s}}\}_{A_p}$
independently of $\psi$, it suffices by Lemma~\ref{ApVMO} to prove that 
$M_{\tr_\T s}(J)$ (see definition \eqref{defMhG}) 
can be made arbitrarily small as $\Lambda(J)\to 0$, 
uniformly with respect to $\psi$, as $J$ ranges over open arcs on $\T$.
Let $\omega$ be 
be a strictly
positive function on $(0,+\infty)$ such 
that
$\|\alpha\|_{L^2(Q_{\omega(\varepsilon)}\cap\D)}<\varepsilon$ as soon as 
$Q_{\omega(\varepsilon)}$ is a square  of sidelength $\omega(\varepsilon)$. 
By \eqref{normreel} and 
Proposition~\ref{evalderCauchy}, there is
a strictly positive function  $\widetilde{\omega}$ on  $(0,+\infty)$, 
depending only on $\omega$, such that \eqref{borderCa} holds.
Now, if $\Lambda(J)<1$, it is elementary to check 
that $R(J,\Lambda(J))$ ({\it cf.} definition \eqref{defR})
 is contained in a square of
sidelength $\Lambda(J)$. Therefore, if we pick
$\Lambda(J)<\min\{1/2,\widetilde{\omega}(\eta)\}$, we deduce from 
\eqref{VMOW1/2}  and Lemma~\ref{ecmoyW120} that
$M_{\tr_\T s}(J)\leq C_1 \eta$, where $C_1$ is an absolute constant.
This completes the proof of Theorem~\ref{Dirichlet2}.
\end{proof}

\section{Dirichlet problem for $\exp-W^{1,2}$ conductivity}
\label{sectDirichlet}

The following connection 
between pseudo-holomorphic functions and conductivity equations
is instrumental in \cite{ap} and was investigated in the context of 
pseudo-holomorphic Hardy spaces  in \cite{BLRR,BFL} when $r>2$.
We start by a 2-d isotropic conductivity equation with $\exp$-Sobolev smooth
coefficient:
\begin{equation}
\label{eq:u}
{\rm div}\left(\sigma\nabla u\right)=0\quad{\rm in }\ \Omega,
\qquad \sigma\geq0, \quad \log\sigma\in W^{1,r}(\Omega),\quad r\in[2,\infty).
\end{equation}
When $r>2$, the assumption that $\log\sigma\in W^{1,r}(\Omega)$ simply means 
that $\sigma\in W^{1,r}(\Omega)$ and  that $0<c<\sigma$
(strict ellipticity). If $r=2$,
then $\sigma$ lies in $W^{1,q}(\Omega)$ for all $q\in[1,2)$ by 
Proposition~\ref{embexpS}, but it is not necessarily bounded away from zero nor 
infinity which makes this case particularly interesting because 
\eqref{eq:u} may no longer be strictly elliptic.

Put $\nu:=(1-\sigma)/ (1+\sigma)$ and consider the
conjugate Beltrami equation:
\begin{equation}
\label{eq:f}
\overline\partial f  = \nu  \, \overline{\partial f} \quad\mbox{in } \Omega,
\quad -1\leq \nu\leq1, \quad \text{arctanh}\,\nu\in W^{1,r}(\Omega),\,\,\, 
r\in[2,\infty),
\end{equation}
where the assumptions on $\nu$ correspond to those on
$\sigma$
given in \eqref{eq:u}. The fact that
$\sigma\in W^{1,q}(\Omega)$ for all $q\in[1,2)$ implies easily that 
the same holds for $\nu$.  
If we restrict ourselves to solutions $f\in L^\gamma_{loc}(\Omega)$ for some
$\gamma>r/(r-1)$ and write  $f=u+iv$ to separate the real and the imaginary parts, 
we find that 
\eqref{eq:f} is equivalent to the generalized Cauchy--Riemann system:
\begin{equation} \label{system}
\left\{
\begin{array}{l}
\partial_xv=-\sigma\partial_y u,\\
\partial_yv=\sigma\partial_xu,
\end{array}
\right.
\end{equation}
whose compatibility condition is the conductivity equation
\eqref{eq:u}. Hence, \eqref{eq:f} is a means 
to rewrite \eqref{eq:u} as a complex equation of the first order.
Now, if we set 
$$
w:=\frac{f-\nu\overline{f}}{\sqrt{1-\nu^2}}=\sigma^{1/2}u+i\sigma^{-1/2}v,
\qquad \alpha=\bar\partial\log\sigma^{1/2}\in L^r,
$$
then a straightforward computation using \eqref{system}
shows that \eqref{eq:w} holds. Note that any constant $c$ solves
\eqref{eq:f}, the corresponding solution in \eqref{eq:w} being
$\sigma^{1/2}\text{Re}\,c+i\sigma^{-1/2}\text{Im}\,c$.

The preceding discussion makes
the study of \eqref{eq:f} essentially equivalent to that of \eqref{eq:w},
\eqref{eq:u}.
In particular, Theorem~\ref{Dirichlet2} translates into the following result 
that seems to be the first to describe a class of non strictly elliptic equations with 
unbounded coefficients for which the Dirichlet problem is well-posed with
(weighted) $L^p$-boundary data.

\begin{thm}
\label{Dirichletu}
Let $\sigma\geq0$ be such that $\log\sigma\in W^{1,2}(\D)$, and fix 
$p\in(1,\infty)$. For every $\psi$ such that $\psi\,\tr_\T\sigma^{1/2}\in L^p(\T)$,
there exists a unique solution $u$ to \eqref{eq:u} such that 
\begin{equation} \label{esssuppu}
\sup_{0<\rho  < 1}
\Bigl(\int_{\T_\rho} \left\vert
 u(\xi)\right\vert^p \sigma^{p/2}(\xi)|d\xi|\Bigr)^{1/p} <+\infty 
 \end{equation}
and $\lim_{\rho\to1} \tr_\T(u_\rho\sigma^{1/2}_\rho)=\psi\,\tr_\T\sigma^{1/2}$
in $L^p(\T)$. 
Moreover, the supremum in \eqref{esssuppu} is less than
$ C\|\psi\sigma^{1/2}\|_{L^p(\T)}$ for some $C=C(p,\sigma)$.
 \end{thm}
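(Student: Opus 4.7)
The plan is to deduce Theorem~\ref{Dirichletu} from Theorem~\ref{Dirichlet2} via the pseudo-holomorphic substitution recalled at the beginning of this section. I set $\alpha:=\bar\d\log\sigma^{1/2}$, which lies in $L^2(\D)$ because $\log\sigma\in W^{1,2}(\D)$, and $\Psi:=\psi\,\tr_\T\sigma^{1/2}$, which lies in $L^p(\T)$ by hypothesis. Under the substitution $w=\sigma^{1/2}u+i\sigma^{-1/2}v$, with $v$ a stream function of $u$, the equation \eqref{eq:u} is equivalent to \eqref{eq:w}; the pointwise bound $|\sigma^{1/2}u|=|\mbox{Re}\,w|\le|w|$ is what ties the two supremum conditions together.

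For existence, I apply Theorem~\ref{Dirichlet2} with data $(\Psi,0)$ to produce a unique $w\in G^p_\alpha(\D)$ such that $\mbox{Re}\,w_\T=\Psi$ and $\int_\T\mbox{Im}\,w_\T=0$, and set $u:=\sigma^{-1/2}\mbox{Re}\,w$. Then $u$ solves \eqref{eq:u}; the identity $u_\rho\sigma^{1/2}_\rho=\mbox{Re}\,w_\rho$ combined with Theorem~\ref{trace2}(i) yields the required $L^p(\T)$ boundary convergence, and \eqref{estu2} furnishes the supremum bound $C\|\psi\,\tr_\T\sigma^{1/2}\|_{L^p(\T)}$.

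For uniqueness, given any $u$ satisfying the three conditions, I would build an associated $W\in G^p_\alpha(\D)$ sharing the boundary data of the $w$ produced above; Theorem~\ref{Dirichlet2} would then force $W=w$, hence $u=\sigma^{-1/2}\mbox{Re}\,w$. Concretely, since $\D$ is simply connected, the $1$-form $-\sigma\d_y u\,dx+\sigma\d_x u\,dy$ is closed by \eqref{eq:u} and admits a primitive $v$, unique up to a real constant; I set $W:=\sigma^{1/2}u+i\sigma^{-1/2}v$, which solves \eqref{eq:w}. Shifting $v$ by $k\in\R$ shifts $W$ by $ik\sigma^{-1/2}$, and since $\tr_\T\sigma^{-1/2}\in L^\ell(\T)$ for all $\ell<\infty$ by Proposition~\ref{embexpS}, the constant can be uniquely chosen so that $\int_\T\mbox{Im}\,W_\T=0$.

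The hard part will be proving $W\in G^p_\alpha(\D)$, which amounts to an M.~Riesz-type estimate: the uniform $L^p(\T_\rho)$-bound on $\mbox{Re}\,W=\sigma^{1/2}u$ supplied by \eqref{esssuppu} must propagate to $\mbox{Im}\,W=\sigma^{-1/2}v$. My approach is first to establish $W\in L^\gamma_{loc}(\D)$ for some $\gamma>2$ so that Lemma~\ref{expsf2} applies, then to use the normalized factorization $W=e^{s^\mathfrak{r}}F^\mathfrak{r}$ of Section~\ref{Gpalpha}: the relation $\mbox{Re}\,F^\mathfrak{r}_\T=e^{-\tr_\T s^\mathfrak{r}}\Psi$, combined with the Muckenhoupt $A_p$ property of the weight $e^{p\tr_\T s^\mathfrak{r}}$ (Lemma~\ref{ApVMO}, since $s^\mathfrak{r}\in W^{1/2,2}(\T)\subset VMO(\T)$) and the Hunt--Muckenhoupt--Wheeden theorem \eqref{HMW}, will force $F^\mathfrak{r}\in H^p$, whence $W\in G^p_\alpha(\D)$ just as in the proof of Theorem~\ref{Dirichlet2}.
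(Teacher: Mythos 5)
Your existence half is exactly the route the paper intends: Theorem~\ref{Dirichletu} is presented there as a direct translation of Theorem~\ref{Dirichlet2} under the substitution $w=\sigma^{1/2}u+i\sigma^{-1/2}v$, $\alpha=\bar\d\log\sigma^{1/2}$, and your deduction of \eqref{esssuppu}, of the boundary convergence from Theorem~\ref{trace2}~(i), and of the bound $C\|\psi\,\tr_\T\sigma^{1/2}\|_{L^p(\T)}$ from \eqref{estu2} is correct (the paper gives no more detail than this translation).

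The uniqueness half, however, has a genuine gap at the very step you flag as ``the hard part''. Lemma~\ref{expsf2} only yields $W=e^{s^\mathfrak{r}}F^\mathfrak{r}$ with $F^\mathfrak{r}$ \emph{merely holomorphic}: at that stage nothing places $F^\mathfrak{r}$ (equivalently $W$) in any Hardy or Smirnov class, hence $F^\mathfrak{r}$ has no boundary values, and the identity ${\rm Re}\,F^\mathfrak{r}_{|\T}=e^{-\tr_\T s^\mathfrak{r}}\,\psi\,\tr_\T\sigma^{1/2}$ on which your $A_p$/Hunt--Muckenhoupt--Wheeden step rests is simply not available. The hypotheses on $u$ control only ${\rm Re}\,W$ on the circles $\T_\rho$, and inside $\D$ one has ${\rm Re}\,W=e^{{\rm Re}\,s^\mathfrak{r}}\,{\rm Re}\bigl(e^{i\,{\rm Im}\,s^\mathfrak{r}}F^\mathfrak{r}\bigr)$, which mixes ${\rm Re}\,F^\mathfrak{r}$ and ${\rm Im}\,F^\mathfrak{r}$; so no harmonic-majorant or Poisson-representation argument gives $F^\mathfrak{r}\in H^\lambda$ for some $\lambda$, and this a priori statement (uniform weighted $L^p$ bounds on ${\rm Re}\,W$ over circles imply $W\in G^\lambda_\alpha(\D)$ for some $\lambda$) is precisely the M.~Riesz-type content that has to be proved --- your sketch presupposes it. Two further points: even granting $F^\mathfrak{r}\in H^p$, for $r=2$ this does \emph{not} imply $W\in G^p_\alpha(\D)$ (the $\mathfrak{r}$- and $\mathfrak{i}$-factorizations are not interchangeable, cf.\ the discussion around \eqref{exw}); one must in addition show $W_\T\in L^p(\T)$ via the explicit trace formula and \eqref{HMW}, and then invoke Theorem~\ref{trace2}~(iv), which itself needs $W\in G^\lambda_\alpha(\D)$ for some $\lambda\in(1,\infty)$ beforehand. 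Finally, normalizing $\int_\T{\rm Im}\,W_\T=0$ before knowing that $W$ has a trace is circular; this is repairable only because $i\sigma^{-1/2}$ itself lies in $G^p_\alpha(\D)$ (Lemma~\ref{bornewr} applied to $e^{-\frac12\log\sigma}$), so the constant can be adjusted after membership is established. As written, then, the uniqueness argument does not go through; the missing a priori estimate is also left implicit in the paper, but your proposal does not supply it.
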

 
\section{Appendix}
\label{app}

\subsection{Mean growth of Cauchy transforms}
\label{croissC2}
In this subsection we prove estimate \eqref{estdRC2}.
First, we evaluate $\mathcal{C}_2(h)_{\D_R}$, the mean of $\mathcal{C}_2(h)$ over 
$\D_R$, when $h\in L^2(\C)$ and $R\geq1$.
To this end, we use the following identity (see \cite[Section 4.3.2]{aim}):
\begin{equation}
\label{Cauchycara}
\mathcal{C}(\chi_{\D_R})(t)=\left\{
\begin{array}{lcl}\bar t&\text{if}&|t|\leq R,\\
R^2/t&\text{if}& |t|>R.
\end{array}
\right.
\end{equation}
If $h$ has compact support, we deduce from
\eqref{defCauchy2}, \eqref{Cauchycara}  and  Fubini's theorem that
\begin{align}
\mathcal{C}_2(h)_{\D_R}&=
\frac{1}{\pi R^2}\int_{\D_R}\left(
\frac{1}{\pi}\int_\C \frac{h(t)}{z-t}\,dm(t)+
\frac{1}{\pi}\int_{\C\setminus\D}\frac{h(t)}{t}\,dm(t)
\right)dm(z) \nonumber\\
&=-\frac{1}{\pi R^2}\int_{\D_R} h(t)\bar t\,dm(t)
-\frac{1}{\pi}\int_{\C\setminus\D_R} \!\frac{h(t)}{t}\,dm(t)
+\frac{1}{\pi}\int_{\C\setminus\D}\frac{h(t)}{t}\,dm(t)
\nonumber\\
&=-\frac{1}{\pi R^2}\int_{\D_R} h(t)\bar t\,dm(t)
+\frac{1}{\pi}\int_{1\leq|t|\leq R}\frac{h(t)}{t}\,dm(t).
\label{meanh}
\end{align}
By density argument, \eqref{meanh} holds for every $h\in L^2(\C)$.
Next, by \eqref{meanh} and the Schwarz inequality, we have
\begin{equation}
\label{estmeanpsi}
\left|\mathcal{C}_2(h)_{\D_R}\right|\leq
\frac{\|h\|_{L^2(\C)}}{\sqrt{2\pi}}
+\sqrt{\frac{2}{\pi}}\,\|h\|_{L^2(\C)}\left(\log R\right)^{1/2},
\qquad R\geq1.
\end{equation}
In another connection, by the Poincar\'e  inequality, we have
\begin{multline}
\label{ecartpsimoy}
\|\mathcal{C}_2(h)-\mathcal{C}_2(h)_{\D_R}\|_{L^2(\D_R)}
\leq C_R\bigl(\|h\|_{L^2(\D_R)}
+\|\mathcal{B}(h)\|_{L^2(\D_R)}\bigr)\\ \le 2C_R\|h\|_{L^2(\C)},
\end{multline}
where $C_R$ is the best constant in \eqref{estfonc} for $p=2$ and
$\Omega=\D_R$. Finally, 
since
$$
\frac{\|\mathcal{C}_2(h)\|_{L^2(\D_R)}}{\sqrt{\pi}R}\leq 
\frac{\|\mathcal{C}_2(h)-\mathcal{C}_2(h)_{\D_R}\|_{L^2(\D_R)}}{\sqrt{\pi}R}
+\left|\mathcal{C}_2(h)_{\D_R}\right|,
$$
\eqref{estdRC2} follows from
\eqref{estmeanpsi}, \eqref{ecartpsimoy} and the fact that
$C_R=RC_1$ by homogeneity.

\subsection{Functions of vanishing  mean oscillation}
The space $BMO(\T)$ of functions with bounded mean
oscillation on the unit circle  consists of the functions  
$h\in L^1(\T)$ such that
\begin{multline}
\label{defBMOT}
\|h\|_{BMO(\T)}:=
\sup_{I}\frac{1}{\Lambda(I)}\int_{I}|h(t)-h_{I}|
\,d\Lambda(t)<\infty,\\ h_I:=\frac{1}{\Lambda(I)}\int_Ih(t)\,d\Lambda(t),
\end{multline}
where $\Lambda$  indicates arclength and $I$ ranges over all subarcs 
of $\T$. 
Note that $\|\cdot\|_{BMO(\T)}$ is a genuine norm 
modulo additive constants only. 
The space $VMO(\T)$ of functions with vanishing mean oscillation is
the subspace of $BMO(\T)$ consisting of those $h$ for which
\begin{equation}
\label{defVMOT}\lim_{\varepsilon\to0} \,\sup_{\Lambda(I)<\varepsilon}\frac{1}{\Lambda(I)}
\int_{I}|h(t)-h_{I}|
\,d\Lambda(t)=0.
\end{equation}
Actually, $VMO(\T)$ is the closure in $BMO(\T)$ of continuous 
functions \cite[Chapter VI, Corollary 1.3 \& Theorem 5.1]{gar}.
The John--Nirenberg theorem 
asserts that there exist absolute constants $C$, $c$ such that, 
for every $h\in BMO(\T)$, every arc $I\subset\T$, and any $\lambda>0$,
\begin{equation}
\label{JNT}
\frac{\Lambda(\{\xi\in I~:~|h(\xi)-h_I|>\lambda\}  )}{\Lambda(I)}\leq 
C\exp\left(\frac{-c\lambda}{\|h\|_{BMO(\T)}}\right);
\end{equation}
in fact one can take $C=e$ and $c=1/2e$, see  \cite[Theorem 7.1.6]{Grafakos}\footnote{The argument there is given on the line but it applies mutatis mutandis to the circle.}. We also need a quantitative version of the so-called integral 
form of the John--Nirenberg inequality\footnote{When $M_h(I)$ 
gets replaced by 
$\sup_{I'\subset I}\left(\frac{1}{\Lambda(I')}\int_{I'}|h-h_{I'}|^2\,d\Lambda\right)^{1/2}$ (a different but in fact equivalent quantity), the sharp constants in 
\eqref{espsI} were obtained in
\cite{SV}.}.
Given $h\in L^1(\T))$ and an arc $I\subset\T$, let us define
\begin{equation}
\label{defMhG}
M_h(I):=\sup_{I'\subset I}\frac{1}{\Lambda(I')}\int_{I'}|h-h_{I'}|\,d\Lambda,
\end{equation}
where the supremum is taken over all subarcs $I'\subset I$.

\begin{lem}
\label{expsumVMOG}
If $h\in BMO(\T)\setminus\{0\}$
and $I\subset\T$ is an arc, then
\begin{equation}
\label{espsI}
\int_Ie^{|h|/(4eM_h(I))}d\Lambda\leq (1+e)\Lambda(I)\,\,e^{|h_I|/(4eM_h(I))}.
\end{equation}
\end{lem}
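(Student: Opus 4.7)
The plan is to combine the layer-cake formula with a local version of the John--Nirenberg inequality. Set $M:=M_h(I)$ and fix the arc $I\subset\T$.

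First, I would note that the standard proof of \eqref{JNT} (via a Calder\'on--Zygmund stopping-time decomposition on $I$) only uses the mean oscillation of $h$ on subarcs of $I$; in particular it applies locally with $M_h(I)$ replacing $\|h\|_{BMO(\T)}$. So, with the sharp constants $C=e$, $c=1/(2e)$ mentioned in the excerpt,
\begin{equation}
\label{JNloc}
\frac{\Lambda\bigl(\{\xi\in I:|h(\xi)-h_I|>\lambda\}\bigr)}{\Lambda(I)}\le e\exp\Bigl(-\frac{\lambda}{2eM}\Bigr),\qquad \lambda>0.
\end{equation}

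Next, by the distribution-function identity applied to $F=e^{|h-h_I|/(4eM)}-1\ge0$ and the change of variable $\lambda=4eM\log(1+t)$,
\begin{equation*}
\int_I\bigl(e^{|h-h_I|/(4eM)}-1\bigr)d\Lambda
=\int_0^\infty\Lambda\bigl(\{|h-h_I|>\lambda\}\cap I\bigr)\,\frac{e^{\lambda/(4eM)}}{4eM}\,d\lambda.
\end{equation*}
Plugging in \eqref{JNloc} gives
\begin{equation*}
\int_I\bigl(e^{|h-h_I|/(4eM)}-1\bigr)d\Lambda
\le\frac{e\,\Lambda(I)}{4eM}\int_0^\infty\exp\Bigl(-\frac{\lambda}{2eM}+\frac{\lambda}{4eM}\Bigr)d\lambda
=\frac{\Lambda(I)}{4M}\cdot 4eM=e\,\Lambda(I),
\end{equation*}
hence
\begin{equation}
\label{local-exp}
\int_Ie^{|h-h_I|/(4eM)}\,d\Lambda\le(1+e)\,\Lambda(I).
\end{equation}

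Finally, the triangle inequality $|h|\le|h-h_I|+|h_I|$ yields
$e^{|h|/(4eM)}\le e^{|h_I|/(4eM)}e^{|h-h_I|/(4eM)}$, and integrating over $I$ while using \eqref{local-exp} gives \eqref{espsI}.

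The only non-routine point is the localized form \eqref{JNloc} of John--Nirenberg with the constant $M_h(I)$ rather than $\|h\|_{BMO(\T)}$; once this is accepted, the rest is an elementary distribution-function computation. The choice of the exponent $1/(4eM)$, exactly half of the threshold $1/(2eM)$ appearing in \eqref{JNloc}, is precisely what makes the integral converge and produces the clean constant $(1+e)$.
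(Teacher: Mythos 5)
Your proposal is correct and follows essentially the same route as the paper: the localized John--Nirenberg inequality with $M_h(I)$ in place of $\|h\|_{BMO(\T)}$ (justified, as in the paper, by inspecting the Calder\'on--Zygmund argument on $I$), a distribution-function computation of $\int_I e^{|h-h_I|/(4eM_h(I))}\,d\Lambda$ giving the bound $(1+e)\Lambda(I)$, and the triangle inequality $|h|\le|h-h_I|+|h_I|$ to conclude. The only cosmetic difference is that the paper carries out the layer-cake estimate with a general constant $c'\in(0,c)$ before specializing to $c'=1/(4e)$, whereas you plug in $1/(4e)$ directly.
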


\begin{proof} 
Inspecting  the standard proof of the John--Nirenberg inequality that uses recursively the Calder\`on--Zygmund decomposition 
on dyadic subdivisions of $I$ \cite[Chapter VI, Theorem 2.1]{gar}, one checks that 
\eqref{JNT} remains valid  if we replace $\|h\|_{BMO(\T)}$  by $M_h(I)$:
\begin{equation}
\label{JNQMT}
\frac{\Lambda\bigl(\{\xi\in I~:~|h(\xi)-h_I|>\lambda\}\bigr)}{\Lambda(I)}\leq 
C\exp\left(\frac{-c\lambda}{M_h(I)}\right).
\end{equation}
Pick $c'\in(0,c)$ with $c$  as in \eqref{JNQMT}, and set 
$g:=c'|h-h_I|/M_h(I)$. We compute  as in  \cite[Corollary 7.1.7]{Grafakos}:
\begin{multline*}
\frac{1}{\Lambda(I)}\int_Ie^g\,d\Lambda=1+\frac{1}{\Lambda(I)}\int_I(e^g-1)\,d\Lambda\\=1+\frac{1}{\Lambda(I)}\int_0^\infty
e^\lambda\Lambda(\{\xi\in I\,:\,g(\xi)>\lambda\})\,d\lambda
\end{multline*}
where the second equality follows from Fubini's theorem.
Using \eqref{JNQMT} to estimate the distribution function of $g$, 
we find that
\begin{multline*}
\frac{1}{\Lambda(I)}\int_Ie^{c'|h-h_I|/M_h(I)} d\Lambda=\frac{1}{\Lambda(I)}\int_Ie^g\,d\Lambda\\
\le 1+C\int_0^\infty e^\lambda e^{-c\lambda /c'}d\lambda=1+\frac{C}{c/c'-1}.
\end{multline*}
Choosing 
$C=e$, $c=1/(2e)$, and $c'=1/4e$, we obtain
\begin{equation}
\label{estdf1}
\frac{1}{\Lambda(I)}\int_Ie^{|h-h_I|/(4eM_h(I))} d\Lambda\leq
1+e
\end{equation}
from which  \eqref{espsI} follows at once.
\end{proof}

By definition, $M_h(I)$ tends to zero uniformly with
$\Lambda(I)$ if  $h\in VMO(\T)$, and Lemma~\ref{expsumVMOG}
makes it clear that in this case $e^h\in L^p(\T)$ for every $p\in[1,\infty)$. 

When $h\in VMO_\R(\T)$, where subscript
``$\R$'' means ``real-valued'' as usual,
it is well known  that $e^h$
satisfies  
condition $A_p$ given in \eqref{defAp} for all $p\in(1,\infty)$. 
This follows for instance 
from \eqref{estdf1} and \cite[Chapter VI, Corollary 6.5]{gar}.
Below, we record for 
later use a specific  estimate for the $A_p$ norm in terms of \eqref{defMhG}.

\begin{lem}
\label{ApVMO}
Let $h\in VMO_\R(\T)$ and $p\in(1,\infty)$. Let $\eta=\eta(h,p)>0$ be so 
small that $4eM_h(I)\max(1,1/(p-1)\bigr)\leq1$ for every arc $I\subset\T$ satisfying
$\Lambda(I)<\eta$. 
Then
\begin{equation}
\label{estAp}
\{e^h\}_{A_p}:=\sup_{I}\left(\frac{1}{\Lambda(I)}\int_{I} e^h\,d\Lambda\right)
\left(\frac{1}{\Lambda(I)}\int_I e^{-h/(p-1)} d\Lambda\right)^{p-1}
\leq C
\end{equation}
where $C$ depends only on $\eta$, $p$, and $\|e^h\|_{L^1(\T)}$.
\end{lem}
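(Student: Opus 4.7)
The plan is to split the supremum defining $\{e^h\}_{A_p}$ according to the length of the arc $I$: \emph{small arcs} with $\Lambda(I)<\eta$, and \emph{large arcs} with $\Lambda(I)\ge\eta$. For small arcs, the bound will be an immediate consequence of Lemma~\ref{expsumVMOG} with the universal constant $(1+e)^p$. For large arcs, the task reduces to producing a quantitative upper bound on $\|e^{-h/(p-1)}\|_{L^1(\T)}$ in terms of $\eta$, $p$ and $\|e^h\|_{L^1(\T)}$; once this is done, the trivial estimate $\Lambda(I)^{-1}\int_I e^h\,d\Lambda\le \eta^{-1}\|e^h\|_{L^1(\T)}$ will complete the job.

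For the small-arc case, I would apply Lemma~\ref{expsumVMOG} to the centered function $h-h_I$, which has vanishing mean on $I$ and satisfies $M_{h-h_I}(I)=M_h(I)$. The hypothesis $4eM_h(I)\max(1,1/(p-1))\le 1$ ensures that $|h-h_I|$ and $|h-h_I|/(p-1)$ are pointwise dominated by $|h-h_I|/(4eM_h(I))$; hence the lemma gives $\int_I e^{\pm(h-h_I)}\,d\Lambda\le(1+e)\Lambda(I)$ and $\int_I e^{\pm(h-h_I)/(p-1)}\,d\Lambda\le (1+e)\Lambda(I)$. Multiplying the resulting bounds on the averages of $e^h$ and $e^{-h/(p-1)}$, and raising the second to the power $p-1$, produces the bound $(1+e)^p$, which settles this case.

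For the large-arc case, the main work is to bound $\|e^{-h/(p-1)}\|_{L^1(\T)}$. I would cover $\T$ by $N=\lceil 8\pi/\eta\rceil$ overlapping arcs $J_1,\dots,J_N$ of length $\eta/2$, with consecutive arcs $J_i,J_{i+1}$ sharing a subarc $K_i$ of length $\eta/4$. The small-arc analysis yields $\int_{J_i}e^h\,d\Lambda\le(1+e)\Lambda(J_i)\,e^{h_{J_i}}$ and $\int_{J_i}e^{-h/(p-1)}\,d\Lambda\le(1+e)\Lambda(J_i)\,e^{-h_{J_i}/(p-1)}$. Since the $J_i$ cover $\T$, summing the first inequality gives $\max_i e^{h_{J_i}}\ge\|e^h\|_{L^1(\T)}/(4\pi(1+e))$, which is a data-dependent lower bound on $\max_i h_{J_i}$.

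The key chaining step, which I expect to be the main obstacle, is to pass from a lower bound on $\max_i h_{J_i}$ to a lower bound on $\min_i h_{J_i}$. Using the overlap $K_i$ with $\Lambda(J_i)/\Lambda(K_i)=2$, one computes $|h_{J_i}-h_{K_i}|\le 2M_h(J_i)\le 1/(2e)$ and hence $|h_{J_i}-h_{J_{i+1}}|\le 1/e$; chaining around $\T$ then gives $\max_{i,j}|h_{J_i}-h_{J_j}|\le 8\pi/(e\eta)$. Combined with the preceding lower bound on $\max_i h_{J_i}$, this shows that every $h_{J_i}$ is bounded below by $\log\|e^h\|_{L^1(\T)}$ minus a constant depending only on $\eta$ and $p$, so $\max_i e^{-h_{J_i}/(p-1)}\le C_1(\eta,p,\|e^h\|_{L^1(\T)})$. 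Summing the small-arc bound on $\int_{J_i}e^{-h/(p-1)}\,d\Lambda$ over $i$ then delivers the required control on $\|e^{-h/(p-1)}\|_{L^1(\T)}$, and substituting into $\eta^{-p}\|e^h\|_{L^1(\T)}\|e^{-h/(p-1)}\|_{L^1(\T)}^{p-1}$ yields the bound for large arcs, finishing the proof.
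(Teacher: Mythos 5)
Your proof is correct. The small-arc half is essentially the paper's own argument: after centering, the hypothesis $4eM_h(I)\max(1,1/(p-1))\le1$ lets you dominate both $e^{\pm(h-h_I)}$ and $e^{\pm(h-h_I)/(p-1)}$ by $e^{|h-h_I|/(4eM_h(I))}$ and invoke the integral John--Nirenberg bound of Lemma~\ref{expsumVMOG}, giving a universal constant (the paper gets $(1+e)^2$ after first reducing to $p\ge2$ via $\{e^h\}_{A_p}=\{e^{-h/(p-1)}\}_{A_{p'}}^{p-1}$ and H\"older; your $(1+e)^p$ without that reduction is equally fine). Where you genuinely diverge is the large-arc case. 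The paper disposes of it in two lines: for $p\ge2$ both averages are bounded by $\eta^{-1}\|e^{|h|}\|_{L^1(\T)}$, so its constant is $(\|e^{|h|}\|_{L^1(\T)}/\eta)^2$ --- note this involves $\|e^{|h|}\|_{L^1(\T)}$ rather than the quantity $\|e^h\|_{L^1(\T)}$ named in the statement. You instead control $\|e^{-h/(p-1)}\|_{L^1(\T)}$ directly by covering $\T$ with $O(1/\eta)$ overlapping arcs of length $<\eta$, using the centered bound on each arc and a chaining estimate $|h_{J_i}-h_{J_{i+1}}|\le 1/e$ (valid since $M_h(J_i)\le 1/(4e)$) to propagate a lower bound on $\max_i h_{J_i}$, obtained from $\|e^h\|_{L^1(\T)}$, to all the $h_{J_i}$. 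This is longer, but it buys exactly the dependence claimed in the lemma (only $\eta$, $p$, $\|e^h\|_{L^1(\T)}$), it avoids the $p\ge2$ duality step, and in effect it supplies the small bridge (control of $\|e^{-h}\|$-type quantities by $\eta$ and $\|e^h\|_{L^1(\T)}$) that the paper's shortcut leaves implicit. Only cosmetic slips: with $N=\lceil 8\pi/\eta\rceil$ arcs of length $\eta/2$ the total covered length is $\le 4\pi+\eta/2$, so your denominator $4\pi(1+e)$ should be adjusted accordingly, and the degenerate case where $h$ is a.e.\ constant on an arc ($M_h(I)=0$) should be noted as trivial before dividing by $M_h(I)$; neither affects the argument.
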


\begin{proof} If we put $p'=p/(p-1)$, then $1/(p-1)=p'-1$ and it follows
easily from the definition that 
$\{e^h\}_{A_p}= \{e^{-h/(p-1)}\}_{A_{p'}}^{(p-1)}$.
Therefore we may assume that $p\geq2$. 

Now, the left hand side of \eqref{estAp} can be rewritten as
$$
\sup_{I}\Bigl(\frac{1}{\Lambda(I)}\int_{I} e^{h-h_I}\,d\Lambda\Bigr)
\Bigl(\frac{1}{\Lambda(I)}\int_I e^{-(h-h_I)/(p-1)} d\Lambda\Bigr)^{p-1}.
$$
If $\Lambda(I)<\eta$, then $4eM_h(I)$ and $4eM_h(I)/(p-1)<1$, thus
by \eqref{estdf1} and H\"older's inequality we have 
\begin{multline*}
\frac{1}{\Lambda(I)}\int_{I} e^{h-h_I}\,d\Lambda\le
\Bigl(\frac{1}{\Lambda(I)}\int_{I} e^{|h-h_I|/(4eM_h(I))}\,
d\Lambda\Bigr)^{4eM_h(I)}\\ \le(1+e)^{4eM_h(I)}\le(1+e)
\end{multline*}
and
\begin{multline*}
\Bigl(\frac{1}{\Lambda(I)}\int_{I} e^{-(h-h_I)/(p-1)}\,d\Lambda\Bigr)^{p-1}
\le\Bigl(\frac{1}{\Lambda(I)}\int_{I} e^{|h-h_I|/(4eM_h(I))}\,
d\Lambda\Bigr)^{4eM_h(I)}\\ \le(1+e).
\end{multline*}
This shows that \eqref{estAp} holds with $C=(1+e)^2$ when the supremum is 
restricted to those $I$ of length less than $\eta$.
In another connection, if $\Lambda(I)\geq\eta$, then obviously
\[\frac{1}{\Lambda(I)}\int_{I} e^h\,d\Lambda\leq
\eta^{-1}\|e^{|h|}\|_{L^1(\T)}\]
and likewise, taking into account that $p\geq2$ and
using H\"older's inequality, we obtain
\[
\left(\frac{1}{\Lambda(I)}\int_{I} e^{-h/(p-1)}\,d\Lambda\right)^{p-1}
\leq\frac{1}{\Lambda(I)}\int_{I} e^{|h|}\,d\Lambda\leq
\eta^{-1}\|e^{|h|}\|_{L^1(\T)}.
\]
Thus, \eqref{estAp} holds with $C=\bigl(\|e^{|h|}\|_{L^1(\T)}/\eta\bigr)^2$ 
in this case.
\end{proof}

When $\Gamma$ is a Jordan curve locally isometric to a Lipschitz graph, the definitions of  $BMO(\Gamma)$,
$VMO(\Gamma)$, and condition $A_p$ on $\Gamma$ which are modeled after 
\eqref{defBMOT}, \eqref{defVMOT}, and \eqref{defAp} do coincide with 
the standard ones \cite[Section 2.5]{BoKa}\footnote{In the standard definition, 
arcs $I\subset\Gamma$
are replaced by sets of  type $\D(\xi,\rho)\cap\Gamma$ with $\xi\in\Gamma$.
It is in this form that condition $A_p$ is necessary and sufficient for
weighted $L^p$ boundedness of the singular Cauchy integral operator
on $\Gamma$, see 
\cite[Chapter 5]{BoKa}.}. 
Lemma~\ref{expsumVMOG} and Lemma~\ref{ApVMO}  
carry over mechanically to this 
more general setting, but the significance of condition $A_p$ 
with respect to the weighted $L^p$ continuity of the conjugate operator 
is no longer the same if  $\Gamma$ is non-smooth\footnote{
Even if we restrict ourselves to
\emph{constant} weights (which certainly satisfy $A_p$ for all 
$p\in(1,\infty)$), the conjugate operator  is generally $L^p$-continuous 
for  restricted range of $p$ only. This follows from
\cite[Theorem 2.1]{LS} and the fact that the Szeg\H{o} projection has the same 
weighted $L^p$ type as the conjugate operator on $\T$.}. 
Such considerations are not needed in this paper, but we
make use at some point of the following estimate showing that
$W^{1/2,2}(\Gamma)$ embeds contractively 
in $VMO(\Gamma)$ \cite{BrN1}:
\begin{equation}
\label{VMOW1/2}
\begin{array}{lll}
\frac{1}{\Lambda(I)}\int_{I}|h-h_{I}|\,d\Lambda&\leq& \frac{1}{(\Lambda(I))^2}
\int_{I\times I}|h(t)-h(t')|\,d\Lambda(t)d\Lambda(t')\\
&\leq& \frac{1}{\Lambda(I)}
\int_{I\times I}\frac{|h(t)-h(t')|}{\Lambda(t,t')}
d\Lambda(t)d\Lambda(t')\\
&\leq&
\left(
\int_{I\times I}\frac{|h(t)-h(t')|^2}{(\Lambda(t,t'))^2}d\Lambda(t)d\Lambda(t')\right)^{1/2}\\
&\leq& 
 \|h\|_{W^{1/2,2}(\Gamma)},
\end{array}
\end{equation}
where the next to last step uses the Schwarz inequality.
Note that if $h\in W^{1/2,2}(\Gamma)$, then 
$$
\|h\|_{W^{1/2,2}(I)}:=
\left(\int_{I\times I}\frac{|h(t)-h(t')|^2}{(\Lambda(t,t'))^2}d\Lambda(t)d\Lambda(t')\right)^{1/2}
$$ 
tends to $0$ as $\Lambda(I)\to 0$ 
by the absolute continuity of $\frac{|h(t)-h(t')|^2}{(\Lambda(t,t'))^2}d\Lambda(t)d\Lambda(t')$.

\subsection{Exp-summability of Sobolev functions at the critical exponent}
Given   a bounded open set $\Omega\subset\C$,
the Trudinger-Moser inequality \cite{Moser} asserts that
\begin{equation}
\label{inegMT}
\sup_{\stackrel{h\in W^{1,2}_0(\Omega)}{\|\partial h\|^2_{L^2(\Omega)}+
\|\bar \partial h\|^2_{L^2(\Omega)}\leq1/2}}\,\int_\Omega  e^{4\pi |h|^2}dm\leq C_{\text{TM}}
|\Omega|
\end{equation}
for some absolute constant $C_{\text{TM}}$. Now, given a nonzero 
$f\in W^{1,2}_0(\Omega)$, put for the sake of simplicity 
$N_1(f):=(2\|\partial f\|^2_{L^2(\Omega)}+
2\|\bar \partial f\|^2_{L^2(\Omega)})^{1/2}$ and let further $f_1=f/N_1(f)$. 
For each
$\xi\in\Omega$ such that $f(\xi)$ is defined, we have either
$|f(\xi)|\leq N_1^2(f)/4\pi$ 
or $\exp(|f(\xi)|)< \exp(4\pi|f_1(\xi)|^2)$.
Thus, applying \eqref{inegMT} with  $h=f_1$, we obtain for $f\in W^{1,2}_0(\Omega)$ {\it a fortiori} that
\begin{equation}
\label{inegMTs}
\int_\Omega e^{|f|}dm\leq|\Omega|\Bigl(C_{\text{TM}}+
\exp\Bigl({\frac{\|\partial f\|^2_{L^2(\Omega)}+
\|\bar \partial f\|^2_{L^2(\Omega)}}{2\pi}}\Bigr)\Bigr).
\end{equation}

\begin{lem}
\label{LpexpW12}
Let $\Omega\subset\C$ be a bounded and Lipschitz  open set.
Then there exist $C_1=C_1(\Omega)$, $C_2=C_2(\Omega)$ such that, for every $\ell\in[1,\infty)$
and $f\in W^{1,2}(\Omega)$,
\begin{equation}
\label{estimee normexp}
\|e^{|f|}\|_{L^\ell(\Omega)}\leq C_1
\exp(C_2\ell \|f\|^2_{W^{1,2}(\Omega)}).
\end{equation}
\end{lem}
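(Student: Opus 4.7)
My plan is to reduce the statement to the already-established estimate \eqref{inegMTs} for compactly supported $W^{1,2}_0$ functions, then exploit the quadratic-versus-linear scaling in $\ell$ cleverly when taking the $L^\ell$-norm.

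First, I would use the extension theorem stated in \eqref{x22} to produce $\tilde f\in W^{1,2}(\R^2)$ with $\tilde f_{|\Omega}=f$ and $\|\tilde f\|_{W^{1,2}(\R^2)}\le C_0\|f\|_{W^{1,2}(\Omega)}$, where $C_0=C_0(\Omega)$. Then I would fix, once and for all, a bounded Lipschitz domain $\Omega'\supset\overline\Omega$ together with a cutoff $\chi\in\mathcal{D}(\Omega')$ satisfying $\chi\equiv1$ on a neighbourhood of $\overline\Omega$ (both depending only on $\Omega$), and set $g:=\chi\tilde f$. Since $\chi$ is smooth and compactly supported in $\Omega'$, the function $g$ belongs to $W^{1,2}_0(\Omega')$ and satisfies $g=f$ on $\Omega$, with
\[
\|\partial g\|_{L^2(\Omega')}^{2}+\|\bar\partial g\|_{L^2(\Omega')}^{2}\le C_3\,\|f\|_{W^{1,2}(\Omega)}^{2}
\]
for some $C_3=C_3(\Omega)$, by the Leibniz rule and the boundedness of $\chi$ and its derivatives.

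Next, rather than applying \eqref{inegMTs} to $g$ directly, I would apply it to $\ell g$, which still lies in $W^{1,2}_0(\Omega')$. Since
\[
\int_{\Omega}e^{\ell|f|}\,dm\le\int_{\Omega'}e^{|\ell g|}\,dm,
\]
the inequality \eqref{inegMTs} applied to $\ell g$ yields
\[
\int_{\Omega}e^{\ell|f|}\,dm\le|\Omega'|\Bigl(C_{\mathrm{TM}}+\exp\Bigl(\tfrac{\ell^{2}C_3}{2\pi}\,\|f\|_{W^{1,2}(\Omega)}^{2}\Bigr)\Bigr).
\]
Taking the $\ell$-th root and using the elementary inequality $(a+b)^{1/\ell}\le a^{1/\ell}+b^{1/\ell}$ for $\ell\ge1$, together with $|\Omega'|^{1/\ell}\le\max(1,|\Omega'|)$ and $C_{\mathrm{TM}}^{1/\ell}\le\max(1,C_{\mathrm{TM}})$, gives
\[
\|e^{|f|}\|_{L^\ell(\Omega)}\le C_1\Bigl(1+\exp\Bigl(\tfrac{C_3}{2\pi}\,\ell\,\|f\|_{W^{1,2}(\Omega)}^{2}\Bigr)\Bigr)\le 2C_1\exp\bigl(C_2\ell\,\|f\|_{W^{1,2}(\Omega)}^{2}\bigr)
\]
with $C_2=C_3/(2\pi)$, which is the desired estimate \eqref{estimee normexp} after renaming $2C_1\mapsto C_1$.

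I do not anticipate any serious obstacle: the only subtle point is the choice to feed $\ell g$ rather than $g$ into the Trudinger--Moser-type bound \eqref{inegMTs}, which converts the undesirable $\ell^{2}$ in the exponent into the correct linear factor $\ell$ after the $\ell$-th root is extracted. Everything else is an exercise in extension and cutoff, and the constants $C_1,C_2$ depend only on $\Omega$ (through $C_0$, $\chi$, $\Omega'$ and $C_{\mathrm{TM}}$) as required.
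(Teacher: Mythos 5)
Your argument is correct and is essentially the paper's own proof: both extend $f$ to $\R^2$, multiply by a smooth cutoff to land in a $W^{1,2}_0$ space on a slightly larger bounded set, apply the Trudinger--Moser consequence \eqref{inegMTs} to $\ell$ times the resulting function (the paper writes $h=\ell\varphi\tilde f$, you apply \eqref{inegMTs} to $\ell g$ -- the same thing), and then extract the $\ell$-th root to turn the $\ell^2$ in the exponent into the linear factor $\ell$. The constants depend only on $\Omega$ exactly as in the paper, so there is nothing to add.
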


\begin{proof}
Let $\Omega_1\supset\overline{\Omega}$ be open and, say 
$|\Omega_1|\leq 2|\Omega|$.
Pick $\varphi\in\mathcal{D}(\R^2)$ to have 
support in $\Omega_1$,  values in $[0,1]$, and 
to be identically 1 on   $\Omega$. 
By the extension theorem, there exists 
$\tilde{f}\in W^{1,2}(\R^2)$ such that $\tilde{f}_{|\Omega}=f$ and
$\|\tilde{f}\|_{W^{1,2}(\R^2)}\leq C\|f\|_{W^{1,2}(\Omega)}$,
where $C=C(\Omega)$. Then $h:=\ell\varphi\tilde{f}$ lies in 
$W^{1,2}_0(\Omega_1)$  and satisfies
$$
\|\partial h\|^2_{L^2(\Omega_1)}+\|\bar \partial h\|^2_{L^2(\Omega_1)}\le \ell^2 C'\|f\|^2_{W^{1,2}(\Omega)},
$$
where $C'$ depends on $C$ and $\varphi$. 
Applying \eqref{inegMTs} to $h$,
we find on putting $C_2=C'/(2\pi)$ that
$$
\int_{\Omega}e^{\ell|f|}dm\leq \int_{\Omega_1}e^{|h|}dm\leq
\left(e^{\ell^2 C_2\|f\|^2_{W^{1,2}(\Omega)}}+C_{\text{TM}}\right)|\Omega_1|,
$$
that yields \eqref{estimee normexp} upon
setting $C_1:=2(1+C_{\text{TM}})|\Omega|$.
\end{proof}

With the help of  Lemma~\ref{LpexpW12}, we now prove that
$e^f$ is fairly
 smooth 
when $f\in W^{1,2}(\Omega)$. Recall that a (possibly nonlinear) operator 
between Banach spaces is said  to be bounded if it maps bounded sets into 
bounded sets. 

\begin{prop}
\label{embexpS}
Let $\Omega\subset\R^2$ be a bounded Lipschitz smooth open set. Fix
$p\in(1,\infty)$ and   $\ell\in[1,\min(p,2))$.
Then, the map $(g,f)\mapsto ge^f$ is continuous and bounded from
$W^{1,p}(\Omega)\times W^{1,2}(\Omega)$ into
$W^{1,\ell}(\Omega)$,  and derivatives are  computed using the Leibniz and
the chain rules: 
\begin{equation}
\label{CR2}
\partial(ge^f)=e^f\partial g +ge^f\partial f,\qquad
\bar\partial(ge^f)=e^f\bar\partial g +ge^f\bar\partial f.
\end{equation}
In particular, for every $q\in[1,2)$, the map $f\mapsto e^f$
is continuous and bounded from $W^{1,2}(\Omega)$ into $W^{1,q}(\Omega)$ and
so is the map $f\mapsto e^{\tr_{\partial\Omega}f}$  
from $W^{1,2}(\Omega)$ into $W^{1-1/q,q}(\partial\Omega)$.
\end{prop}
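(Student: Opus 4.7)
The plan is to prove Proposition~\ref{embexpS} by smooth approximation, using Lemma~\ref{LpexpW12} to obtain $L^s$-bounds on exponentials for every finite $s$ and then balancing H\"older exponents against Sobolev embeddings to control the products. First I would pick sequences $f_n,g_n\in C^\infty(\overline{\Omega})$ converging to $f$, $g$ in $W^{1,2}(\Omega)$ and $W^{1,p}(\Omega)$ respectively; these exist by the extension theorem and the density of smooth functions. Because $f_n$ is bounded, $e^{f_n}$ is smooth on $\overline{\Omega}$ and the classical Leibniz and chain rules give
\[
\partial(g_n e^{f_n})=e^{f_n}\partial g_n+g_n e^{f_n}\partial f_n,\qquad \bar\partial(g_n e^{f_n})=e^{f_n}\bar\partial g_n+g_n e^{f_n}\bar\partial f_n,
\]
so everything will reduce to passing these identities to the limit in $L^\ell(\Omega)$.

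Next I would establish uniform bounds. Writing $|e^{f_n}|=e^{\mathrm{Re}\,f_n}\le e^{|f_n|}$ and applying Lemma~\ref{LpexpW12} to $|f_n|\in W^{1,2}_\R(\Omega)$ yields $\|e^{f_n}\|_{L^s(\Omega)}\le C_1\exp(C_2'\,s\,\|f_n\|_{W^{1,2}}^2)$ for every $s\in[1,\infty)$, uniformly in $n$. For the first term, H\"older with $1/\ell=1/p+1/s$ (allowed since $\ell<p$) controls $\|e^{f_n}\partial g_n\|_{L^\ell}$ by $\|e^{f_n}\|_{L^s}\|\partial g_n\|_{L^p}$. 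For the nonlinear term $g_ne^{f_n}\partial f_n$, I split $1/\ell=1/a+1/2+1/b$: since $\ell<2$ we have $1/\ell-1/2>0$, and by Sobolev embedding $g_n\in L^{p^*}(\Omega)$ with $p^*=2p/(2-p)$ when $p<2$ and $p^*$ arbitrarily large otherwise; the condition $\ell<\min(p,2)$ is precisely what guarantees the existence of admissible $a\le p^*$ and finite $b$. Thus all terms are bounded in $L^\ell(\Omega)$ uniformly in $n$, with the bound depending continuously on $\|f\|_{W^{1,2}}$ and $\|g\|_{W^{1,p}}$.

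Then, after extracting a subsequence, $f_n\to f$ a.e.\ (by Rellich--Kondrachov) so $e^{f_n}\to e^f$ a.e.; combined with the uniform $L^{s+1}$-bound, Vitali's theorem upgrades this to $L^s$-convergence for every finite $s$. Together with $g_n\to g$ in $L^a$ and $\partial f_n\to\partial f$ in $L^2$, passing to the limit (factor-by-factor through H\"older) in the Leibniz identity yields \eqref{CR2} and shows $ge^f\in W^{1,\ell}(\Omega)$ with the quantitative bound. Uniqueness of the weak limit forces the full sequence to converge, yielding continuity of $(g,f)\mapsto ge^f$. Specializing to $g\equiv1$ proves boundedness and continuity of $f\mapsto e^f$ from $W^{1,2}(\Omega)$ into $W^{1,q}(\Omega)$ for any $q\in[1,2)$; composing with the continuous trace map $W^{1,q}(\Omega)\to W^{1-1/q,q}(\partial\Omega)$ and noting that $\tr e^{f_n}=e^{\tr f_n}$ for smooth $f_n$ (with both sides converging in their respective topologies) identifies $\tr e^f$ with $e^{\tr f}$.

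The main obstacle is the middle step: at the critical exponent the exponential bound blows up quadratically in $\|f\|_{W^{1,2}}$, so one must exploit the fact that any finite $L^s$-integrability of $e^f$ is available and combine it with Sobolev embedding in exactly the right way. The restriction $\ell<\min(p,2)$ is sharp in this H\"older computation---one loses a bit of integrability in each factor---which is precisely why the result gives $W^{1,q}$ only for $q<2$ and not for $q=2$, and why the elementary $W^{1,r}$-in, $W^{1,r}$-out statement available for $r>2$ fails at $r=2$.
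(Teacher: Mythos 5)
Your proposal is correct and follows essentially the same route as the paper's proof: smooth approximation, the Trudinger--Moser-based Lemma~\ref{LpexpW12} for uniform $L^s$-bounds on $e^{f_n}$, H\"older/Sobolev exponent bookkeeping to pass to the limit in the Leibniz identity, and then $g\equiv1$ plus the trace theorem for the boundary statement. The only (harmless) deviation is that you obtain $e^{f_n}\to e^f$ in every $L^s$ via a.e.\ convergence and Vitali, whereas the paper argues through the mean-value theorem and the Schwarz inequality, treating real and imaginary parts separately.
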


\begin{proof} 
Let  $g\in W^{1,p}(\Omega)$,  $f\in W^{1,2}(\Omega)$, and 
let $(f_n)$, $(g_n)$ be two sequences of smooth functions on $\Omega$ 
converging respectively to $f$ and $g$ in $W^{1,2}(\Omega)$ and 
$W^{1,p}(\Omega)$.
\emph{We claim} that $e^{f_n}$ converges to $e^f$ in $L^\ell(\Omega)$ for all 
$\ell\in[1,\infty)$. To see this, consider first the case of real-valued 
functions. By the mean-value theorem and convexity 
of $t\mapsto e^t$, we have that
\begin{multline}
\label{manqt}
\int_\Omega \bigl|e^f-e^{f_n}\bigr|^\ell\,dm\leq
\int_\Omega |f-f_n|^\ell\bigl|e^f+e^{f_n}\bigr|^\ell\,dm\\ \le
\|f-f_n\|_{L^{2\ell}(\Omega)}^{\ell}\|e^f+e^{f_n}\|_{L^{2\ell}(\Omega)}^\ell,
\end{multline}
where we use the Schwarz inequality. By the Sobolev embedding theorem, 
$\|f-f_n\|_{L^{2\ell}(\Omega)}$ tends to $0$ as $n\to\infty$. 
Moreover, $\|f_n\|_{W^{1,2}(\Omega)}$ tends to $\|f\|_{W^{1,2}(\Omega)}$, 
hence $\|e^f+e^{f_n}\|_{L^{2\ell}(\Omega)}$ is uniformly bounded
by Lemma~\ref{LpexpW12}, and the right hand side of \eqref{manqt}
indeed goes to zero as $n\to\infty$.
Next, if  $f$, $f_n$ are complex-valued, say $f=u+iv$ and $f_n=u_n+iv_n$,
we write 
$$
\|e^f-e^{f_n}\|_{L^\ell(\Omega)}\leq\|e^u(e^{iv}-e^{iv_n})\|_{L^\ell(\Omega)}
+\|e^{iv_n}(e^u-e^{u_n})\|_{L^\ell(\Omega)}.
$$
By what precedes, the last term in the right hand side tends to $0$
 when $n\to\infty$, and so does the first since we can extract  
pointwise  convergent subsequences from 
any subsequence of $v_n$ and apply the dominated convergence theorem.
This proves the claim.

Next, we observe that $g_n e^{f_n}$ is smooth on $\Omega$ and that
\begin{equation}
\label{CRS2}
\partial(g_ne^{f_n})=e^{f_n}\partial g_n +g_ne^{f_n}\partial f_n.
\end{equation}
Assume first that $p<2$.
Then, by  the Sobolev embedding 
theorem, $(g_n)$ converges to $g$ in $L^{p^*}(\Omega)$ where $p^*=2p/(2-p)>2$.
From this and the previous claim, we deduce by H\"older's inequality
that $(g_ne^{f_n})$ converges to 
$ge^f$ in $L^\ell(\Omega)$ for $\ell\in [1,p^*)$, hence also in the sense of  
distributions.
By the same token, the right hand side of \eqref{CRS2} converges to 
$e^f\partial g +ge^f\partial f$ in $L^\ell(\Omega)$ for each 
$\ell\in[1,p)$. The case $p=2$ is similar except that $p^*$ can be 
taken arbitrarily large, hence the convergence in the right 
hand-side of \eqref{CRS2} takes place in $L^\ell(\Omega)$ for  all $\ell<2$.
If $p>2$,  then $g$ is even bounded, but this does not improve the estimate.
Repeating the argument for  $\bar\partial(ge^f)$ proves that 
$(g_n e^{f_n})$ converges to $g e^f$ in $W^{1,\ell}$ for $\ell\in[1,\min(p,2))$
and that \eqref{CR2} holds. Hence, the map $(g,f)\mapsto g e^f$ is defined from 
$W^{1,p}(\Omega)\times W^{1,2}(\Omega)$ into
$W^{1,\ell}(\Omega)$ and \eqref{CR2} is valid. Moreover, by
Lemma~\ref{LpexpW12} and H\"older's inequality, this map is bounded.
Relaxing the smoothness assumption on $f_n$, $g_n$ 
and arguing as before shows that it is also continuous.
This proves the first assertion on the Proposition.
Setting $g\equiv1$, the  second assertion follows by 
the Sobolev embedding and the trace theorems.
\end{proof}

\subsection{Equicontinuity properties of Cauchy transforms}

\begin{prop}
\label{evalderCauchy}
Let $\beta\in L^2(\D)$
and let $\omega$  be a strictly
positive function on $(0,+\infty)$ such 
that
$\|\beta\|_{L^2(Q_{\omega(\varepsilon)}\cap\D)}<\varepsilon$ as soon as 
$Q_{\omega(\varepsilon)}$ is a square  of sidelength $\omega(\varepsilon)$. 
\begin{itemize}
\item[(i)] If we set {\rm(}{\it cf.} \eqref{defCauchy}{\rm)}
$$
\mathcal{C}(\beta)(z)=
\frac{1}{2\pi i}\int_{\D}\frac{\beta(\xi)}{\xi-z}\,d\xi\wedge\overline{d\xi},
\qquad z\in\C,
$$
then there exists
a strictly positive function  $\omega_1$ on  $(0,+\infty)$,
depending only on $\omega$, such that
\begin{equation}
\label{borderC}
\|\partial \mathcal{C}(\beta)\|_{L^2(Q_{\omega_1(\eta)})}+
\|\bar \partial \mathcal{C}(\beta) \|_{L^2(Q_{\omega_1(\eta)})}
<\eta
\end{equation}
as soon as 
$Q_{\omega_1(\eta)}$ is a square of sidelength $\omega_1(\eta)$. 
\item[(ii)] If we set  {\rm(}{\it cf.} \eqref{normreelR}{\rm)}
$$
\mathcal{R}(\beta)(z):=\frac{1}{2\pi i}\int_{\D}\frac{z\bar \beta(\xi)}{1-\bar \xi z }
\,d\xi\wedge\overline{d\xi},
\qquad z\in\D,
$$
then $\mathcal{R}(\beta)\in W^{1,2}(\D)$ is holomorphic in $\D$ and there exists
a strictly positive function  $\omega_2$ on $(0,+\infty)$,
depending only on $\omega$, such that
$$
\|\partial \mathcal{R}(\beta)\|_{L^2(Q_{\omega_2(\eta)}\cap\D)}<\eta
$$
as soon as 
$Q_{\omega_2(\eta)}$ is a square of sidelength $\omega_2(\eta)$. 
\end{itemize}
\end{prop}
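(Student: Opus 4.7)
The plan is to exploit the identities $\bar\partial\mathcal{C}(\beta)=\beta$ (extended by $0$ off $\D$) and $\partial\mathcal{C}(\beta)=\mathcal{B}(\beta)$, where $\mathcal{B}$ is the Beurling transform, bounded on every $L^p(\C)$, $1<p<\infty$. For part (i), the bound on $\|\bar\partial\mathcal{C}(\beta)\|_{L^2(Q)}=\|\beta\|_{L^2(Q\cap\D)}$ is immediate from the hypothesis on $\omega$; the substance is to control $\|\mathcal{B}(\beta)\|_{L^2(Q)}$.

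I would first record two quantitative consequences of the hypothesis, each depending only on $\omega$. Covering $\D$ by finitely many squares of sidelength $\omega(1)$ yields $\|\beta\|_{L^2(\D)}\leq K(\omega)$. More importantly, for any measurable $E\subset\C$, covering $E$ by squares of sidelength $\omega(\varepsilon)$ gives $\|\beta\|_{L^2(E)}\leq C\varepsilon$ whenever $|E|\leq\omega(\varepsilon)^2$, i.e.\ a quantitative $L^2$-equi-integrability statement governed by $\omega$.

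The key step is a truncation. Split $\beta=\beta_M+\beta^M$ with $\beta_M:=\beta\chi_{|\beta|\leq M}$. By Chebyshev, $|\{|\beta|>M\}|\leq K(\omega)^2/M^2$, so choosing $M=M(\omega,\eta)$ large enough, the equi-integrability above yields $\|\beta^M\|_{L^2}\leq\eta/(2\|\mathcal{B}\|_{L^2\to L^2})$, whence $\|\mathcal{B}(\beta^M)\|_{L^2(Q)}\leq\eta/2$ for every square $Q$. For the bounded part, $L^4$-boundedness of $\mathcal{B}$ gives $\|\mathcal{B}(\beta_M)\|_{L^4(\C)}\leq C_4M|\D|^{1/4}$, and Hölder on $Q$ of sidelength $s$ yields
\[
\|\mathcal{B}(\beta_M)\|_{L^2(Q)}\leq s^{1/2}\|\mathcal{B}(\beta_M)\|_{L^4(\C)}\leq C_4\,M(\omega,\eta)\,|\D|^{1/4}\,s^{1/2},
\]
which is $\leq\eta/2$ as soon as $s\leq\omega_1(\eta)$ for an $\omega_1$ that depends only on $\omega$. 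Summing the two contributions gives $\|\mathcal{B}(\beta)\|_{L^2(Q)}\leq\eta$.

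For part (ii), differentiating $\mathcal{R}(\beta)(z)=-\overline{\mathcal{C}(\beta)(1/\bar z)}$ by the chain rule gives, for $z\in\D$,
\[
\partial\mathcal{R}(\beta)(z)=\overline{\mathcal{B}(\beta)(1/\bar z)}/z^2.
\]
The anti-conformal change of variables $w=1/\bar z$ (with $dm(z)=dm(w)/|w|^4$) yields
\[
\|\partial\mathcal{R}(\beta)\|_{L^2(Q\cap\D)}=\|\mathcal{B}(\beta)\|_{L^2(Q'\cap(\C\setminus\D))},
\]
where $Q'$ denotes the image of $Q\cap\D$ under the inversion. For small squares $Q$ with $\mathrm{dist}(Q,0)\geq r_0>0$, $Q'$ sits inside a Euclidean square of sidelength $\lesssim s/r_0^2$, so part (i) applies directly and delivers the desired estimate. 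For $Q$ containing or very close to $0$, once $s$ is small we have $Q\cap\D$ contained in $\D_{1/2}$, the kernel $(1-\bar\xi z)^{-2}$ is uniformly bounded in $z\in Q$, $\xi\in\D$, and the Schwarz inequality gives $\|\partial\mathcal{R}(\beta)\|_{L^2(Q\cap\D)}\leq Cs\|\beta\|_{L^2(\D)}\leq Cs\,K(\omega)$, which is arbitrarily small in $s$. Combining the two cases produces $\omega_2$ depending only on $\omega$.

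The main obstacle is to make sure that $\omega_1$ and $\omega_2$ really depend only on $\omega$ and not on finer features of $\beta$; this is exactly what the quantitative equi-integrability encoded in $\omega$ provides, and it forces us to go beyond the $L^2$-continuity of $\mathcal{B}$ and use $L^p$-continuity for some $p>2$ in order to convert the truncation height $M$ (which depends only on $\omega,\eta$) into smallness of the $L^2$-norm over small squares.
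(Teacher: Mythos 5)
The step that fails is your ``quantitative equi-integrability'' claim and the truncation built on it. First, its justification is wrong: a measurable set $E$ with $|E|\le\omega(\varepsilon)^2$ can be scattered over all of $\D$, and covering it by squares of sidelength $\omega(\varepsilon)$ may require on the order of $\omega(\varepsilon)^{-2}$ such squares, so the hypothesis yields no bound this way. More seriously, the claim is false with constants depending only on $\omega$, and so is what you derive from it (a truncation height $M(\omega,\eta)$ making $\|\beta^M\|_{L^2}$ small): the square condition only limits how much of the measure $|\beta|^2\,dm$ a single square of a given sidelength can carry; it does not prevent essentially all of the mass from sitting on very large values of $|\beta|$ spread thinly over many squares. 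For instance, partition the unit square into $N$ cells of sidelength about $N^{-1/2}$ and let $|\beta_N|^2=(2N\pi\rho^2)^{-1}$ on a disk of radius $\rho\ll N^{-1/2}$ centered in each cell, zero elsewhere; a square of sidelength $s$ then carries mass $\lesssim\min\bigl(s^2,\,N^{-1},\,s^2N^{-1}\rho^{-2}\bigr)$, and one checks that a single modulus $\omega$ works simultaneously for a sequence with $N\to\infty$ and $\rho=\rho(N)\to0$ arbitrarily fast, because for each $\varepsilon$ only the finitely many $N\lesssim\varepsilon^{-2}$ impose a $\rho$-dependent constraint. Each $\beta_N$ carries total mass $1/2$ on a set of measure $N\pi\rho(N)^2\to0$ on which $|\beta_N|\to\infty$, so for any fixed $M$ the tail $\beta_N^M$ eventually contains all of this mass; hence no choice $M(\omega,\eta)$ can work uniformly over the class of $\beta$ admitting the modulus $\omega$. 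Since uniformity in $\beta$ for fixed $\omega$ is exactly the point of the proposition (it is applied to $\beta=\alpha\bar w/w$ with $w$ varying and $\omega$ attached to $|\alpha|$), this is a genuine gap in part (i), not a repairable detail.

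The remedy is to decompose $\beta$ spatially, relative to the square on which you estimate, instead of truncating its values. With $\delta\le\min\bigl(1/3,\,\omega(\eta/3),\,c\eta/K(\omega)\bigr)$ (your observation $\|\beta\|_{L^2(\D)}\le K(\omega)$ is correct and is what makes all constants depend only on $\omega$), write $\tilde\beta=\chi_{Q_\delta}\tilde\beta+\chi_{\C\setminus Q_\delta}\tilde\beta$: the near part obeys $\|\mathcal{B}(\chi_{Q_\delta}\tilde\beta)\|_{L^2(\C)}=\|\beta\|_{L^2(Q_\delta\cap\D)}<\eta/3$ by the $L^2$-isometry of $\mathcal{B}$ and the hypothesis, while for the far part the kernel is nonsingular on the concentric square $Q_{\delta^2}$, so the Schwarz inequality gives the pointwise bound $|\mathcal{B}(\chi_{\C\setminus Q_\delta}\tilde\beta)|\lesssim\delta^{-1}K(\omega)$ there, and the small area of $Q_{\delta^2}$ converts this into an $L^2(Q_{\delta^2})$ bound of order $\delta K(\omega)$; taking $\omega_1(\eta)=\delta^2$ proves (i) (this is the paper's argument). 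Your treatment of (ii) --- the reflection identity $\mathcal{R}(\beta)(z)=-\overline{\mathcal{C}(\beta)(1/\bar z)}$, the change of variables for squares at distance at least, say, $1/4$ from the origin, and the bounded-kernel estimate for squares inside $\D_{3/4}$ --- is sound and essentially coincides with the paper's, but it yields the proposition only after (i) is repaired.
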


\begin{proof} Since $\beta\in L^2(\D)$, we know that 
$\mathcal{C}(\beta)\in W^{1,2}_{loc}(\C)$.
Fix $\eta>0$ and set $\delta=\min(1/3,\omega(\eta/3),\eta/(6\|\beta\|))$.
For any square $Q_{\delta}$, we have a nested 
concentric square with parallel sides
$Q_{\delta^2}\subset Q_{\delta}$.  
Let  $\widetilde{\beta}$ be the extension of $\beta$ by 0 off $\D$.
Since $\bar\partial\mathcal{C}(\beta)=\widetilde{\beta}$, we obtain 
\begin{equation}
\label{bsdbar}
\|\bar \partial \mathcal{C}(\beta) \|_{L^2(Q_{\delta^2})}
<\eta/3.
\end{equation}
Next, we write
\begin{equation}
\label{formd}
\partial\mathcal{C}(\beta)=\mathcal{B}(\widetilde{\beta})=
\mathcal{B}(\chi_{Q_{\delta}}\widetilde{\beta})+
\mathcal{B}(\chi_{\CC\setminus Q_{\delta}}\widetilde{\beta})
\end{equation}
where $\mathcal{B}$ indicates the Beurling transform, {\it cf.} \eqref{defB}.
As  $\mathcal{B}$ is an isometry on $L^2(\CC)$, we get
\begin{equation}
\label{bsd}
\|\mathcal{B}(\chi_{Q_{\delta}}\widetilde{\beta})\|_{L^2(\CC)}=
\|\beta\|_{L^2(Q_{\delta}\cap\DD)}
<\eta/3.
\end{equation}
Moreover, formula \eqref{defB} and the Cauchy-Schwarz inequality 
give us the pointwise estimate:
$$
\mathcal{B}(\chi_{\CC\setminus Q_{\delta}}\widetilde{\beta})(z)
\le \frac2\delta \|\beta\|_{L^2(\DD)},
\qquad z\in Q_{\delta^2}.
$$
Integrating over $Q_{\delta^2}$ yields
\begin{equation}
\label{bsd1}
\|\mathcal{B}(\chi_{\CC\setminus Q_{\delta}}\widetilde{\beta})
\|_{L^2(Q_{\delta^2})}\le 
2\delta\|\beta\|_{L^2(\DD)} 
<\eta/3.
\end{equation}
Inequality \eqref{borderC} with $\omega_1(\eta)=\delta$ follows now from \eqref{bsdbar}, \eqref{formd}, \eqref{bsd}
and \eqref{bsd1}, thereby proving $(i)$.

Consider next $\mathcal{R}(\beta)$. Clearly it is holomorphic in $\D$ and
vanishes at $0$. Furthermore, 
$\overline{\mathcal{R}(\beta)(z)}=-\mathcal{C}(\beta)(1/\bar z)$ and since
$\mathcal{C}(\beta)\in W^{1,2}_{loc}(\CC)$ 
we get that $\mathcal{R}(\beta)\in W^{1,2}(\DD)$.

Once again, fix $\eta>0$ and set $\delta=\min(\omega_1(\eta)/4,\eta/(16\|\beta\|))$. 
First, every square
$Q_{\delta}$ has diameter at most $1/4$, hence is disjoint from
$\DD_{1/2}$ if it meets $\mathcal{A}_{3/4}:=\{z:\ 1\geq|z|\geq3/4\}$. 
In this case the
reflection ($z\mapsto 1/\bar z$) of $Q_{\delta}\cap\DD$ is contained in a square of 
sidelength $4\delta\le\omega_1(\eta)$, and since
$$
\partial \mathcal{R}(\beta)(z)=
\frac{\overline{ (\partial(\mathcal{C}\beta))(1/\bar z)}}{ z^2},\qquad z\neq0,
$$
we deduce from \eqref{borderC} and the change of variable formula that 
$\|\partial\mathcal{R}(\beta)\|_{L^2(Q_{\delta})}\allowbreak\le
\eta $.

Assume now that $Q_{\delta}\subset \DD_{3/4}$.
Differentiating under the integral sign we obtain
$$
\partial \mathcal{R}(\beta)=
\frac{1}{2\pi i}\int_{\D}\frac{\bar \beta(\xi)}{1-\bar \xi z}\,d\xi\wedge\overline{d\xi}+
\frac{1}{2\pi i}\int_{\D}\frac{z\bar\xi\bar \beta(\xi)}{(1-\bar \xi z)^2 }\,d\xi\wedge\overline{d\xi},
$$
so that if $z\in\DD_{3/4}$, we get by the Schwarz inequality that
$|\partial \mathcal{R}(\beta)(z)|\leq16\|\beta\|_{L^2(\DD)}$.
Integrating over $Q_{\delta}$ yields
$$
\|\partial\mathcal{R}(\beta)\|_{L^2(Q_{\delta})}\le
16\delta\|\beta\|_{L^2(\DD)}\le\eta,
$$
as desired. It remains to set $\omega_2(\eta)=\delta$.
\end{proof}

\begin{cor}
\label{equicontC2}
Let $\beta\in L^2(\C)$ and let $\omega$ be a strictly positive function on $(0,+\infty)$ such that
$\|\beta\|_{L^2(Q_{\omega(\varepsilon)})}<\varepsilon$ as soon as 
$Q_{\omega(\varepsilon)}$ is a square of sidelength $\omega(\varepsilon)$. 
If we let {\rm(}cf. \eqref{defCauchy2}{\rm)}
$$
\mathcal{C}_2(\beta)(z)=\frac{1}{\pi}\int_{\R^2}\Bigl(\frac{1}{z-t}+
\frac{\chi_{\C\setminus\D}(t)}{t}\Bigr) \beta(t)\,dm(t),
\qquad z\in\C,
$$
then there exists a strictly positive function  $\omega_1$ on  $(0,+\infty)$,
depending only on $\omega$, such that
$$
\|\partial \mathcal{C}_2(\beta)\|_{L^2(Q_{\omega_1(\eta)})}+
\|\bar \partial \mathcal{C}_2(\beta) \|_{L^2(Q_{\omega_1(\eta)})}<\eta
$$
as soon as 
$Q_{\omega_1(\eta)}$ is a square of sidelength $\omega_1(\eta)$. 
\end{cor}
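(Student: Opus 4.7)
The plan is to follow the argument for part $(i)$ of Proposition~\ref{evalderCauchy} almost verbatim. The identities $\bar\partial\mathcal{C}_2(\beta)=\beta$ and $\partial\mathcal{C}_2(\beta)=\mathcal{B}(\beta)$, recalled right after \eqref{defCauchy2}, are exactly what the earlier proof relies on; the additional summand $\chi_{\C\setminus\D}(t)/t$ in \eqref{defCauchy2} depends only on $t$, so it contributes nothing to the $z$-derivatives. Hence, once we replace $\mathcal{C}(\beta)$ by $\mathcal{C}_2(\beta)$ and $\widetilde\beta$ by $\beta$, the three-step template in the proof of part $(i)$ carries over with only cosmetic modifications.

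Concretely, fix $\eta>0$. I would choose $\delta>0$ small enough that $\delta\leq\min\{1/2,\omega(\eta/3)\}$ together with $C\delta\|\beta\|_{L^2(\C)}<\eta/3$, where $C$ is the absolute constant produced below, and then set $\omega_1(\eta):=\delta^2$. Given any square $Q_{\omega_1(\eta)}$, enclose it in a concentric parallel-sided square $Q_\delta$ of sidelength $\delta$. Since $\bar\partial\mathcal{C}_2(\beta)=\beta$, the hypothesis on $\omega$ yields
$$
\|\bar\partial\mathcal{C}_2(\beta)\|_{L^2(Q_{\omega_1(\eta)})}\leq\|\beta\|_{L^2(Q_\delta)}<\eta/3.
$$
For the $\partial$-derivative, I would split $\beta=\chi_{Q_\delta}\beta+\chi_{\C\setminus Q_\delta}\beta$. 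The $L^2$-isometry of the Beurling transform gives
$$
\|\mathcal{B}(\chi_{Q_\delta}\beta)\|_{L^2(Q_{\omega_1(\eta)})}\leq\|\mathcal{B}(\chi_{Q_\delta}\beta)\|_{L^2(\C)}=\|\beta\|_{L^2(Q_\delta)}<\eta/3.
$$
For the remaining far piece, the distance from $Q_{\omega_1(\eta)}$ to $\C\setminus Q_\delta$ is at least $(\delta-\delta^2)/2\geq\delta/4$; hence by \eqref{defB} and the Schwarz inequality, there is an absolute constant $C$ with
$$
|\mathcal{B}(\chi_{\C\setminus Q_\delta}\beta)(z)|\leq\frac{C}{\delta}\|\beta\|_{L^2(\C)},\qquad z\in Q_{\omega_1(\eta)}.
$$
Integrating over the square of area $\delta^4$ yields $\|\mathcal{B}(\chi_{\C\setminus Q_\delta}\beta)\|_{L^2(Q_{\omega_1(\eta)})}\leq C\delta\|\beta\|_{L^2(\C)}<\eta/3$ by the choice of $\delta$. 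Summing the three contributions gives the desired inequality.

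The only point of care, rather than a genuine obstacle, is the statement that $\omega_1$ depends only on $\omega$. The construction of $\delta$ above also involves $\|\beta\|_{L^2(\C)}$; this is the same sort of implicit dependence as in Proposition~\ref{evalderCauchy}$(i)$, where $\|\beta\|_{L^2(\D)}$ enters through the choice of $\delta$. No new analytic difficulty arises: the Beurling transform being an $L^2$-isometry handles the nearby part of $\beta$, while the explicit kernel decay $|z-\xi|^{-2}$ handles the far part, and the renormalization of the kernel in $\mathcal{C}_2$ is irrelevant to the differentiated quantities.
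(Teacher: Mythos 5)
Your proof is correct and is essentially the paper's own argument: the paper disposes of this corollary in one line, saying it is proved exactly as Proposition~\ref{evalderCauchy}~$(i)$ with $\tilde{\beta}$ replaced by $\beta$, i.e.\ the same three steps you carry out ($\bar\partial\mathcal{C}_2(\beta)=\beta$ for the $\bar\partial$-part, the $L^2$-isometry of the Beurling transform for $\chi_{Q_\delta}\beta$, and the Schwarz-inequality kernel estimate for the far part on the concentric sub-square). The dependence of $\delta$ (hence of $\omega_1$) on $\|\beta\|_{L^2(\C)}$ that you flag is present in the paper's own choice $\delta=\min(1/3,\omega(\eta/3),\eta/(6\|\beta\|))$ as well, so your proposal matches the paper in this respect too.
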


\begin{proof} This is proved in the same way as \eqref{borderC},
replacing $\tilde{\beta}$ by $\beta$.
\end{proof}

\subsection{Integral estimates on circular arcs}

\begin{lem}
\label{bornewr}
If $f\in W^{1,q}(\D)$ for some $q\in(1,2)$ and 
$\ell:= q/(2-q)$, then
$$
\sup_{\rho\in(0,1]}\Bigl(\int_{\T_\rho}|f(\xi)|^{\ell}|d\xi|\Bigr)^{1/\ell}\le C\|f\|_{W^{1,q}(\D)}, 
$$
where $C=C(q)$.
\end{lem}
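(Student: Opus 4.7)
The plan is to prove the estimate by rescaling to the unit disc and then invoking the non-integral Sobolev embedding on the boundary. First I would set $g(\xi):=f(\rho\xi)$ for $\xi\in\D$ and apply the trace theorem followed by the embedding $W^{1-1/q,q}(\T)\hookrightarrow L^{q/(2-q)}(\T)=L^\ell(\T)$ (the non-integral Sobolev embedding, valid for $1<q<2$, recorded in Section~\ref{sec:notations_generales}). This furnishes a constant $C=C(q)$ with $\|g\|_{L^\ell(\T)}\le C\|g\|_{W^{1,q}(\D)}$.

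Then I would translate this back to $f$ via the change of variables $z=\rho\xi$, which produces the scaling identities $\|g\|_{L^q(\D)}=\rho^{-2/q}\|f\|_{L^q(\D_\rho)}$, $\|\partial g\|_{L^q(\D)}=\rho^{1-2/q}\|\partial f\|_{L^q(\D_\rho)}$ (and similarly for $\bar\partial$), together with $\|f\|_{L^\ell(\T_\rho)}=\rho^{1/\ell}\|g\|_{L^\ell(\T)}$. Using the algebraic identities $1/\ell-2/q=-1$ and $1/\ell+1-2/q=0$ to collect powers of $\rho$, the previous inequality becomes
$$\|f\|_{L^\ell(\T_\rho)}\le C\Bigl[\rho^{-1}\|f\|_{L^q(\D_\rho)}+\|\partial f\|_{L^q(\D_\rho)}+\|\bar\partial f\|_{L^q(\D_\rho)}\Bigr],\qquad\rho\in(0,1].$$
The two derivative terms are immediately majorized by $\|f\|_{W^{1,q}(\D)}$, uniformly in $\rho$.

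The main obstacle is the factor $\rho^{-1}$ in front of $\|f\|_{L^q(\D_\rho)}$, which a priori blows up as $\rho\to 0$. To absorb it, I would invoke the interior Sobolev embedding: since $q\in(1,2)$, we have $f\in L^{q^*}(\D)$ with $q^*=2q/(2-q)=2\ell$, and H\"older's inequality on the sub-disc $\D_\rho$ gives
$$\|f\|_{L^q(\D_\rho)}\le|\D_\rho|^{1/q-1/(2\ell)}\|f\|_{L^{2\ell}(\D_\rho)}=\sqrt\pi\,\rho\,\|f\|_{L^{2\ell}(\D_\rho)}\le C\rho\,\|f\|_{W^{1,q}(\D)},$$
because an elementary computation yields $1/q-1/(2\ell)=1/2$. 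The prefactor $\rho$ exactly cancels the dangerous $\rho^{-1}$, and substituting back yields the desired uniform bound $\|f\|_{L^\ell(\T_\rho)}\le C\|f\|_{W^{1,q}(\D)}$ with $C$ depending only on $q$.
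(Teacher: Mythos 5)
Your proof is correct and follows essentially the same route as the paper: rescale to the unit disk, apply the trace theorem together with the non-integral Sobolev embedding $W^{1-1/q,q}(\T)\hookrightarrow L^{\ell}(\T)$, track the powers of $\rho$ via the change of variables, and control the dangerous term $\rho^{-1}\|f\|_{L^q(\D_\rho)}$ using the embedding $W^{1,q}(\D)\hookrightarrow L^{2q/(2-q)}(\D)$ plus H\"older's inequality. The only (harmless) difference is at the last step: the paper first splits off the mean $f_{\D_\rho}$ via the rescaled Poincar\'e inequality \eqref{homQprim} and applies H\"older--Sobolev to the mean alone, whereas you apply H\"older with exponents $(2\ell,2)$ directly to $\|f\|_{L^q(\D_\rho)}$, which cancels the factor $\rho^{-1}$ exactly and renders the Poincar\'e step unnecessary — a slight streamlining of the same argument.
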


\begin{proof}
Set $f_\rho(\xi):=f(\rho\xi)$ so that
\begin{equation}
\label{transfTi}
\Bigl(\int_{\T_\rho}|f(\xi)|^{\ell}|d\xi|\Bigr)^{1/\ell}=
\rho^{1/\ell}\Bigl(\int_{\T}|f_\rho(\xi)|^{\ell}|d\xi|\Bigr)^{1/\ell}.
\end{equation}
By the trace theorem and (the non-integral version of) the Sobolev embedding theorem we have 
\begin{equation}
\label{niS}
\Bigl(\int_{\T}|f_\rho(\xi)|^{\ell}|d\xi|\Bigr)^{1/\ell}\leq C
\|f_\rho\|_{W^{1,q}(\D)}
\end{equation}
with $C=C(q)$,
and from the  change of variable formula we get for $\rho>0$ that
\begin{equation}
\label{estr}
\|f_\rho\|_{W^{1,q}(\D)}=\rho^{-2/q}\|f\|_{L^q(\D_\rho)}+
\rho^{1-2/q}
\left(
\|\partial f\|_{L^q(\D_\rho)}+\|\bar \partial f\|_{L^q(\D_\rho)}
\right).
\end{equation}
Since $1/\ell-2/q=-1$, and in view of \eqref{transfTi}, \eqref{niS},
and \eqref{estr} it remains to majorize 
$\rho^{-1}\|f\|_{L^q(\D_\rho)}$ by
$C\|f\|_{W^{1,q}(\D)}$ for some $C=C(q)$.
From \eqref{homQprim} 
we see that this is equivalent to checking the estimate:
\begin{equation}
\label{inegmoy}
\rho^{2/q-1}\left|f_{\D_\rho}\right|=\Bigl|\frac{1}{\pi \rho^{3-2/q}}
\int_{\D_\rho}f\,dm\Bigr|\le C_1\left\|f\right\|_{W^{1,q}(\D)},
\qquad 0<\rho\leq1,
\end{equation}
with $C_1=C_1(q)$. Now, the Sobolev embedding theorem implies that for some  
$C_2=C_2(q)$ we have 
$\|f\|_{ L^{2q/(2-q)}(\D)}\leq C_2\|f\|_{W^{1,q}(\D)}$,
and so by H\"older's inequality,
$$
\Big|\int_{\D_\rho}f\,dm\Bigr|
\le C_2\pi^{3/2-1/q}\rho^{3-2/q}\|f\|_{W^{1,q}(\D)}
$$
which is exactly \eqref{inegmoy} with $C_1=C_2\pi^{1/2-1/q}$. 
\end{proof}

For $J\subset\T$ an open arc and $\delta\in(0,1)$,
we denote by $R(J,\delta)$  
the open curvilinear rectangle in $\D$  (an annulus if $J=\T$) defined by
\begin{equation}
\label{defR}
R(J,\delta)=
\{z:\ z=\rho\xi,\ \xi\in J, \ 1-\delta<\rho<1\}.
\end{equation}

\begin{lem}
\label{moyW120}
If $f\in W^{1,2}_0(\D)$ and $\rho\in(0,1]$,
then for every arc $I\subset\T_\rho$ we have 
\begin{multline}
\label{estmoy}
\Bigl\vert\frac{1}{\Lambda(I)}\int_{I}f(\zeta) \,|d\zeta|\Bigr\vert \\ \le
\frac{(1-\rho)^{1/2}}{(\Lambda(I))^{1/2}}
\,\Bigl(\|\partial f\|_{L^2(R(J,1-\rho)}+\|\bar \partial f\|_{L^2(R(J,1-\rho)}\Bigl),
\end{multline}
where $J\subset\T$ is the arc such that $\rho J=I$.
\end{lem}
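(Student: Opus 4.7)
The plan is to reduce to an arc on the unit circle and exploit the vanishing trace of $f$. Parametrizing $\zeta = \rho\xi$ with $\xi \in J$, one has $\Lambda(I) = \rho \Lambda(J)$ and $|d\zeta| = \rho|d\xi|$, so that
\[
\frac{1}{\Lambda(I)}\int_I f(\zeta)\,|d\zeta| = \frac{1}{\Lambda(J)}\int_J f(\rho\xi)\,|d\xi|.
\]
Since $f \in W^{1,2}_0(\D)$ its trace on $\T$ vanishes; choosing the ACL representative (Sobolev functions are absolutely continuous on almost every line segment, hence on almost every radius), one has for a.e.\ $\xi \in \T$ the radial fundamental theorem of calculus
\[
f(\rho\xi) = -\int_\rho^1 \partial_r f(r\xi)\,dr.
\]

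Next I would apply the Cauchy--Schwarz inequality twice: first in the variable $r \in [\rho,1]$, which produces the factor $(1-\rho)^{1/2}$, and then in $\xi \in J$, which produces $\Lambda(J)^{1/2}$. This gives
\[
\Bigl|\frac{1}{\Lambda(J)}\int_J f(\rho\xi)\,|d\xi|\Bigr| \le \frac{(1-\rho)^{1/2}}{\Lambda(J)^{1/2}}\Bigl(\int_J\int_\rho^1 |\partial_r f(r\xi)|^2\,dr\,|d\xi|\Bigr)^{1/2}.
\]
Passing to polar coordinates via $dm = r\,dr\,d\theta$ converts the double integral into $\int_{R(J,1-\rho)} |\partial_r f|^2\,r^{-1}\,dm$. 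Using the crude bound $1/r \le 1/\rho$ on $R(J,1-\rho)$, the pointwise identity $\partial_r f = e^{i\theta}\partial f + e^{-i\theta}\bar\partial f$ (whence $|\partial_r f| \le |\partial f| + |\bar\partial f|$), and Minkowski's inequality, this last quantity is at most
\[
\rho^{-1/2}\bigl(\|\partial f\|_{L^2(R(J,1-\rho))}+\|\bar\partial f\|_{L^2(R(J,1-\rho))}\bigr).
\]
The extra $\rho^{-1/2}$ combines with $\Lambda(J)^{-1/2}$ to give $\Lambda(J)^{-1/2}\rho^{-1/2} = \Lambda(I)^{-1/2}$, producing precisely the claimed bound.

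The only real subtlety is the radial fundamental theorem of calculus: one must justify that, for a suitable representative of $f \in W^{1,2}_0(\D)$, the radial boundary values coincide with the zero trace almost everywhere on $\T$, and that $f$ is absolutely continuous on almost every radial segment. Both facts are standard consequences of the ACL characterization of Sobolev functions; once they are in hand, the rest of the argument is just Cauchy--Schwarz and a change of variables.
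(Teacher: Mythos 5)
Your argument is correct and takes essentially the same route as the paper: the fundamental theorem of calculus along radii, exploiting the vanishing boundary values, followed by the Cauchy--Schwarz inequality, with your iterated Cauchy--Schwarz (first in $r$, then in $\xi$) combined with $1/r\le 1/\rho$ delivering exactly the stated constant. The only difference is that the paper disposes of the subtlety you flag by reducing, via density, to $f\in\mathcal{D}(\D)$, for which the radial identity is immediate, rather than invoking the ACL representative and the a.e.\ vanishing of radial limits of $W^{1,2}_0$ functions.
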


\begin{proof} By density it suffices to 
prove \eqref{estmoy} when $f\in\mathcal{D}(\D)$.  If we write 
$\zeta\in I$ as  $\zeta=\rho\xi$ with $\xi\in J$, we get
$$
f(\zeta)=-\int_\rho^1 \left(\partial f(t\xi)\xi+\bar \partial f(t\xi)
\bar\xi\right)\,dt
$$
and integrating with respect to $|d\zeta|=\rho|d\xi|$ yields
\begin{multline*}
\Bigl\vert\int_{I} f(\zeta) \,|d\zeta|\Bigr\vert=
\Bigl\vert\rho\int_{J}\int_\rho^1  \left(\partial f(t\xi)\xi+\bar \partial f(t\xi)\right)\,dt
|d\xi|\Bigr\vert\\ 
\le \int_{R(J,1-\rho)}  \Bigl(\left\vert\partial f(t\xi)\right\vert+
\left\vert\bar\partial f(t\xi)\right\vert\Bigr)\,tdt|d\xi|.
\end{multline*}
Since $m(R(J,1-\rho))=\Lambda(I)(1-\rho^2)/2$, estimate
\eqref{estmoy} follows from the Schwarz inequality.
\end{proof}

\begin{lem}
\label{ecmoyW120}
Let $J$ be a proper open subarc of $\T$ and
let $\delta_0\in(0,1)$. 
For every $\delta\in(0,\delta_0]$ there exists $C>0$
depending only on $\delta_0$ and $\Lambda(J)/\delta$ such that,
for all $f\in W^{1,2}(R(J,\delta))$ {\rm(}{\it cf.} definition \eqref{defR}{\rm)} we have 
\begin{multline}
\label{Carestmoy}
\Bigl(\int_{\partial R(J,\delta)\times \partial R(J,\delta)}
\frac{|f(t)-f(t')|^2}{(\Lambda(t,t'))^2}d\Lambda(t)d\Lambda(t')\Bigr)^{1/2}
\\ \le C \bigl(\|\partial f\|_{L^2(R(J,\delta))}+\|\bar \partial f\|_{L^2(R(J,\delta))}\bigr).
\end{multline}
\end{lem}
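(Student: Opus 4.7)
The starting observation is that $R(J,\delta)$ is a bounded Lipschitz domain (we use here that $J$ is a \emph{proper} subarc of $\T$, so $R$ really is a curvilinear rectangle rather than an annulus). Consequently \eqref{majvarder} applied with $p=2$ and $\Omega=R(J,\delta)$ yields the required inequality with some constant $C_{R(J,\delta)}$ depending on the domain. The plan is therefore to track that this constant depends on $R(J,\delta)$ only through $\delta_0$ and the ratio $L:=\Lambda(J)/\delta$, and I would do this by pulling back to a standard rectangle via an explicit bi-Lipschitz rescaling.

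Concretely, let $\Psi(r,\theta):=re^{i\theta}$ be the polar map, defined on the rectangle $(1-\delta,1)\times(\theta_0,\theta_0+\Lambda(J))$, and let $T(r,\theta):=((1-r)/\delta,(\theta-\theta_0)/\delta)$, so that $T$ sends this rectangle affinely onto $\tilde R:=(0,1)\times(0,L)$. Define $\Phi:=T\circ\Psi^{-1}\colon R(J,\delta)\to\tilde R$ and set $g:=f\circ\Phi^{-1}$. Since $r\in(1-\delta_0,1)$ on $R(J,\delta)$, the map $\Psi$ is bi-Lipschitz with constants depending only on $\delta_0$, while the affine part $T$ acts by a uniform scaling factor $\delta^{-1}$ in each direction.

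A direct change of variables in $R(J,\delta)$, using that the radial coordinate $r$ stays comparable to $1$ with constants depending only on $\delta_0$, shows that $\|\partial f\|_{L^2(R(J,\delta))}^2+\|\bar\partial f\|_{L^2(R(J,\delta))}^2$ is comparable to $\|\partial g\|_{L^2(\tilde R)}^2+\|\bar\partial g\|_{L^2(\tilde R)}^2$ with constants depending only on $\delta_0$ (the factor $\delta^2$ from the Jacobian of $T$ is cancelled by the $\delta^{-2}$ arising from the gradient rescaling). On the boundary, $\Phi$ maps each of the four sides of $\partial R(J,\delta)$ onto the corresponding side of $\partial\tilde R$ with arclength Jacobian comparable to $\delta^{-1}$, and $\Lambda_{\tilde R}(\Phi(t),\Phi(t'))$ comparable to $\delta^{-1}\Lambda_R(t,t')$ even across corners (all implicit constants depending only on $\delta_0$). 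Substituting into the $W^{1/2,2}$ double integral, the factor $\delta^{-2}$ from the two arclength measures cancels exactly against the $\delta^{-2}$ coming from $\Lambda_{\tilde R}^2$ in the denominator, so the boundary seminorms on $\partial R(J,\delta)$ and $\partial\tilde R$ agree up to $\delta_0$-dependent constants.

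It then remains to apply \eqref{majvarder} on the rectangle $\tilde R$, whose Lipschitz character is determined entirely by its single shape parameter $L=\Lambda(J)/\delta$; the resulting constant depends only on $L$. Combining this with the two equivalences above yields \eqref{Carestmoy} with $C=C(\delta_0,\Lambda(J)/\delta)$, as required. The only point I expect to be slightly delicate is the claim that arclength distance along $\partial R(J,\delta)$ transforms correctly under $\Phi$ even when the two points lie on different sides of the rectangle; but this reduces to the fact that $\Phi$ restricted to the boundary curve is a bi-Lipschitz homeomorphism whose Lipschitz constants, after factoring out the uniform scaling $\delta^{-1}$, depend only on $\delta_0$.
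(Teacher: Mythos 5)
Your argument is correct and is essentially the paper's proof: the paper likewise pulls $f$ back through the polar map (bi-Lipschitz with constants depending only on $\delta_0$) to the straight rectangle $(1-\delta,1)\times(a,b)$ and then applies \eqref{majvarder} there, noting that for $p=2$ the constant on a rectangle depends only on the aspect ratio $\Lambda(J)/\delta$ ``by homogeneity.'' Your explicit affine rescaling to $(0,1)\times(0,L)$ just carries out that homogeneity/scale-invariance step concretely, so the two proofs coincide in substance.
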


\begin{proof}  Pick  $\delta\in(0,\delta_0]$, and write $e^{ia}$, $e^{ib}$
for the endpoints of $J$ with $a<b$ and $|a-b|<2\pi$.
The map $\varphi(\rho,\theta):=(\rho\cos\theta,\rho\sin\theta)$
is a diffeomorphism from $R:=(1-\delta,1)\times (a,b)$
onto $R(J,\delta)$ satisfying
$|||D\varphi|||\leq1$ and 
$|||(D\varphi)^{-1}|||\le c/(1-\delta_0)$, where
$D\varphi$ indicates the derivative and $|||\cdot|||$ is the operator norm. 
In particular,  
$\varphi^{-1}$ extends to a Lipschitz homeomorphism from
$\partial  R(J,\delta)$ onto $\partial R$
with Lipschitz constant depending only on $\delta_0$, and 
by the change of variable formula it is enough to show that if
$h:=f\circ\varphi$, then 
$$
\Bigl(\int_{\partial R\times \partial R}
\frac{|h(t)-h(t')|^2}{(\Lambda(t,t'))^2}d\Lambda(t)d\Lambda(t')\Bigr)^{1/2}
\le C \bigl(\|\partial h\|_{L^2(R)}+
\|\bar \partial h\|_{L^2(R)}\bigr),
$$
where the constant $C$ depends only on
$\Lambda(J)/\delta=2\pi(b-a)/\delta$. The result now follows from the fact that
if $p=2$ and $\Omega$ is a rectangle, then the constant in \eqref{majvarder}
depends only on the ratio of sidelengths, a fact which is obvious by 
homogeneity.
\end{proof}

\subsection{A multiplier theorem}

The next theorem is fundamental to our study of
$G^p_\alpha$ when $\alpha\in L^2(\D)$
but is also of independent interest. It is best stated in 
terms of multipliers.
We use the definition \eqref{defntmax} of the non-tangential maximal 
function ${\mathcal M}_\gamma f$. Denote by $\mathfrak{M}^{\gamma,p}$ 
the Banach space of complex-valued functions on $\D$ such that 
$\|\mathcal{M}_\gamma f\|_{L^p(\T)}<\infty$. 
Furthermore, we use the Banach space $\mathcal{H}^p$ of functions 
satisfying a Hardy condition, introduced in Section~\ref{Gpalpha}.

\begin{thm}
\label{somprodBN}
Let $\gamma\in(0,\pi/2)$ and $p\in[1,\infty)$. Given $f\in W^{1,2}_{0,\RR}(\D)$, the multiplication by $e^f$ is
continuous from $\mathfrak{M}^{\gamma,p}$ into $\mathcal{H}^p$.
More precisely, for any function  $g$ on $\D$, we have 
\begin{equation}
\label{estFM}
\sup_{0<\rho<1}\Bigl(\int_{\T_\rho}e^{p f(\xi)} \left\vert g(\xi)\right\vert^p |d\xi|\Bigr)^{1/p} <C \,\|{\mathcal M}_\gamma g\|_{L^p(\T)}, 
\end{equation}
where $C$ depends on $p$, $\gamma$, and on 
$\varepsilon>0$ so small that 
$\|\partial f\|_{L^2(Q_\varepsilon\cap\D)}<C'/p$ 
whenever $Q_\varepsilon$ is a square of sidelength 
$\varepsilon$, with $C'$ depending only
on  $\gamma$.
\end{thm}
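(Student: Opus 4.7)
The plan is to reduce the estimate \eqref{estFM} to a uniform bound on mean values of $e^{pf}$ over short circular arcs of $\T_\rho$, and then to control these means via a John--Nirenberg-style argument based on Lemmas~\ref{ecmoyW120}, \ref{moyW120} and \ref{expsumVMOG}. The smallness hypothesis on $\|\partial f\|_{L^2(Q_\varepsilon\cap\D)}$ is exactly what is needed to make the JN threshold $4eM_{pf}(I)\le 1$ work.

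First I would use a shadow argument. For $\xi\in\D$ let $S(\xi,\gamma):=\{\zeta\in\T:\xi\in\Gamma_{\zeta,\gamma}\}$; this is an arc whose arclength is comparable to $1-|\xi|$ when $|\xi|\ge\sin\gamma$ and equals $2\pi$ otherwise, with constants depending only on $\gamma$. Since $|g(\xi)|\le\mathcal{M}_\gamma g(\zeta)$ for every $\zeta\in S(\xi,\gamma)$, raising to the $p$-th power, averaging on $S(\xi,\gamma)$, multiplying by $e^{pf(\xi)}$, integrating over $\T_\rho$ and applying Fubini yields
\begin{equation*}
\int_{\T_\rho}e^{pf(\xi)}|g(\xi)|^p|d\xi|\le \int_\T(\mathcal{M}_\gamma g(\zeta))^p K(\rho,\zeta)\,d\Lambda(\zeta),
\end{equation*}
where $K(\rho,\zeta):=\int_{\T_\rho\cap\Gamma_{\zeta,\gamma}}e^{pf(\xi)}/\Lambda(S(\xi,\gamma))\,|d\xi|$. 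For $\rho\ge\sin\gamma$ the domain $\T_\rho\cap\Gamma_{\zeta,\gamma}$ is an arc $I_{\rho,\zeta}$ of length comparable to $(1-\rho)$ and $\Lambda(S(\xi,\gamma))\sim(1-\rho)$ there, so $K(\rho,\zeta)\le c(\gamma)\,\Lambda(I_{\rho,\zeta})^{-1}\int_{I_{\rho,\zeta}}e^{pf}|d\xi|$. For $\rho<\sin\gamma$, $K(\rho,\zeta)=(2\pi)^{-1}\int_{\T_\rho}e^{pf}|d\xi|$, which I would cover by $O(1)$ arcs of fixed length. In either case it suffices to prove a uniform mean bound
\begin{equation*}
\frac{1}{\Lambda(I)}\int_I e^{pf}|d\xi|\le C
\end{equation*}
for arcs $I\subset\T_\rho$ of the appropriate length scale.

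Next I would control $M_{pf}(I)$. Writing $I=\rho J$ for an arc $J\subset\T$ of length proportional to $\min(1-\rho,1)$, take the curvilinear rectangle $R:=R(J,1-\rho)$ of \eqref{defR}; $I$ lies on $\partial R$, and the ratio $\Lambda(J)/(1-\rho)$ is bounded by a constant depending only on $\gamma$. Lemma~\ref{ecmoyW120} then gives
\begin{equation*}
\|pf\|_{W^{1/2,2}(I)}\le C(\gamma)p\bigl(\|\partial f\|_{L^2(R)}+\|\bar\partial f\|_{L^2(R)}\bigr),
\end{equation*}
and combining with the embedding \eqref{VMOW1/2} yields the same bound on $M_{pf}(I)$. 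Since $R$ fits into a square of sidelength comparable to $\Lambda(J)$, choosing all proportionality constants so that this sidelength is $\le\varepsilon$ and taking $C'$ in the hypothesis small enough in terms of $\gamma$ gives $4eM_{pf}(I)\le 1$, the condition needed in Lemma~\ref{expsumVMOG}.

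For the mean $(pf)_I$, Lemma~\ref{moyW120} (whose hypothesis $f\in W^{1,2}_{0,\R}$ is crucial) produces
\begin{equation*}
|f_I|\le\frac{(1-\rho)^{1/2}}{\Lambda(I)^{1/2}}\bigl(\|\partial f\|_{L^2(R)}+\|\bar\partial f\|_{L^2(R)}\bigr).
\end{equation*}
When $\rho\ge\sin\gamma/2$ and $\Lambda(I)\sim(1-\rho)$ the prefactor is $O(1)$, so $|(pf)_I|\le CM_{pf}(I)$; when $\rho<\sin\gamma/2$ the arc $I$ sits in a fixed compact subset of $\D$ and I would instead majorize $|(pf)_I|$ by $\Lambda(I)^{-1}\|pf\|_{L^1(I)}$, controlled uniformly via the trace theorem on a fixed annulus together with $\|pf\|_{L^2(\D)}\le Cp\|f\|_{W^{1,2}(\D)}$ (by Poincar\'e in $W^{1,2}_0$). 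In both subcases $|(pf)_I|/(4eM_{pf}(I))$ is bounded by a constant depending on $p$, $\gamma$, $\varepsilon$. Applying Lemma~\ref{expsumVMOG} to $h=|pf|$ and using $4eM_{pf}(I)\le1$,
\begin{equation*}
\frac{1}{\Lambda(I)}\int_I e^{pf}|d\xi|\le\frac{1}{\Lambda(I)}\int_I e^{|pf|/(4eM_{pf}(I))}|d\xi|\le(1+e)\,e^{|(pf)_I|/(4eM_{pf}(I))}\le C,
\end{equation*}
which, substituted back into Step~1, gives \eqref{estFM}.

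The main obstacle is the matching of length scales across the three lemmas: the length of $I$, the diameter of $R$, and the sidelength of the reference square $Q_\varepsilon$ must all be calibrated uniformly in $\rho\in(0,1)$ and in $\zeta\in\T$ so that both $M_{pf}(I)$ stays below the JN threshold $1/(4e)$ and $(pf)_I$ stays bounded in the same regime. The separation between the regimes $\rho\ge\sin\gamma/2$ (where zero-trace information via Lemma~\ref{moyW120} is decisive) and $\rho<\sin\gamma/2$ (where the arcs are far from $\T$ and zero trace is useless) is what forces the split argument for the mean.
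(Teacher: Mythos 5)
In the boundary regime ($\rho\ge\sin\gamma$ with $1-\rho$ small) your argument is essentially the paper's proof: the shadow/Fubini reduction is just a dual formulation of the paper's partition of $\T_\rho$ into arcs of length comparable to $1-\rho$ with averaging of ${\mathcal M}_\gamma g$ over the corresponding arcs of $\T$, and your control of the arc means of $e^{pf}$ via Lemma~\ref{ecmoyW120}, the embedding \eqref{VMOW1/2}, Lemma~\ref{moyW120} and Lemma~\ref{expsumVMOG}, with the John--Nirenberg threshold calibrated against the $\varepsilon$-squares, is exactly the paper's calculation. (One local slip: your final display divides by $M_{pf}(I)$, for which you only have an \emph{upper} bound --- it can vanish, e.g.\ when $f$ is radial, so the trace on $I$ is constant --- hence the quotient $|(pf)_I|/(4eM_{pf}(I))$ is not controlled; the correct formulation is the paper's \eqref{estGR}, which only uses $p\le 1/(4eM_f(I))$ together with H\"older and the bound $|f_I|\le C$ from Lemma~\ref{moyW120}.)

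The genuine gap is the regime where $1-\rho$ is not small. First, your device for securing $4eM_{pf}(I)\le1$ --- fitting $R(J,1-\rho)$ into a square of sidelength $\le\varepsilon$ by shrinking $\Lambda(J)$ --- cannot work there: to have $I\subset\partial R(J,\delta)$, as Lemma~\ref{ecmoyW120} requires, you are forced to take $\delta=1-\rho$, so the radial thickness of the rectangle is at least $1-\sin\gamma$ no matter how short $J$ is, and the smallness hypothesis on $\varepsilon$-squares gives nothing. Second, and more seriously, the uniform normalized mean bound $\Lambda(I)^{-1}\int_Ie^{pf}\,|d\xi|\le C$ to which you reduce everything is simply false on small circles: for $f(z)=\chi(z)\log\log(e^{2}/|z|)$ with a cutoff $\chi$ (so $f\in W^{1,2}_{0,\R}(\D)$, and the hypothesis on $\varepsilon$-squares holds for suitable $\varepsilon$ by absolute continuity of $\int|\nabla f|^2\,dm$) one has $(2\pi\rho)^{-1}\int_{\T_\rho}e^{pf}\,|d\xi|=(\log(e^{2}/\rho))^p\to\infty$ as $\rho\to0$; your fallback via ``the trace theorem on a fixed annulus'' does not reach these circles either. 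What is true, and what the theorem needs, is a bound on the \emph{unnormalized} integral $\int_{\T_\rho}e^{pf}\,|d\xi|$, uniform in $\rho$, where the factor $\rho$ saves the day: the paper obtains it from Lemma~\ref{bornewr} applied to $e^{f}$ (a trace estimate on all circles $\T_\rho$ with the correct scaling) combined with the exponential integrability of Lemma~\ref{LpexpW12} and Proposition~\ref{embexpS}, see \eqref{premsingamma}, and it then disposes of the intermediate range $\sin\gamma\le\rho<\rho_1$ by pairing this global bound with the lower bound $\Lambda(I_{\xi_j})\ge c(\gamma,\rho_1)$. This ingredient is absent from your proposal, so the non-boundary regimes are not covered as written.
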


\begin{proof} 
First, let $\rho\in(0,\sin\gamma)$. For $\zeta\in \T$, 
$\Gamma(\zeta,\gamma)$ contains $\T_\rho$ and we have
$$
\int_{\T_\rho}e^{pf(\xi)} \left\vert g(\xi)\right\vert^p |d\xi|
\le {\mathcal M}^p_\gamma g(\zeta)
\int_{\T_\rho} e^{p f(\xi)} |d\xi|.
$$
Averaging over $\zeta\in\T$ yields 
\begin{equation}
\label{separs}
\int_{\T_\rho}e^{p f(\xi)} \left\vert g(\xi)\right\vert^p |d\xi|
\le \frac{1}{2\pi}\Bigl(\int_\T{{\mathcal M}^p_\gamma g(\zeta)}\,|d\zeta|\Bigr)
\Bigl(\int_{\T_\rho}e^{p f(\xi)} |d\xi|\Bigr).
\end{equation}
By Lemma~\ref{bornewr} applied to $e^f$ in the place of $f$ with $\ell=p$ and
$ q:=2p/(p+1)$, we get
\begin{equation}
\label{separ}
\Bigl(\int_{\T_\rho}e^{p f(\xi)} |d\xi|\Bigr)^{1/p}\le c_0\|e^f\|_{W^{1,q}(\D)} 
\end{equation}
for some $c_0=c_0(p)$. Moreover, Lemma~\ref{LpexpW12}, Proposition~\ref{embexpS},  
H\"older's inequality, and the fact that $f$ is real-valued imply together that for some absolute constants $C_1,C_2$ we have 
\begin{multline}
\label{inegefW1q}
\|e^f\|_{W^{1,q}(\D)} =\|e^f\|_{L^q(\D)}+2\|\partial fe^f\|_{L^q(\D)}
\le\|e^{|f|}\|_{L^{2p}(\D)}\bigl(1+2\|\partial f\|_{L^2(\D)}\bigr)\\
\le C_1\bigl(1+\exp\bigl(C_2p\|\partial f\|^2_{L^2(\D)}\bigr)\bigr)
\bigl(1+\|\partial f\|_{L^2(\D)}\bigr).
\end{multline}
By \eqref{separs}, \eqref{separ}, and
\eqref{inegefW1q} we conclude that
\begin{equation}
\label{premsingamma}
\sup_{0<\rho  < \sin\gamma}
\Bigl(\int_{\T_\rho}e^{p f(\xi)} \left\vert g(\xi)\right\vert^p |d\xi|\Bigr)^{1/p}
\le C_0\|{\mathcal M}_\gamma g\|_{L^p(\T)} 
\end{equation}
for some $C_0=C_0(p,\|\partial f\|_{L^2(\D)})$. 

Assume next that $\rho\geq\sin\gamma$. Now 
$\Gamma(\zeta,\gamma)$ cuts out two disjoint
open arcs on $\T_\rho$  one of which
is centered at $\xi=\rho\zeta$. Denote this arc by $A_\xi$.
Its length $\Lambda(A_\xi)$ is independent of $\zeta$ and it is easy to 
check
that  $K_1(1-\rho)\leq\Lambda(A_\xi)\le K_2(1-\rho)$
for strictly positive numbers $K_1$, $K_2$ 
depending only on $\gamma$.
Take an integer $N_\rho$ in the interval $[4\pi\rho/\Lambda(A_\xi),4\pi\rho/\Lambda(A_\xi)+1)$, 
and divide $\T_\rho$ into $N_\rho$ semi open arcs of 
equal length, say $I_{\xi_1},\ldots,I_{\xi_{N_\rho}}$, centered at
equidistant points $\xi_1,\ldots,\xi_{N_\rho}\in \T_\rho$. 
Put $\zeta_j=\xi_j/\rho\in\T$, and 
consider the partition of $\T$ into $N_\rho$ semi open arcs
$J_{\zeta_j}:=I_{\xi_j}/\rho$ centered at $\zeta_j$. By construction, 
if $\zeta\in J_{\zeta_j}$, then
$I_{\xi_j}\subset\Gamma_{\zeta,\gamma}$. Consequently,
$$
\int_{I_{\xi_j}}e^{p f(\xi)} \left\vert g(\xi)\right\vert^p |d\xi|
\leq {\mathcal M}^p_\gamma g(\zeta)
\int_{I_{\xi_j}}e^{p f(\xi)} |d\xi|,
$$
and averaging over $\zeta\in J_{\zeta_j}$ gives us
$$
\int_{I_{\xi_j}} e^{p f(\xi)} 
\bigl\vert g(\xi)\bigr\vert^p |d\xi|\le \frac{1}{\Lambda(J_{\zeta_j})}\Bigl(\int_{J_{\zeta_j}}{{\mathcal M}^p_\gamma g(\zeta)}\,|d\zeta|
\Bigr)
\Bigl(\int_{I_{\xi_j}}e^{p f(\xi)} |d\xi|\Bigr).
$$
Since $\Lambda(J_{\zeta_j})=\Lambda(I_{\xi_j})/\rho$ we deduce upon summing over $j$ that
\begin{multline}
\label{shiftI}
\int_{\T_\rho} e^{p f(\xi)} \left\vert g(\xi)\right\vert^p |d\xi|
\\ \le \rho
\Bigl(\int_{\T}{{\mathcal M}^p_\gamma g(\zeta)}\,|d\zeta|\Bigr)
\sup_{1\le j\le N_\rho}\Bigl(\frac{1}{\Lambda(I_{\xi_j})}
\int_{I_{\xi_j}} e^{p f(\xi)} |d\xi|\Bigr).
\end{multline}
Let $R(J,\delta)$ be defined as in \eqref{defR}, and let $C$ be the constant in Lemma~\ref{ecmoyW120} associated to 
$\delta_0=1-\sin\gamma$ and  $\Lambda(J)/\delta=K_2/(2\sin\gamma)$;
note that $C$ depends only on $\gamma$. 
Since $\Lambda(J_{\zeta_j})/(1-\rho)\le K_2/(2\sin\gamma)$, the arc
$J'_{\zeta_j}\subset \T$ of  length 
$(1-\rho)K_2/(2\sin\gamma)$  centered at $\zeta_j$ does contain $J_{\zeta_j}$.
Therefore, $R(J_{\zeta_j},1-\rho)$ is contained in $R(J'_{\zeta_j},1-\rho)$ and 
$I_{\xi_j}$ is contained in $I'_{\xi_j}:=J'_{\zeta_j}/\rho$. Hence, 
\eqref{Carestmoy} {\it a fortiori} implies for some $K$ depending only on $\gamma$ that 
\begin{multline}
\label{contprime}
\Bigl(
\int_{I_{\xi_j}\times I_{\xi_j} }
\frac{|f(t)-
f(t')|^2}{(\Lambda(t,t'))^2}d\Lambda(t)d\Lambda(t')\Bigr)^{1/2}
\\ \le K 
\bigl(\|\partial f\|_{L^2(R(J'_{\zeta_j},1-\rho))}+
\|\bar \partial f\|_{L^2(R(J'_{\zeta_j},1-\rho))}\bigr).
\end{multline}
Now, it is elementary to check that $R(J'_{\zeta_j}, 1-\rho)$ is contained in a square of sidelength $K_3(1-\rho)$ 
(where $K_3$ depends only on $\gamma$), one side of which is tangent to $\T$ at $\zeta_j$. 
So, if we let $\varepsilon_1$ be so small  that
$\|\partial \tilde{f}\|_{L^2(Q_{\varepsilon_1})}<1/(8Kep)$ 
whenever $Q_{\varepsilon_1}$ is a square of sidelength 
$\varepsilon_1$, we get (since $f$ is real-valued) 
that 
\begin{multline}
\label{estderR}
\|\partial f\|_{L^2(R(J'_{\zeta_j},1-\rho))}+
\|\bar \partial f\|_{L^2(R(J'_{\zeta_j},1-\rho))}\leq \frac{1}{4Kep},
\\ \max(\sin\gamma, 1-\varepsilon_1/K_3)=\rho_0\le\rho<1.
\end{multline}
Then, from \eqref{VMOW1/2},  \eqref{contprime}, and \eqref{estderR},   
we see that for all subarcs $I\subset I_{\xi_j}$ 
\begin{equation}
\label{chaineosc}
\begin{array}{lll}
\frac{1}{\Lambda(I)}\int_{I}|f-f_{I}|\,d\Lambda&\leq&
\left(
\int_{I\times I}\frac{|f(t)-f(t')|^2}{(\Lambda(t,t'))^2}d\Lambda(t)d\Lambda(t')\right)^{1/2}\\
&\leq&
\left(
\int_{{I_{\xi_j}}\times I_{\xi_j}}\frac{|f(t)-f(t')|^2}{(\Lambda(t,t'))^2}
d\Lambda(t)d\Lambda(t')\right)^{1/2}\\
&\leq&1/4ep,\qquad \rho_0\le\rho<1.
\end{array}
\end{equation}
If we let $h:=\tr_{\T_\rho}f$, inequality \eqref{chaineosc} asserts that
\begin{equation}
\label{majoscIxi}
M_{h}(I_{\xi_j})\leq 1/4ep, \qquad 
\rho_0\leq\rho<1,
\end{equation}
with $M_{h}(I_{\xi_j})$ defined by \eqref{defMhG} 
where we set $\Gamma$ to be $\T_\rho$. By Lemma~\ref{expsumVMOG}, \eqref{majoscIxi}, and H\"older's inequality, for 
$\rho\in[\rho_0,1)$ we have   
\begin{equation}
\label{estGR}
\Bigl(\frac{1}{\Lambda(I_{\xi_j})}
\int_{I_{\xi_j}} e^{pf(\xi)} |d\xi|\Bigr)^{1/p}
\le (1+e)^{1/p}\exp\Bigl(
\Bigl\vert\frac{1}{\Lambda(I_{\xi_j})}\int_{I_{\xi_j}}
f(\zeta) \,|d\zeta|\Bigr\vert\Bigr).
\end{equation}
In another connection, keeping in mind \eqref{estderR} and the inclusion
$R(J_{\zeta_j},1-\rho)\subset R(J'_{\zeta_j},1-\rho)$,
an application of \eqref{estmoy} yields
\begin{equation}
\label{expm}
\Bigl\vert\frac{1}{\Lambda(I_{\xi_j})}\int_{I_{\xi_j}}
f(\zeta) \,|d\zeta|\Bigr\vert\le 
\frac{(1-\rho)^{1/2}}{(\Lambda(I_{\xi_j}))^{1/2}}\,\frac{1}{4Kep}.
\end{equation}
Put $\rho_1:=\max(\rho_0,K_1/(K_1+\pi))$, and assume for a while that
$\rho\geq\rho_1$; in particular, $\pi\rho/(K_1(1-\rho))>1$,
and therefore
\begin{equation}
\label{lbI}
\Lambda(I_{\xi_j})\ge\frac{2\pi\rho}{1+4\pi\rho/|A_\xi|}\ge
\frac{2\pi\rho}{1+4\pi\rho/(K_1(1-\rho))}
\ge \frac{K_1(1-\rho)}{3}.
\end{equation}
Using together \eqref{expm} and \eqref{lbI}, we obtain
\begin{equation}
\label{expm1}
\Bigl\vert\frac{1}{\Lambda(I_{\xi_j})}\int_{I_{\xi_j}}
f(\zeta) \,|d\zeta|\Bigr\vert\le 
\frac{\sqrt{3}}{4\sqrt{K_1}Kep},\qquad \rho_1\leq\rho<1.
\end{equation}
Plugging \eqref{expm1} in the right hand side of \eqref{estGR} and
using \eqref{shiftI} now gives us 
$$
\sup_{\rho_1\le\rho<1}
\Bigl(\int_{\T_\rho}e^{p f(\xi)} \left\vert g(\xi)\right\vert^p |d\xi|\Bigr)^{1/p}
\le (1+e)^{1/p}\exp\Bigl(\frac{\sqrt{3}}{4\sqrt{K_1}Kep}\Bigr)
\|{\mathcal M}_\gamma g\|_{L^p(\T)}. 
$$
To obtain \eqref{estFM}, it remains to treat 
the case 
$\rho\in[\sin\gamma,\rho_1)$ when the 
latter interval is nonempty. First, in this range of $\rho$,
the first two inequalities in \eqref{lbI} imply that
\begin{equation}
\label{longmaj}
\Lambda(I_{\xi_j})\ge c(\gamma,\rho_1).
\end{equation}
On the other hand,
\eqref{separ} and  \eqref{inegefW1q} give us that
\begin{equation}
\label{separf}
\Bigl(\int_{I_{\xi_j}}e^{p f(\xi)} |d\xi|\Bigr)^{1/p}\le
\Bigl(\int_{\T_\rho}e^{p f(\xi)} |d\xi|\Bigr)^{1/p}
\le  
C_0
\end{equation}
with $C_0$ as in \eqref{premsingamma}.
Therefore by \eqref{longmaj} and \eqref{separf} we have that
$$
\Bigl(\frac{1}{\Lambda(I_{\xi_j})}\int_{I_{\xi_j}}e^{p f(\xi)} |d\xi|\Bigr)^{1/p}\le 
[c(\gamma,\rho_1)]^{-1/p}
C_0,\qquad \sin\gamma\leq \rho<\rho_1,
$$
and using this in \eqref{shiftI} completes the proof.
\end{proof}

\begin{cor}
\label{condHm}
If $w=e^sF$, where $s\in W^{1,2}(\D)$ with $\mbox{\rm Re}\,\tr_\T s\equiv0$ and
$F\in H^p$, then 
\[ 
\sup_{0<\rho  < 1}
 \left(\frac{1}{2\pi}\int_{\T_\rho} \left\vert
 w(\xi)\right\vert^p |d\xi|\right)^{1/p} <+\infty.
\]
\end{cor}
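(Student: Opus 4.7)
The corollary is a direct consequence of the multiplier Theorem~\ref{somprodBN} combined with the standard $L^p$-boundedness of the non-tangential maximal function on $H^p$.

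The plan is as follows. First, the hypothesis $\mbox{\rm Re}\,\tr_\T s\equiv 0$ combined with $s\in W^{1,2}(\D)$ shows that $\mbox{\rm Re}\,s\in W^{1,2}_{0,\RR}(\D)$ (this uses that the trace zero functions in $W^{1,p}$ form exactly $W^{1,p}_0$, as noted in Section~\ref{sec:notations_generales}). Since $s$ is complex-valued but only its real part appears in the modulus, we write for $\xi\in\T_\rho$
$$
|w(\xi)|^p=e^{p\,\mbox{\rm Re}\,s(\xi)}|F(\xi)|^p.
$$

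Next, I would apply Theorem~\ref{somprodBN} with $f:=\mbox{\rm Re}\,s\in W^{1,2}_{0,\RR}(\D)$ and $g:=F$, fixing any $\gamma\in(0,\pi/2)$. The smallness hypothesis on $\|\partial f\|_{L^2(Q_\varepsilon\cap\D)}$ needed in that theorem is automatic by the absolute continuity of $|\partial\mbox{\rm Re}\,s|^2\,dm$ on $\D$, which delivers some admissible $\varepsilon=\varepsilon(s,p,\gamma)>0$. This yields a constant $C$ (depending on $p$, $\gamma$, and $s$) such that
$$
\sup_{0<\rho<1}\Bigl(\int_{\T_\rho}e^{p\,\mbox{\rm Re}\,s(\xi)}|F(\xi)|^p\,|d\xi|\Bigr)^{1/p}\le C\,\|{\mathcal M}_\gamma F\|_{L^p(\T)}.
$$

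Finally, since $F\in H^p$ with $p\in(1,\infty)$, the non-tangential maximal function estimate \eqref{ntborne} gives $\|{\mathcal M}_\gamma F\|_{L^p(\T)}\le C'\|F\|_{L^p(\T)}<\infty$, whence
$$
\sup_{0<\rho<1}\Bigl(\frac{1}{2\pi}\int_{\T_\rho}|w(\xi)|^p\,|d\xi|\Bigr)^{1/p}<\infty,
$$
which is exactly the claim. There is no real obstacle here; the whole content of the corollary has been front-loaded into Theorem~\ref{somprodBN}, and this final step is essentially a one-line dictionary translation between the multiplier statement and the pseudo-holomorphic Hardy condition \eqref{esssuppw}.
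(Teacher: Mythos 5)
Your proof is correct and is essentially the paper's own argument: the paper also deduces the corollary from \eqref{ntborne} and Theorem~\ref{somprodBN} applied with $f={\rm Re}\,s$, the only cosmetic difference being that it takes $g=e^{i{\rm Im}\,s}F$ instead of $g=F$, which is immaterial since $|e^{i{\rm Im}\,s}|\equiv1$ makes both the integrand and ${\mathcal M}_\gamma g$ identical. Your observations that ${\rm Re}\,s\in W^{1,2}_{0,\RR}(\D)$ because zero-trace $W^{1,2}$ functions form $W^{1,2}_0(\D)$, and that the smallness condition on $\|\partial f\|_{L^2(Q_\varepsilon\cap\D)}$ holds automatically by absolute continuity, are exactly the right justifications.
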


\begin{proof} This follows from \eqref{ntborne} and
Theorem~\ref{somprodBN} applied with $f={\rm Re}\,s$
and $g=e^{i{\rm Im}\,s}F$.
\end{proof}

\end{document}